\definecolor{LinkColor}{rgb}{0,0,1}
\definecolor{LinkColor2}{rgb}{0,0.5,0}
\definecolor{lbcolor}{rgb}{0.85,0.85,0.85}
\definecolor{FrameColor}{rgb}{0.85,0.85,0.85}
\numberwithin{equation}{section}
\theoremstyle{plain}
\newtheorem{thm}{Theorem}[section]
\newtheorem{lem}[thm]{Lemma}
\newtheorem{defn}[thm]{Definition}
\theoremstyle{definition}
\newtheorem{rem}[thm]{Remark}
\renewenvironment{proof}[1][\proofname]{%
	\par\pushQED{\qed}\normalfont%
	\topsep6\p@\@plus6\p@\relax
	\trivlist\item[\hskip\labelsep\bfseries#1\@addpunct{.}]%
	\ignorespaces
}{%
	\popQED\endtrivlist\@endpefalse
}
\renewcommand\paragraph{\@startsection{paragraph}{4}{\z@}%
	{1ex \@plus1ex \@minus.2ex}%
	{-1em}%
	{\normalfont\normalsize\bfseries}}
\renewcommand\subparagraph{\@startsection{paragraph}{4}{\z@}%
	{1ex \@plus1ex \@minus.2ex}%
	{-1em}%
	{\normalfont\normalsize\itshape}}
\newcommand{\N}{\mathbb{N}}
\newcommand{\R}{\mathbb{R}}
\newcommand{\LL}{\mathcal{L}}
\newcommand{\NN}{\mathcal{N}}
\renewcommand{\SS}{\mathcal{S}^L}
\newcommand{\SD}{\mathcal{S}^0}
\newcommand{\VV}{\mathcal{V}}
\newcommand{\HH}{\mathcal{H}}
\newcommand{\HHBM}{\HH_{\beta,m}^k}
\newcommand{\HHBI}{\HH_{\beta,0}^1}
\newcommand{\HHBo}{\HH_{\beta,0}^0}
\newcommand{\HHBMo}{\HH_{\beta,m}^0}
\newcommand{\DD}{\mathcal{D}}
\newcommand{\DDBI}{\DD_{\beta}^1}
\newcommand{\LINF}{\mathcal{L}^\infty}
\newcommand{\IN}{\mathcal{I}_n}
\newcommand{\INF}{\mathcal{I}_n^*}
\newcommand{\LIP}{\mathrm{L}_B}
\newcommand{\eps}{\varepsilon}
\newcommand{\abs}[1]{\ensuremath \left| #1 \right|}
\newcommand{\mean}[1]{\ensuremath \left< #1 \right>}
\newcommand{\meano}[1]{\ensuremath \left< #1 \right>_\Omega}
\newcommand{\meang}[1]{\ensuremath \left< #1 \right>_\Gamma}
\def\Joinref #1#2 #3 {\gdef #1{\eqref{#2}--\eqref{#3}}%
  \wlog{}\wlog{\string #1 -> #2 - #3}\wlog{}}
\newcommand{\norm}[1]{\| #1 \|}
\newcommand{\inn}[2]{ \langle #1 , #2  \rangle}
\newcommand{\biginn}[2]{ \big< #1 , #2  \big>}
\newcommand{\scp}[2]{ \left( #1 , #2  \right)}
\newcommand{\bigscp}[2]{\big( #1 , #2 \big)}
\newcommand{\rO}[1] {\langle #1 \rangle_\Omega}
\newcommand{\rG}[1] {\langle #1 \rangle_\Gamma}
\newcommand{\bet}{\boldsymbol \eta}
\newcommand{\bph}{\boldsymbol \varphi}
\newcommand{\brh}{\boldsymbol \rho}
\newcommand{\bphi}{\ov \phi}
\newcommand{\bpsi}{\ov \psi}
\newcommand{\bphn}{\bph^n}
\newcommand{\bphnp}{\bph^{n+1}}
\newcommand{\phl}{\phi^L}
\newcommand{\psl}{\psi^L}
\newcommand{\ml}{\mu^L}
\newcommand{\nl}{\nu^L}
\def\genspazio #1#2#3#4#5{#1^{#2}(#5,#4;#3)}
\def\spazio #1#2#3{\genspazio {#1}{#2}{#3}T0}
\def\L {\spazio L}
\def\H {\spazio H}
\def\Lx #1{L^{#1}(\Omega)}
\def\Hx #1{H^{#1}(\Omega)}
\def\LxG #1{L^{#1}(\Gamma)}
\def\HxG #1{H^{#1}(\Gamma)}
\newcommand{\ov}{\overline}
\newcommand{\bz}{\ov \zeta}
\newcommand{\bxi}{\ov \xi}
\newcommand{\n}{\mathbf{n}}
\newcommand{\grad}{\nabla}
\newcommand{\gradg}{\nabla_\Gamma}
\newcommand{\Lap}{\Delta}
\newcommand{\Lapg}{\Delta_\Gamma}
\newcommand{\dd}{\mathrm{d}}
\newcommand{\ddt}{\frac{\dd}{\dd t}}
\newcommand{\dtau}{\, \dd\tau}
\newcommand{\dy}{\, \dd y}
\newcommand{\ds}{\, \dd s}
\newcommand{\dx}{\, \dd x}
\newcommand{\dt}{\, \dd t}
\newcommand{\dr}{\,\dd r}
\newcommand{\dS}{\,\dd S}
\newcommand{\dSy}{\,\dd S(y)}
\newcommand{\pp}{{p'}}
\newcommand{\qp}{{q'}}
\newcommand{\del}{\partial}
\newcommand{\delt}{\partial_t}
\newcommand{\deln}{\partial_\n}
\newcommand{\deltau}{\partial_\tau}
\newcommand{\delxi}{\partial_{x_i}}
\newcommand{\delxj}{\partial_{x_j}}
\newcommand{\Om}{{\Omega}}
\newcommand{\Ga}{{\Gamma}}
\newcommand{\OT}{{\Omega_T}}
\newcommand{\GT}{{\Gamma_T}}
\newcommand{\ao}{{a_\Omega}}
\newcommand{\ag}{{a_\Gamma}}
\newcommand{\mo}{m_\Omega}
\newcommand{\mg}{m_\Gamma}
\newcommand{\Fw}{F_{\rm w}}
\newcommand{\Gw}{G_{\rm w}}
\newcommand{\intO}{\int_\Omega}
\newcommand{\intG}{\int_\Gamma}
\newcommand{\intOT}{\int_\OT}
\newcommand{\intGT}{\int_\GT}
\newcommand{\intOto}{\int_{\Om_{t_0}}}
\newcommand{\intGto}{\int_{\Ga_{t_0}}}
\newcommand{\cg}{\circledast}
\newcommand{\wto}{\rightharpoonup}
\newcommand{\emb}{\hookrightarrow}
\begin{document}

\title{
	\bfseries
	On the nonlocal Cahn--Hilliard equation \\ 
	with nonlocal dynamic boundary condition \\
	and boundary penalization
}

\author{Patrik Knopf\footnotemark[1] 
\and Andrea Signori\footnotemark[2]}
\date{May 22, 2020}

\renewcommand{\thefootnote}{\fnsymbol{footnote}}
\footnotetext[1]{Fakult\"at f\"ur Mathematik, Universit\"at Regensburg, Universit\"atsstra\ss e 31, 93053 Regensburg, Germany (\href{mailto:patrik.knopf@ur.de}{patrik.knopf@ur.de}),
ORCID: \href{https://orcid.org/0000-0003-4115-4885}{0000-0003-4115-4885}}
\footnotetext[2]{Dipartimento di Matematica e Applicazioni, Universit\`a di Milano--Bicocca, via Cozzi 55, 20125 Milano, Italy
(\href{mailto:andrea.signori02@universitadipavia.it}{andrea.signori02@universitadipavia.it}),
ORCID: \href{https://orcid.org/0000-0001-7025-977X}{0000-0001-7025-977X}}

\maketitle

\begin{center}
\textit{This is a preprint version of the paper. Please cite as:} \\[1ex] 
P. Knopf and A. Signori, \textit{J. Differential Equations}, 280(4):236-291, 2021. \\
\url{https://doi.org/10.1016/j.jde.2021.01.012}
\end{center}

\medskip

\begin{abstract}
The Cahn--Hilliard equation is one of the most common models to describe phase segregation processes in binary mixtures. Various dynamic boundary conditions have already been introduced in the literature to model interactions of the materials with the boundary more precisely. To take long-range interactions into account, we propose a new model consisting of a nonlocal Cahn--Hilliard equation with a nonlocal dynamic boundary condition comprising an additional boundary penalization term. We rigorously derive our model as the gradient flow of a nonlocal free energy with respect to a suitable inner product of order $H^{-1}$ containing both bulk and surface contributions. In the main model, the chemical potentials are coupled by a Robin type boundary condition depending on a specific relaxation parameter. We prove weak and strong well-posedness of this system, and we investigate the singular limits attained when this relaxation parameter tends to zero or infinity.
\end{abstract}

\medskip

\begin{small}
\noindent \textbf{Key words.} Nonlocal Cahn--Hilliard equation; Dynamic boundary conditions; 
Gradient flow; Asymptotic analysis; Reaction rates; Robin boundary conditions. 
\\[1ex]
\noindent \textbf{AMS subject classification.} 
35A01, 
35A02,  
35A15, 
35K61, 
35B40. 

\end{small}

\setlength\parindent{0ex}
\setlength\parskip{1ex}
\newpage

\section{Introduction}

We consider the system of equations
\begin{subequations}
	\label{CH:ROB}
	\begin{alignat}{2}
	&\label{CH:ROB:PHI}
	\delt \phi =  m_\Om \Lap \mu 
	&&\quad\text{in}\; \OT:=\Om\times(0,T),\\
	\label{CH:ROB:MU}
	&\mu = \eps\ao\phi - \eps J * \phi + \tfrac 1 \eps \Fw'(\cdot,\phi) 
	&&\quad\text{in}\; \OT,\\[1ex]
	\label{CH:ROB:PSI}
	&\delt \psi =  m_\Ga \Lapg \nu - \beta m_\Om \deln \mu 
	&&\quad\text{on}\; \GT:=\Ga\times(0,T),\\
	\label{CH:ROB:NU}
	&\nu = \delta \ag\psi - \delta K \cg \psi + \tfrac 1 \delta \Gw'(\cdot,\psi) + \tfrac 1 \delta B'(\cdot,\psi) 
	&&\quad\text{on}\; \GT,\\[1ex]
	\label{CH:ROB:BC}
	&L \deln \mu = \beta\nu - \mu 
	&&\quad\text{on}\; \GT,\\[1ex]
	\label{CH:ROB:PHI:INI}
	&\phi\vert_{t=0} = \phi_0 
	&&\quad\text{in}\; \Om, \\
	\label{CH:ROB:PSI:INI}
	&\psi\vert_{t=0} = \psi_0 
	&&\quad\text{on}\; \Ga
	\end{alignat}
\end{subequations}
where $\eps,\delta>0$ and $\Om\subset \R^d$ with $d=2$ or $d=3$ is a bounded domain with boundary $\Ga=\del\Om$ whose unit outer normal vector field is denoted by $\n$. It consists of a nonlocal Cahn--Hilliard equation in the bulk \eqref{CH:ROB:PHI}--\eqref{CH:ROB:MU} subject to a dynamic boundary condition \eqref{CH:ROB:PSI}--\eqref{CH:ROB:NU} that also has a nonlocal Cahn--Hilliard type structure. The functions $\phi$ and $\psi$ stand for phase-field variables describing the difference of two local relative concentrations of materials in the bulk and on the surface, respectively. 
In \eqref{CH:ROB:PHI} and \eqref{CH:ROB:PSI}, the mobilities $\mo$ and $\mg$ are assumed to be positive constants. 
Moreover, $\mu$ denotes the chemical potential in the bulk whereas $\nu$ denotes the chemical potential on the surface. 
The symbols ``$*$'' in \eqref{CH:ROB:MU} and ``$\cg$'' in \eqref{CH:ROB:NU} stand for the convolutions on $\Omega$ and $\Gamma$, respectively, i.e.,
\begin{alignat*}{3}
(J*\phi)(x,t)& :=\intO J(x-y) \phi(y,t)\dy 
\quad 
&&\hbox{for every $(x,t)\in \Om_T$},
\\
(K\cg \psi)(z,t)& :=\intG K(z-y) \psi(y,t)\dSy 
\quad 
&&\hbox{for every $(z,t)\in \Ga_T$}.
\end{alignat*}
Moreover, the functions $\ao$ and $\ag$ are defined by
\begin{alignat*}{3}
a_\Om(x) &:= \big(J * 1\big)(x) 
\quad\text{and}\quad
a_\Ga(z) &:= \big(K \cg 1\big)(z) 
\end{alignat*} 
for almost all $x\in\Omega$ and $z\in\Gamma$. 
In this model, the chemical potentials are coupled by the Robin type boundary condition \eqref{CH:ROB:BC} with parameters $L> 0$ and $\beta\neq 0$. To this end, we will refer to it as the ``Robin model''. In addition, we also investigate the singular limits $L\to 0$ and $L\to\infty$ of the system \eqref{CH:ROB}
which lead to a Dirichlet type boundary condition ($\mu \vert_\GT=\beta \nu$ $a.e.$ on $\GT$) and a homogeneous Neumann boundary condition ($\partial_{\bold n}\mu=0$ $a.e.$ on $\GT$), respectively. 

In the following subsection, we will explain the motivation behind our model as well as the occurring quantities. 
Before discussing the system \eqref{CH:ROB} and its singular limits in more detail, we first present a short review of previous results to provide a better understanding of the origins and advances of our model.

\subsection{Motivation of our model}

\paragraph{The (local) Cahn--Hilliard equation.} 
The Cahn--Hilliard equation was originally introduced in \cite{cahn-hilliard} to model phase separation and de-mixing processes in binary alloys. Meanwhile, it is frequently used in mathematical models describing phenomena in materials science, life sciences and also image processing. 

The (local) Cahn--Hilliard equation as introduced in \cite{cahn-hilliard} reads as follows:
\begin{subequations}
	\label{CH}
	\begin{alignat}{3}
	\label{CH:1}
	&\delt \phi = m_\Om \Lap \mu, 
	&& \quad \mu = -\eps \Lap \phi + \tfrac 1\eps \Fw'(\phi) 
	&&\quad\text{in}\; \OT,\\
	\label{CH:2}
	&\phi\vert_{t=0}=\phi_0 
	&&&&\quad\text{in } \Om.
	\end{alignat}
\end{subequations}
Here, the functions $\phi=\phi(x,t)$ and $\mu=\mu(x,t)$ depend on position $x\in\Omega$ and time $t\in [0,T]$, where $T>0$ denotes an arbitrary but fixed final time.
To describe a mixture of two materials, the phase-field variable $\phi$ stands for the difference of two local relative concentrations.
We suppose that the parameter $m_\Om$, which represents the so-called mobility, is a positive constant. This is indeed a very typical assumption, although non-constant mobilities are used in some situations (see, e.g., \cite{elliotgarcke}).
After a short period, the phase-field $\phi$ will attain values
close to $\pm 1$ in large regions of the domain $\Omega$. These areas, which correspond to the pure phases of the materials, are 
separated by a thin interface whose thickness is proportional to the parameter $\eps>0$ (which is usually chosen very small).
The function $\mu$ denotes the chemical potential in the bulk (i.e., in $\Omega$). It describes chemical reactions influencing the time evolution of the phase-field
and can be expressed as the Fr\'echet derivative of the following bulk free energy of Ginzburg--Landau type:
\begin{align}
E^\text{GL}_\text{bulk}(\phi) = \int_\Omega \frac \eps 2|\grad \phi|^2 + \frac 1 \eps \Fw(\phi) \dx,
\end{align}
where the function $\Fw$ is the bulk potential. If phase separation processes are to be described, $\Fw$ is usually supposed to exhibit a double-well structure attaining its
global minima at $-1$ and $1$ (corresponding to the pure phases) and a local maximum at $0$. 
A physically relevant choice for $\Fw$ is a singular logarithmic potential (cf. \eqref{POT:LOG}). For simplicity of the mathematical analysis, it is
often approximated by a regular polynomial potential, typically the double-well potential $\Fw(s) = W_\text{dw}(s) = \frac{1}{4}(s^2-1)^2$ (see also Remark~\ref{EX:POT}). Since the time evolution of the phase-field and the chemical potential is considered in a bounded domain, it is necessary to impose suitable boundary conditions. The homogeneous Neumann conditions
\begin{align}
	\label{HNC}
	\del_\n \phi = 0, \quad
	\del_\n \mu  = 0  \quad\text{on}\; \GT
\end{align}
are the classical choice. 
The condition $\eqref{HNC}_1$ implies that the interface intersects the boundary at a perfect contact angle of ninety degrees, while the no-flux condition $\eqref{HNC}_2$ entails that the phase-field $\phi$ satisfies the mass conservation law
\begin{align}
\int_\Omega \phi(t) \dx = \int_\Omega \phi(0) \dx, \quad t\in[0,T].
\end{align}
Moreover both conditions in \eqref{HNC} imply the energy dissipation law
\begin{align}
\ddt E^\text{GL}_\text{bulk}\big(\phi(t)\big) + m_\Om \int_\Omega |\grad\mu(t)|^2 \dx = 0, \quad t\in[0,T].
\end{align}
We point out that the Cahn--Hilliard equation subject to the boundary conditions \eqref{HNC} can be interpreted as a gradient flow of type $H^{-1}$ of the bulk free energy $E_\text{bulk}^\text{GL}$ (cf. \cite{Cowan}). The Cahn--Hilliard equation \eqref{CH} with homogeneous Neumann conditions \eqref{HNC} is already very well understood and there exists an extensive literature (see, e.g., 
\cite{Abels-Wilke,Bates-Fife,Cherfils,elliotgarcke,elliotzheng,zheng,rybka,pego}).

\paragraph{The (local) Cahn--Hilliard equation with dynamic boundary conditions.}
In some situations, it turned out that homogeneous Neumann boundary conditions are not satisfactory as they neglect the influence of certain processes on the boundary to the dynamics in the bulk. For instance, separate chemical reactions on the boundary cannot be taken into account. 
However, especially in certain applications (e.g., applications in hydrodynamics and contact line problems), it proved necessary to describe short-range interactions of the binary mixture with the solid wall of the container more precisely. 
To this end, physicists proposed a surface free energy which is again of Ginzburg--Landau type (cf. \cite{Fis1,Fis2,Kenzler}):
\begin{equation}\label{DEF:EN:GL}
E^\text{GL}(\phi,\psi) := E^\text{GL}_\text{bulk}(\phi) + E^\text{GL}_\text{surf}(\psi)
\quad\text{with}\quad
E^\text{GL}_\text{surf}(\psi)  = \intG \frac{\kappa \delta }{2} \abs{\gradg \psi}^2 + \frac{1}{\delta} \Gw(\psi) \dS.
\end{equation}
Here, the symbol $\gradg$ denotes the surface gradient on $\Gamma$, $G$
is a surface potential, the parameter $\kappa\ge 0$ acts as a weight for surface diffusion effects, 
and $\delta>0$ denotes a small parameter corresponding to the thickness of the interface on the surface. In the case $\kappa = 0$ this energy
is related to the moving contact line problem (see, e.g., \cite{thompson-robbins}). Recently, various dynamic boundary conditions corresponding to the energy $E^\text{GL}$
have been derived and analyzed in the literature, for instance
\cite{colli-fukao-ch,colli-gilardi,Gal1,GalWu,colli-gilardi-sprekels,liero,mininni,miranville-zelik,motoda,racke-zheng,WZ, FukaoWu, Wu,GalGra, CGG, CGW, CFW, MW}.
In particular, Cahn--Hilliard systems with dynamic boundary conditions exhibiting also a Cahn--Hilliard type structure have become very popular in recent times. In general, such models can be interpreted as a gradient flow of the energy $E^\text{GL}$ with respect to a suitable inner product of order $H^{-1}$ which contains both a bulk and a surface contribution (see, e.g., \cite{GK,KL,KLLM}).

The following model which was proposed and analyzed in \cite{KLLM} can be regarded as the local analogue of the model \eqref{CH:ROB} (with $B\equiv 0$) we intend to study. It reads as follows:
\begin{subequations}\label{CH:INT}
	\begin{alignat}{3}
	& \delt\phi = m_\Om \Lap \mu, 
	&&\quad  \mu = - \eps \Lap \phi + \tfrac 1\eps \Fw'(\phi) 
	&&\quad \text{in } \OT, \label{CH:INT:1}\\
	& \delt\psi = m_\Gamma\Lapg  \nu - \beta m_\Om\deln  \mu, 
	&&\quad  \nu = - \delta\kappa \Lapg \psi + \tfrac 1\delta \Gw'(\psi) + \eps\deln \phi 
	&&\quad \text{on } \GT, \label{CH:INT:2}\\
	& \phi\vert_\GT = \psi,
	&&\quad L \deln  \mu = \beta  \nu -  \mu 
	&&\quad \text{on } \GT, \label{CH:INT:3} \\
	& \phi\vert_{t=0} = \phi_0 
	&&&&\quad \text{in } \Om, \label{CH:INT:4} \\
	& \psi\vert_{t=0} = \psi_0 = \phi_0\vert_\Ga 
	&&&&\quad \text{on } \Ga, \label{CH:INT:5}
	\end{alignat}
\end{subequations}
where $\beta\neq 0$, $L>0$.
Here, the chemical potentials $\mu$ and $\nu$ are coupled by a Robin type boundary condition $\eqref{CH:INT:3}_2$ (which is the same as the condition \eqref{CH:ROB:BC} in our system) to model chemical reactions between the materials in the bulk and the materials on the surface. 
In this context, the constant $1/L$ is related to the reaction rate. 
Here, the term ``reactions'' is to be understood in
a general sense including chemical reactions but also adsorption or desorption processes.
By the condition $\eqref{CH:INT:3}_2$, the mass flux $-m_\Om \deln \mu$ (which describes the motion of the materials towards and away from the boundary) is directly influenced by differences in the chemical potentials.

Provided that a solution of \eqref{CH:INT} is sufficiently regular, it satisfies the mass conservation law
\begin{align}
\label{INT:MASS}
	\beta\intO \phi(t) \dx + \intG \psi(t) \dS = \beta\intO \phi_0 \dx + \intG \psi_0 \dS,
	\quad t\in[0,T],
\end{align}
as well as the energy dissipation law
\begin{align}
	\label{INT:NRG}
\begin{aligned}
	& \ddt E^\text{GL}\big(\phi(t),\psi(t)\big) \\
	&\quad + m_\Om\intO |\grad\mu(t)|^2 \dx + m_\Ga\intG |\gradg \nu(t)|^2 \dS 
	+ \frac{m_\Om}{L} \intG \abs{ \beta\nu-\mu } ^2 \dS = 0,
\end{aligned}
\end{align}
for all $t\in [0,T]$. This means that the parameter $\beta$ acts as a weight in the mass conservation relation \eqref{INT:MASS}. 
Moreover, since the total free energy $E^\text{GL}$ is bounded from below (at least for reasonable choices of $\Fw$ and $\Gw$), the dissipation law \eqref{INT:NRG} implies that the potentials $\mu$ and $\nu$ converge to the chemical equilibrium $\mu\vert_\GT=\beta \nu$ over time. For more details, we refer the reader to \cite{KLLM}.

Furthermore, the singular limits $L\to 0$ and $L\to\infty$ were analyzed rigorously in \cite{KLLM}. The limit $L\to 0$ leads to the system
\pagebreak[2] 
\begin{subequations}\label{CH:GMS}
	\begin{alignat}{3}
	& \delt\phi = m_\Om \Lap \mu, 
	&&\quad  \mu = - \eps \Lap \phi + \tfrac 1\eps \Fw'(\phi) 
	&&\quad \text{in } \OT, \label{CH:GMS:1}\\
	& \delt\psi = m_\Gamma\Lapg  \nu - \beta m_\Om\deln  \mu, 
	&&\quad  \nu = - \delta \kappa \Lapg \psi + \tfrac 1\delta \Gw'(\psi) + \eps\deln \phi 
	&&\quad \text{on } \GT, \label{CH:GMS:2}\\
	& \phi\vert_\GT = \psi, 
	&&\quad \mu\vert_\GT = \beta  \nu  
	&&\quad \text{on } \GT, \label{CH:GMS:3} \\
	& \phi\vert_{t=0}= \phi_0 
	&&&&\quad \text{in } \Om, \label{CH:GMS:4} \\
	& \psi\vert_{t=0}= \psi_0 = \phi_0\vert_\Ga 
	&&&&\quad \text{on } \Ga, \label{CH:GMS:5}
	\end{alignat}
\end{subequations}
which was introduced and investigated previously in \cite{GMS}.
The condition $\eqref{CH:GMS:3}_2$ means that the chemical potential in the bulk and the chemical potential on the surface are supposed to differ only by a multiplicative constant%
\footnote{In fact, the setting in the referenced paper is even more general as there the factor $\beta$ is allowed to be a function in $L^\infty(\Ga)$ that is uniformly positive $a.e.$ on $\Ga$.}.
This means that, due to the condition $\eqref{CH:GMS:3}_2$, the potentials $\mu$ and $\nu$ are \textit{always} in chemical equilibrium. As the constant $1/L$ can be interpreted as $\infty$, this model describes the idealized case of instantaneous relaxation to chemical equilibrium.
Sufficiently regular solutions of the model satisfy the mass conservation law \eqref{INT:MASS} and the energy dissipation law
\begin{align}
\label{GMS:NRG}
 \ddt E^\text{GL} \big(\phi(t),\psi(t)\big) + m_\Om \intO |\grad\mu(t)|^2 \dx + m_\Ga \intG |\gradg \nu(t)|^2 \dS  = 0,
\end{align}
for all $t\in[0,T]$.

\pagebreak[2]

On the other hand, in the limit $L\to\infty$ we arrive at the system
\begin{subequations}\label{CH:LW}
	\begin{alignat}{3}
	& \delt\phi = m_\Om \Lap \mu, 
	&&\quad \mu = - \eps \Lap \phi + \tfrac 1\eps \Fw'(\phi) 
	&&\quad \text{in } \OT, \label{CH:LW:1}\\
	& \delt\psi = m_\Gamma\Lapg  \nu - \beta m_\Om\deln  \mu, 
	&&\quad \nu = - \delta\kappa \Lapg \psi + \tfrac 1\delta \Gw'(\psi) + \eps\deln \phi 
	&&\quad \text{on } \GT, \label{CH:LW:2}\\
	& \phi\vert_\GT = \psi,
	&&\quad \deln \mu = 0  
	&&\quad \text{on } \GT, \label{CH:LW:3} \\
	& \phi\vert_{t=0} = \phi_0 
	&&&&\quad \text{in } \Om, \label{CH:LW:4} \\
	& \psi\vert_{t=0}= \psi_0 = \phi_0\vert_\Ga 
	&&&&\quad \text{on } \Ga. \label{CH:LW:5}
	\end{alignat}
\end{subequations}
It was derived in \cite{LW} by an energetic variational approach and it was further analyzed in \cite{LW} and \cite{GK}. The crucial difference between \eqref{CH:LW} and the models \eqref{CH:INT} and \eqref{CH:GMS} is that the chemical potentials in the bulk and on the boundary are completely decoupled. 
Accordingly, the constant $1/L$ (that is related to the reaction rate) can be interpreted as zero.
This means that the model does not describe chemical but only mechanical interactions 
(through the trace relation for the phase-fields)
between the bulk and the surface quantities. As a consequence, the bulk and the surface mass are conserved \textit{separately}, i.e., 
\begin{align}
\label{LW:MASS}
\intO \phi(t) \dx 
= \intO \phi_0 \dx 
\quad\text{and}\quad
\intG \psi(t) \dS 
= \intG \psi_0 \dS, \quad t \in [0,T].
\end{align}
Moreover, the dissipation law \eqref{GMS:NRG} holds true for this model. 

Finally, we point out that a variant of the model \eqref{CH:LW}, where $\eqref{CH:LW:3}_2$ was replaced by a Robin type transmission condition $K\deln\phi = H(\psi) - \phi$ (with $K>0$ and a function $H\in C^2(\R)$ satisfying suitable growth conditions), was studied in \cite{KL}. In this case, the quantity $\psi$ can be interpreted as the difference in volume fractions of two materials which are restricted to the boundary.

\paragraph{The nonlocal Cahn--Hilliard equation.}
Although the derivation of the local Cahn--Hilliard equation is physically sound, nonlocal contributions to the total free energy are ignored. This means that only short-range interactions between the particles of the interacting materials are considered. A rigorous derivation of a phase-field model taking both short and long-range interactions into account was firstly presented in \cite{giacomin-lebowitz-1996} (see also \cite{giacomin-lebowitz-1997,giacomin-lebowitz-1998} for more information). This leads to the nonlocal Cahn--Hilliard equation 
\begin{subequations}
	\label{CH:NL}
	\begin{alignat}{3}
	\label{CH:NL:1}
	&\delt \phi =  m_\Om \Lap \mu,
	&&\qquad \mu = \eps\ao\phi - \eps J * \phi + \tfrac 1 \eps \Fw'(\phi) 
	&&\quad\text{in}\; \OT,\\
	\label{CH:NL:2}
	&\deln \mu = 0  
	&&&&\quad\text{on}\; \GT, \\
	\label{CH:NL:PHI:3}
	&\phi\vert_{t=0} = \phi_0 
	&&&&\quad\text{in}\; \Om,
	\end{alignat}
\end{subequations}
where $J:\R^d\to\R$ is an even interaction kernel, i.e., $J(x)=J(-x)$ for all $x\in\R^d$.
This system of equations can be interpreted as the gradient flow of the nonlocal Helmholtz free energy
\begin{align}
\label{E:NL:BULK}
E_\text{bulk}(\phi) = \frac \eps 4 \intO\intO J(x-y) |\phi(x)-\phi(y)|^2 \dy \dx
	+ \frac 1 \eps \intO \Fw(\phi(x)) \dx
\end{align}
with respect to a suitable inner product of order $H^{-1}$ (see, e.g., \cite{GGG}). By the substitution $F(x,\phi) = \Fw(\phi) + \tfrac 12 a_\Om(x) \phi^2$ with $a_\Om(x) = (J*1)(x)$, the nonlocal Cahn--Hilliard equation \eqref{CH:NL} as well as the free energy $E_\text{bulk}$ can be expressed equivalently as\pagebreak[2]
\begin{subequations}
	\label{CH:NL:ALT}
	\begin{alignat}{3}
	\label{CH:NL:ALT:1}
	&\delt \phi =  m_\Om \Lap \mu, 
	&&\qquad \mu = - \eps J * \phi + \tfrac 1 \eps F'(\cdot,\phi) &&\quad\text{in}\; \OT,\\
	\label{CH:NL:ALT:2}
	&\deln \mu = 0  
	&&&&\quad\text{on}\; \GT, \\
	\label{CH:NL:ALT:3}
	&\phi\vert_{t=0} = \phi_0 &&&&\quad\text{in}\; \Om,
	\end{alignat}
\end{subequations}
and 
\begin{align}
\label{E:NL:ALT:BULK}
E_\text{bulk}(\phi) = - \frac \eps 2 \intO\intO J(x-y)\phi(x)\phi(y) \dx\dy
	+ \frac 1 \eps \intO F(x,\phi(x)) \dx.
\end{align} 
The nonlocal Cahn--Hilliard equation has already been investigated from many different viewpoints. We refer the reader to \cite{abels-bosia-grasselli,bates-han,bates-han-dir,GGG,porta-grasselli,shaker-hassan,gajewski-zacharias,gal-doubly,gal-doubly-strong,davoli-2019} for various well-posedness results, 
to \cite{colli-frigeri-grasselli,frigeri-gal-grasselli,frigeri-grasselli,porta-grasselli-fluid, porta-giorgini-grasselli} for the investigation of the nonlocal Cahn--Hilliard equation coupled to fluid equations,
and also to \cite{abels-bosia-grasselli,frigeri-grasselli-long,gal-grasselli-long,londen-petzeltova,
londen-petzeltova-sep} for results on long-time behavior. The convergence of the nonlocal Cahn--Hilliard equation to the local Cahn--Hilliard equation, under suitable assumptions on the convolution kernel $J$, has been investigated in \cite{davoli-scarpa-trussardi,davoli-scarpa-trussardi-W11,davoli-2019,melchionna,bonetti}.

\paragraph{The nonlocal Cahn--Hilliard equation with dynamic boundary conditions.}
As mentioned above, there are already several works concerning the local Cahn--Hilliard equation with dynamic boundary conditions.  
However, the authors were able to find only one contribution dealing with the nonlocal Cahn--Hilliard equation with dynamic boundary conditions in the literature, see \cite{gal-dynamic}. Therein, the nonlocal Cahn--Hilliard equation subject to fractional dynamic boundary conditions is studied. 
The system of equations (in which the mobilities as well as $\eps$ and $\delta$ are set to one) reads as
\pagebreak[2]
\begin{subequations}
	\label{CH:GAL}
	\begin{alignat}{3}
	\label{CH:GAL:1}
	&\delt \phi =  \Lap \mu,
	\qquad \mu = (-\Lap)^s \phi + \Fw'(\phi) &&\quad\text{in}\; \OT,\\
	\label{CH:GAL:3}
	&\delt \psi = (-\Lapg)^l\psi + C_sN^{2-2s}\phi + \beta\psi + \Gw'(\psi)  &&\quad\text{on}\; \GT,\\
	\label{CH:GAL:INI}
	&\phi\vert_{t=0} = \phi_0 &&\quad\text{in}\; \Om,
	\end{alignat}
\end{subequations}
with $\frac 1 2 <s<1$, $0<l<1$ and $\beta>0$. Here, $(-\Lap)^s$ denotes the regional fractional Laplace operator, $(-\Lapg)^l$ denotes the fractional Laplace--Beltrami operator and $C_sN^{2-2s}$ stands for the fractional normal derivative. By the formal choice $s=l=1$, this model corresponds to the local Cahn--Hilliard system subject to a dynamic boundary condition of Allen--Cahn type (as proposed in \cite{Fis2,binder}).

In this paper, however, we pursue a different idea. To describe both short and long-range interaction of the materials on the boundary, we define the nonlocal total free energy (of Helmholtz type) as
\begin{align}
\label{DEF:EN}
\notag E(\phi,\psi) &:= E_\text{bulk}(\phi) + E_\text{surf}(\psi) + E_\text{pen}(\psi)\\
\notag&:= \left[ \frac \eps 4 \intO \intO J(x-y) \abs{\phi(x)-\phi(y)}^2 \dx\dy 
+ \frac 1 \eps \intO F_{\rm w}(x,\phi(x)) \dx \right]\\
\notag&\quad + \left[ \frac \delta 4 \intG \intG K(z-y) \abs{\psi(x)-\psi(y)}^2 \dS(z)\dS(y) 
+ \frac 1 \delta \intG G_{\rm w}(z,\psi(z)) \dS(z) \right] \\
&\quad + \frac 1 \delta \intG B(z,\psi(z)) \dS(z) ,
\end{align}
where $E_\text{surf}$ is introduced by analogy to the surface free energy defined in \eqref{DEF:EN:GL}. As in \cite{KL}, the function $\psi$ can be interpreted as the difference of local relative concentrations of two materials that are restricted to the boundary.
The function $K:\R^d\to\R$ defines an additional even kernel modeling both short- and long-range interactions among the materials described by $\psi$.

For generality, we also allow the potentials $\Fw=\Fw(x,s)$ and $\Gw=\Gw(z,s)$ to depend also on the spatial variables $x\in\Om$ and $z\in\Ga$. This generalization might make sense especially for $\Gw$ if the solid wall of the container consists of several materials interacting differently with the mixture inside. For this reason, we also allow an additional penalty term $E_\text{pen}$ in the total free energy. For instance, we can choose the function 
\begin{align}
\label{DEF:PEN}
	B:\Ga\times\R\to\R, \quad (z,s) \mapsto b(z)s,
\end{align}
where $b$ can be interpreted as a weight function. Then the corresponding penalty term $E_\text{pen}$ is expected to describe different regions of the boundary that attract the material associated with  $\psi = \pm 1$ and repel the other material associated with $\psi = \mp 1$. For more detail see Remark~\ref{EX:PEN}.

Now, the system \eqref{CH:ROB} can be derived as the gradient flow equation of the energy $E$ with respect to a suitable inner product of order $H^{-1}$ containing both a bulk and a surface contribution. We present a rigorous derivation in Section~\ref{SEC:GF}. 
In contrast to the model \eqref{CH:GAL} studied in \cite{gal-dynamic}, the dynamic boundary condition in \eqref{CH:ROB} is also of nonlocal Cahn--Hilliard type involving the phase field $\psi$ and the chemical potential $\nu$ on the surface. 

Since the chemical potentials are coupled by the Robin type boundary condition \eqref{CH:ROB:BC}, the system \eqref{CH:ROB} (with $B\equiv 0$) can be regarded as the nonlocal analogue of the system \eqref{CH:INT} studied in \cite{KLLM}.
As a consequence, sufficiently regular solutions to \eqref{CH:ROB} satisfy the same mass conservation and dissipation properties as system \eqref{CH:INT}. To be precise, this means that the mass conservation law
\begin{align}
\label{CH:ROB:MASS}
\beta\intO \phi(t) \dx + \intG \psi(t) \dS = \beta\intO \phi_0 \dx + \intG \psi_0 \dS
\end{align}
as well as the energy dissipation law
\begin{align}
\label{CH:ROB:DISS}
\begin{aligned}
&\ddt E\big(\phi(t),\psi(t)\big) \\
&\quad + m_\Om \intO |\grad\mu(t)|^2 \dx 
+ m_\Ga \intG |\gradg \nu(t)|^2 \dS 
+ \frac{m_\Om}{L} \intG \abs{\beta\nu-\mu}^2 \dS = 0
\end{aligned}
\end{align}
are satisfied for all $t\in[0,T]$. 
As the total free energy $E$ is bounded from below (for reasonable choices of $J$, $K$, $\Fw$, $\Gw$ and $B$), we infer that $\ddt E(\phi(t),\psi(t))$ converges to zero as $t\to\infty$. 
Hence, the potentials $\mu$ and $\nu$ converge to the chemical equilibrium $\mu\vert_\GT=\beta \nu$ over the course of time.

By the substitutions $F(x,\phi) = \Fw(x,\phi) + \tfrac 12 a_\Om(x) \phi^2$ with $a_\Om(x) = (J*1)(x)$, $x\in\Om$ and $G(z,\psi) = \Gw(z,\psi) + \tfrac 12 a_\Ga(z) \psi^2$ with $a_\Ga(z) = (K\cg 1)(z)$, $z\in\Ga$, the energy $E$ and the system \eqref{CH:ROB} can be expressed equivalently as
\begin{align}
\label{DEF:EN:ALT}
	\begin{aligned}
	E(\phi,\psi) &= \left[ - \frac \eps 2 \intO (J\ast\phi) \phi \dx
	+ \frac 1 \eps \intO F(\cdot,\phi) \dx \right]\\
	&\quad + \left[ -\frac \delta  2 \intG (K\cg \psi) \psi \dS 
	+ \frac 1 \delta \intG G(\cdot,\psi) \dS \right] 
	+ \frac 1 \delta  \intG B(\cdot,\psi) \dS ,
	\end{aligned}
\end{align}
and 
\begin{subequations}\label{CH:ROB:ALT}
	\begin{alignat}{3}
	& \delt\phi = m_\Om \Lap \mu, 
	&&\quad  \mu = - \eps J*\phi + \tfrac 1\eps F'(\cdot, \phi) 
	&&\quad \text{in } \OT, \label{CH:ROB:ALT:1}\\
	& \delt\psi = m_\Gamma\Lapg  \nu - \beta m_\Om\deln  \mu, 
	&&\quad  \nu = - \delta  K\cg\psi  + \tfrac 1 \delta G'(\cdot,\psi) + \tfrac 1 \delta  B'(\cdot,\psi)
	&&\quad \text{on } \GT, \label{CH:ROB:ALT:2}\\
	&L\deln \mu  = \beta  \nu - \mu 
	&&&& \quad \text{on } \Sigma_T, \label{CH:ROB:ALT:4} \\
	& \phi\vert_{t=0} = \phi_0 
	&&&&\quad \text{in } \Om, \label{CH:ROB:ALT:5} \\
	& \psi\vert_{t=0} = \psi_0 
	&&&&\quad \text{on } \Ga. \label{CH:ROB:ALT:6}
	\end{alignat}
\end{subequations}
In the following, we will switch between these equivalent formulations at our convenience.

\subsection{The singular limits $L\to 0$ and $L\to\infty$}

Similar to the corresponding local model \eqref{CH:INT}, the constant $1/L$ is related to the reaction rate (in the sense of chemical reactions as well as adsorption and desorption processes). As in the local case, we are also interested in the singular limits $L\to 0$ and $L\to\infty$ of the system \eqref{CH:ROB}.

Passing to the limit $L\to 0$ in the system \eqref{CH:ROB} we (formally) obtain the boundary condition $\beta\nu=\mu\vert_\GT$ on $\GT$.  
Thus, we can express the limit system as follows:
\pagebreak[2]
\begin{subequations}
	\label{CH:DIR}
	\begin{alignat}{2}
	\label{CH:DIR:PHI}
	&\delt \phi =  m_\Om \Lap \mu &&\quad\text{in}\; \OT,\\
	\label{CH:DIR:MU}
	&\mu= \eps\, \ao\phi - \eps J * \phi + \tfrac 1 \eps\Fw'(\cdot,\phi) &&\quad\text{in}\; \OT,\\[1ex]
	\label{CH:DIR:PSI}
	&\delt \psi =  \tfrac 1 \beta m_\Ga \Lapg \mu - \beta m_\Om\, \deln \mu &&\quad\text{on}\; \GT,\\
	\label{CH:DIR:NU}
	&\mu\vert_\GT 
	= \beta\delta\, \ag\psi - \delta \beta K \cg \psi + \tfrac \beta \delta \Gw'(\cdot,\psi) + \tfrac \beta \delta B'(\cdot,\psi) &&\quad\text{on}\; \GT,\\[1ex]
	\label{CH:DIR:PHI:INI}
	&\phi\vert_{t=0} = \phi_0 &&\quad\text{in}\; \Om, \\
	\label{CH:DIR:PSI:INI}
	&\psi\vert_{t=0} = \psi_0 &&\quad\text{on}\; \Ga.
	\end{alignat}
\end{subequations} 
It can be interpreted as the nonlocal analogue to the model \eqref{CH:GMS}. 
Here, the constant $1/L$ can again be interpreted as infinity meaning that this model describes the idealized scenario of instantaneous relaxation to the situation where the chemical potentials $\mu$ and $\nu = \beta^{-1}\mu\vert_\GT$ are in chemical equilibrium. Sufficiently regular solutions satisfy the mass conservation law \eqref{CH:ROB:MASS} as well as the energy dissipation law
\begin{align}
\label{CH:DIR:DISS}
	\ddt E\big(\phi(t),\psi(t)\big) 
	+ m_\Om \intO |\grad\mu(t)|^2 \dx 
	+ m_\Ga \intG |\gradg \nu(t)|^2 \dS 
	= 0. 
\end{align}

In the limit $L\to \infty$ we (formally) obtain the boundary condition $\deln\mu = 0$ on $\GT$. As a consequence, the subsystems for $(\phi,\mu)$ and for $(\psi,\nu)$ are completely decoupled. 
To be precise, the pair $(\phi,\mu)$ satisfies the standard nonlocal Cahn--Hilliard equation with homogeneous Neumann boundary condition 
\begin{subequations}
	\label{CH:DEC:BULK}
	\begin{alignat}{3}
	\label{CH:DEC:BULK:1}
	&\delt \phi =  m_\Om \Lap \mu ,
	&&\quad \mu = \ao \phi - \eps J * \phi + \tfrac 1 \eps \Fw'(\cdot,\phi) &&\quad\text{in}\; \OT,\\
	\label{CH:DEC:BULK:2}
	&\deln\mu = 0 
	&&&&\quad\text{on}\; \GT,\\
	\label{CH:DEC:BULK:3}
	&\phi\vert_{t=0} = \phi_0 
	&&&&\quad\text{in}\; \Om,
	\end{alignat}
\end{subequations}
whereas the pair $(\psi,\nu)$ satisfies the following nonlocal Cahn--Hilliard type equation on the surface
\begin{subequations}
	\label{CH:DEC:SURF}
	\begin{alignat}{3}
	\label{CH:DEC:SURF:1}
	&\delt \psi =  m_\Ga\Lapg \nu,
	&&\quad \nu = \ag \psi - \delta K \cg \psi + \tfrac 1 \delta \Gw'(\cdot,\phi) 
		+ \tfrac 1 \delta B'(\cdot,\phi) 
	&&\quad\text{on}\; \GT,\\
	\label{CH:DEC:SURF:2}
	&\psi\vert_{t=0}= \psi_0 
	&&&&\quad\text{on}\; \Ga.
	\end{alignat}
\end{subequations}
For that reason, we refer to (\eqref{CH:DEC:BULK},\eqref{CH:DEC:SURF}) as the ``decoupled model''.
If $B \equiv 0$, this model can be interpreted as the nonlocal analogue to the model \eqref{CH:LW}. However, in contrast to \eqref{CH:LW} where the phase fields $\phi$ and $\psi$ were coupled by the Dirichlet type condition $\eqref{CH:LW:3}_1$, 
the systems \eqref{CH:DEC:BULK} and \eqref{CH:DEC:SURF} are completely independent. This means that the bulk and the surface materials are assumed to not interact at all. 
As the mass flux $- m_\Om \deln \mu$ is zero, we obtain separate conservation of bulk and boundary mass, i.e.,
\begin{align}
\label{DEC:MASS}
\intO \phi(t) \dx 
= \intO \phi_0 \dx 
\quad\text{and}\quad
\intG \psi(t) \dS 
= \intG \psi_0 \dS, \quad t \in [0,T].
\end{align}
Moreover, the energy dissipation law \eqref{CH:DIR:DISS} is satisfied by sufficiently regular solutions. 

Since the interfacial thickness parameters $\eps$ and $\delta$ as well as the constant mobilities $m_\Om$
and $m_\Ga$ will not play any role in the analysis, we conveniently set them to one
in the rest of the paper.

\section{Preliminaries}

\subsection{Notation} Throughout this paper we use the following notation:  For any $1 \leq p \leq \infty$ and $k \geq 0$, the standard Lebesgue and Sobolev spaces defined on $\Omega$ are denoted as $L^p(\Omega)$ and $W^{k,p}(\Omega)$, along with the norms $\norm{\cdot}_{L^p(\Omega)}$ and $\norm{\cdot}_{W^{k,p}(\Omega)}$. For the case $p = 2$, these spaces become Hilbert spaces and we use the notation $H^k(\Omega):= W^{k,2}(\Omega)$. For any exponent $p>1$, we write $\pp$ to denote its dual Sobolev exponent, i.e., $(1/p)+(1/\pp)=1$. We point out that $H^0(\Omega)$ can be identified with $L^2(\Omega)$. A similar notation is used for Lebesgue and Sobolev spaces on $\Gamma$. The definition of tangential gradients on Lipschitz surfaces can be found, e.g., in \cite[Def.~3.1]{buffa}.
For any Banach space $X$, we denote its topological dual space by $X'$ and the associated duality pairing by $\inn{\cdot}{\cdot}_X$.  If $X$ is a Hilbert space, we denote its inner product by $(\cdot, \cdot)_X$.  We define 
\begin{align*}
\mean{u}_\Omega := \begin{cases}
\frac{1}{\abs{\Omega}} \inn{u}{1}_{H^1(\Omega)} & \text{ if } u \in H^1(\Omega)', \\
\frac{1}{\abs{\Omega}} \int_\Omega u \dx & \text{ if } u \in L^1(\Omega)
\end{cases}
\end{align*}
as the spatial mean of $u$, where $\abs{\Omega}$ denotes the $d$-dimensional Lebesgue measure of $\Omega$. 
The spatial mean for $v \in H^1(\Gamma)'$ and $v \in L^1(\Gamma)$ can be defined analogously
and will be denoted by $\mean{v}_\Ga $.
For any real number $p\in[1,\infty]$ and any integer $k\ge 0$, we set
\begin{align*}
\LL^p := L^p(\Omega) \times L^p(\Gamma), 
\quad\text{and}\quad
\HH^k := H^k(\Omega) \times H^k(\Gamma),
\end{align*}
and we identify $\HH^0$ with $\LL^2$. We notice that the space $\HH^k$ is a Hilbert space with respect to the inner product
\begin{align*}
\bigscp{(\phi,\psi)}{(\zeta,\xi)}_{\HH^k} := \bigscp{\phi}{\zeta}_{H^k(\Omega)} + \bigscp{\psi}{\xi}_{H^k(\Gamma)}
\end{align*}
and its induced norm $\norm{\,\cdot\,}_{\HH^k}:= \scp{\cdot}{\cdot}_{\HH^k}^{1/2}$.

\bigskip
\subsection{Assumptions}
\paragraph{General assumptions.}
\begin{enumerate}[label=$(\mathrm{A \arabic*})$, ref = $\mathrm{A \arabic*}$]
\item \label{ass:dom} We take $\Omega \subset \R^d$ with $d \in \{2,3\}$ to be a bounded domain with Lipschitz boundary $\Gamma$. For any $t>0$ we write $\Om_t:=\Omega\times(0,t)$ as well as $\Ga_t:=\Gamma\times(0,t)$. 
Since $\Omega$ is bounded, we can find a positive radius $R>0$ such that $\ov\Omega \subset B_R(0)$ where $B_R(0)$ denotes the open ball in $\R^d$ with radius $R$ and center $0$.
In general, we assume that $T>0$. If not stated otherwise, we suppose that $L>0$ and $\beta\neq 0$. The mobilities $\mo$ and $\mg$, and the interface parameters $\eps$ and $\delta$ are positive constants. As their choice does not have any impact on the mathematical analysis, they will be set to one from now on.
\item \label{ass:kernel}
We assume that the convolution kernels $J,K:\R^d\to\R$ are even (i.e., $J(x) =J(-x)$ and $K(x) =K(-x)$ for almost all $x\in\R^d$), nonnegative almost everywhere, and satisfy $J\in W^{1,1}(\R^d)$ and $K\in W^{2,r}(\R^d)$ with $r>1$. 
Note that the regularity assumption on $K$ is higher than that on $J$ since the traces $K(z-\cdot)\vert_\Gamma$ and $\gradg K(z-\cdot)\vert_\Gamma$ must exist and belong to $L^r(\Gamma)$ for all $z\in\Gamma$ (see, e.g., Lemma~\ref{LEM:JK:UNIQ} and its proof).

In addition, we suppose that 
\begin{subequations}
\label{DEF:A}
	\begin{alignat}{3}
	\label{DEF:AINF}
	a_* &:= \underset{x\in\Om}{\inf} \intO J(x-y) \dy > 0, 
	&&\qquad
	a_\cg &&:= \underset{z\in\Ga}{\inf} \intG K(z-y) \dSy > 0, \\
	\label{DEF:ASUP}
	a^* &:= \underset{x\in\Om}{\sup} \intO J(x-y) \dy < \infty, 
	&&\qquad
	a^\cg &&:= \underset{z\in\Ga}{\sup} \intG K(z-y) \dSy < \infty, \\
	b^* &:= \underset{x\in\Om}{\sup} \intO |\grad J(x-y) |\dy < \infty, 
	&&\qquad
	b^\cg &&:= \underset{z\in\Ga}{\sup} \intG| \gradg K(z-y)| \dSy < \infty
	\label{DEF:BSUP}		
	\end{alignat} 
\end{subequations}
where $\inf$ and $\sup$ are to be understood as the essential infimum and the essential supremum, respectively.
We further use the notation
\begin{subequations}
\begin{alignat}{3}
	\label{DEF:AO}
	a_\Om(x) &:= \big(J * 1\big)(x) &&= \intO J(x-y) \dy, &&\quad\text{for almost all}\; x\in\Omega, \\
	\label{DEF:AG}
	a_\Ga(z) &:= \big(K \cg 1\big)(z) &&= \intG K(z-y) \dSy, &&\quad\text{for almost all}\; z\in\Gamma.
\end{alignat} 
We point out that $a_\Om\in L^\infty(\Omega)$ and $a_\Ga\in L^\infty(\Gamma)$ due to \eqref{DEF:A}.
\end{subequations}
\item  \label{ass:pot:dw}
We suppose that the potentials $\Fw:\Om\times\R \to \R$, $\Gw: \Ga\times\R \to \R$ are nonnegative, satisfy $\Fw(x,\cdot),\Gw(z,\cdot)\in C^2(\R)$ for almost all $x\in\Om$, $z\in\Ga$, and satisfy $\Fw(\cdot,s),\Gw(\cdot,s)\in L^\infty(\R)$ for all $s\in\R$.
The derivatives with respect to the second variable are denoted by $\Fw'$, $\Fw''$, $\Gw'$ and $\Gw''$. We assume that
there exist constants $c_*,c_\cg >0$ such that for all $s\in \R$, and
almost all $x\in\Om$, $z\in\Ga$, 
\begin{align}
	\label{EST:FGW}
	\Fw''(x,s) + a_* \ge c_*
	\quad\text{and}\quad
	\Gw''(z,s) + a_\cg \ge c_\cg.
\end{align}
Moreover, we assume that there exist $\alpha_{\Fw'},\alpha_{\Gw'},\gamma_{\Fw'},\gamma_{\Gw'}>0$ and $\delta_{\Fw'},\delta_{\Gw'}\ge 0$ as well as exponents $p,q \in \R $ with
\begin{align}
\label{ASS:PQ}
\left\{
	\begin{aligned}
		p > 2 &\;\text{ and }\; q \ge 2 &&\text{ if }\; d=2, \\
		p > 3 &\;\text{ and }\; q > 2 &&\text{ if }\; d=3,
	\end{aligned}
\right.
\qquad
\pp=\frac{p}{p-1}, \qquad
\qp=\frac{q}{q-1}
\end{align}
such that for all $s\in\R$ and almost all $x\in\Om$, $z\in\Ga$,
\begin{subequations}
	\label{ass:growth:w}
	\begin{alignat}{3}
	\label{ass:growth:1a}
	\alpha_{\Fw'} \abs{s}^{p-1} - \delta_{\Fw'} &\le \abs{\Fw'(x,s)} &&\le \gamma_{\Fw'}(1 + \abs{s}^{p-1}), \\
	\label{ass:growth:1b}
	\alpha_{\Gw'} \abs{s}^{q-1} - \delta_{\Gw'} &\le \abs{\Gw'(z,s)} &&\le \gamma_{\Gw'}(1 + \abs{s}^{q-1}).
	\end{alignat}
	As a consequence, there exist constants $\alpha_{\Fw},\alpha_{\Gw},\gamma_{\Fw},\gamma_{\Gw}>0$ and $\delta_{\Fw},\delta_{\Gw}\ge 0$ such that 
	\begin{alignat}{3}
	\label{ass:growth:2a}
	\alpha_{\Fw} \abs{s}^{p} - \delta_{\Fw} &\le \Fw(x,s) &&\le \gamma_{\Fw}(1 + \abs{s}^{p}), \\
	\label{ass:growth:2b}
	\alpha_{\Gw} \abs{s}^{q} - \delta_{\Gw} &\le \Gw(z,s) &&\le \gamma_{\Gw}(1 + \abs{s}^{q}),
	\end{alignat}	
	for all $s\in\R$ and almost all $x\in\Om$, $z\in\Ga$. 
\end{subequations}
\item  \label{ass:pot:conv}
We define the potentials $F$ and $G$ by
\begin{align}
\label{EST:FG:1}
	F(x,s) := \Fw(x,s) + \frac 1 2 a_\Omega(x) s^2
	\quad\text{and}\quad
	G(z,s) := \Gw(z,s) + \frac 1 2 a_\Gamma(z) s^2
\end{align}	
for all $s\in\R$ and almost all $x\in\Om$ and $z\in\Gamma$. In view of $a_\Om\in L^\infty(\Omega)$ and $a_\Ga\in L^\infty(\Gamma)$ we conclude that for almost every fixed $x\in\Om$ and $z\in\Ga$, the functions $F(x,\cdot)$ and $G(z,\cdot)$ are twice continuously differentiable. 

Moreover, we write
\begin{alignat*}{3}
	F'(x,s) &= \Fw'(x,s) + \ao(x) s, 
	&&\quad F''(x,s) &&= \Fw''(x,s) + \ao(x), \\
	G'(z,s) &= \Gw'(z,s) + \ag(z) s, 
	&&\quad G''(z,s) &&= \Gw''(z,s) + \ag(z), 
\end{alignat*}
for all $s\in\R$ and almost all $x\in \Om$ and $z\in\Ga$ to denote the derivatives of $F(x,\cdot)$ and $G(z,\cdot)$ with respect to the second variable. It follows from \eqref{EST:FGW} that for all $s\in\R$ and almost all $x\in \Om$ and $z\in\Ga$,
\begin{align}
	\label{EST:FG:2}
	F''(x,s) \ge \Fw''(x,s) + a_* \ge c_*
	\quad\text{and}\quad
	G''(z,s) \ge \Gw''(z,s) + a_\cg \ge c_\cg
\end{align}	
meaning that $F(x,\cdot)$ and $G(z,\cdot)$ are uniformly convex for almost all $x\in \Om$ and $z\in\Ga$. Since $\ao\in L^\infty(\Omega)$, $\ag\in L^\infty(\Gamma)$ and $p,q\ge2$ (according to \eqref{ASS:PQ}), we infer from \eqref{ass:growth:w} that there exist $\alpha_{F'},\alpha_{G'},\alpha_{F},\alpha_{G},\gamma_{F'},\gamma_{G'},\gamma_{F},\gamma_{G}>0$ and $\delta_{F'},\delta_{G'},\delta_{F},\delta_{G}\ge 0$ such that 
\begin{subequations}
	\label{ass:growth}
	\begin{alignat}{3}
	\alpha_{F'} \abs{s}^{p-1} - \delta_{F'} &\le |F'(x,s)| &&\le \gamma_{F'}(1 + \abs{s}^{p-1}), \\
	\alpha_{G'} \abs{s}^{q-1} - \delta_{G'} &\le | G'(z,s) | &&\le \gamma_{G'}(1 + \abs{s}^{q-1}), \\
	\alpha_{F} \abs{s}^{p} - \delta_{F} &\le \,\,\, F(x,s) &&\le \gamma_{F}(1 + \abs{s}^{p}), \\
	\alpha_{G} \abs{s}^{q} - \delta_{G} &\le \,\,\, G(z,s) &&\le \gamma_{G}(1 + \abs{s}^{q}),
	\end{alignat}	
	for all $s\in\R$ and almost all $x\in\Om$ and $z\in\Ga$.
\end{subequations}

\item \label{ass:pen}
We suppose that $B\in L^\infty(\Gamma\times\R)$ with $B(z,\cdot)\in C^1(\R)$ for almost all $z\in\Ga$ and we write $B'$ to denote the derivative with respect to the second variable.
Moreover, we assume that the following holds:
\begin{enumerate}[label=$(\mathrm{A 5.\arabic*})$, ref = $\mathrm{A 5.\arabic*}$]
\item \label{ass:pen:1} There exist constants $\alpha_B,\alpha_{B'},\gamma_B,\gamma_{B'} > 0$ with 
$\alpha_B<\min\{\alpha_{G},\alpha_{\Gw}\}$ such that 
\begin{align}
	\label{EST:PEN}
	\abs{B(z,s)} \le \alpha_B\abs{s}^q + \gamma_B,
	\quad\text{and}\quad
	\abs{B'(z,s)} \le \alpha_{B'}\abs{s}^{q-1} + \gamma_{B'}
\end{align}
for all $s\in\R$ and almost all $z\in\Gamma$ where $q$ is the exponent from \eqref{ass:growth:w}.
\item \label{ass:pen:2} For any sequence $\psi_k\wto \psi$ in $L^q(\Gamma)$, it holds that
\begin{align*}
	\intG B(\cdot,\psi) \dS &\le \underset{k\to\infty}{\lim\inf}\; \intG B(\cdot,\psi_k) \dS, \\
	\intG B'(\cdot,\psi) \theta \dS &= \underset{k\to\infty}{\lim}\; \intG B'(\cdot,\psi_k) \theta \dS,
\end{align*}
for all test functions $\theta\in L^{q}(\Ga)$. Note that $B'(\cdot,\psi)\in L^\qp(\Ga)$ for all $\psi\in L^q(\Ga)$ due to the growth condition on $B'$ in \eqref{ass:pen:1}.
\item \label{ass:pen:uniq} We assume that $B'$ is Lipschitz continuous with respect to its second argument in the following sense: There exists a constant $0\le \LIP<c_\cg$ such that 
\begin{align*}
\abs{B'(z,r)-B'(z,s)} \le \LIP \abs{r-s}
\end{align*}
for all $r,s\in\R$ and almost all $z\in\Ga$.
\end{enumerate}
\end{enumerate}

\medskip

\paragraph{Additional assumptions for higher regularity.}
\begin{enumerate}[label=$(\mathrm{A \arabic*})$, ref = $\mathrm{A \arabic*}$, start=6]
\item \label{ass:dom:strong} We assume that the boundary $\Gamma$ is of class $C^2$. 
\item \label{ass:pot:strong}
We suppose that there exist constants $\gamma_{F''},\gamma_{G''} >0$ such that
\begin{alignat*}{3}
F''(x,s) &\leq \gamma_{F''} (1+|s|^{p-2}), \quad
G''(z,s) &\leq \gamma_{G''} (1+|s|^{q-2})
\end{alignat*}
for all $s\in\R$ and almost all $x\in\Om$ and $z\in\Ga$.
\item \label{ass:pen:strong}
We assume that $B(z,\cdot)\in C^2(\R)$ for almost all $z\in\Ga$ and that there exists a constant $0<\gamma_{B''}<c_\cg$ such that 
\begin{align*}
	\abs{B''(z,s)} \le \gamma_{B''}
\end{align*}
for all $s\in\R$ and almost all $z\in\Ga$. In this case, we can of course choose $\LIP=\gamma_{B''}$.

\end{enumerate}
	
\medskip

\begin{rem}	
	\begin{enumerate}[label=$(\mathrm{\alph*})$, ref = $\mathrm{\alph*}$]
	\item The assumptions on the kernel $J$ in \eqref{ass:kernel} are very typical in the literature (cf. \cite{davoli-scarpa-trussardi-W11}).
	Sometimes the apparently weaker requirement $J\in W^{1,1}_\text{loc}(\R^d)$ is prescribed (see, e.g., \cite{GGG}). However, any kernel $\tilde J\in W^{1,1}_\text{loc}(\R^d)$ can always be multiplied by a compactly supported cut-off function $\rho\in C^1(\R^d) $ with $\rho=1$ in $\ov{B_{2R}(0)}$,
	where $R$ is the radius from \eqref{ass:dom}. 
	Then $J:=\tilde J \rho \in W^{1,1}(\R^d)$, and
	since $x-y\in\ov{B_{2R}(0)}$ for all $x,y\in\ov\Om$, 
	it holds that $J*\phi = \tilde J * \phi$ $a.e.$ in $\Om$.
	For that reason, it is not a real restriction to demand $J\in W^{1,1}(\R^d)$ instead of $J\in W^{1,1}_\text{loc}(\R^d)$ in \eqref{ass:kernel}.
	Of course, we can argue analogously for the assumptions on $K$.
	\item As a direct consequence of \eqref{DEF:ASUP} and \eqref{DEF:BSUP}, we conclude that
	for all functions $\phi\in L^1(\Omega)$ and $\psi\in L^1(\Ga)$, it holds that
	$J\ast\phi\in W^{1,1}(\Om)$ and $K\cg\psi\in W^{1,1}(\Ga)$ with 
	\begin{subequations}
		\begin{alignat}{3}
		\label{EST:J}	
		\Vert J\ast\phi\Vert_{L^1(\Om)}
		&\leq a^\ast\,\Vert\phi\Vert_{L^1(\Om)}\,,
		\\
		\label{EST:K}
		\Vert K\cg\psi\Vert_{L^1(\Ga)}
		&\leq a^\cg\,\Vert\psi\Vert_{L^1(\Ga)}\,,
		\\
		\label{EST:DJ}
		\Vert \nabla (J\ast\phi) \Vert_{L^1(\Om)} =\,  
		\Vert (\nabla J)\ast\phi\Vert_{L^1(\Om)}
		&\leq b^*\,\Vert\phi\Vert_{L^1(\Om)}\,,
		\\
		\label{EST:DK}
		\Vert \gradg ( K\cg\psi ) \Vert_{L^1(\Ga)} =\,
		\Vert ( \gradg K )\cg\psi\Vert_{L^1(\Ga)}
		&\leq b^\cg\,\Vert\psi\Vert_{L^1(\Ga)}
		.	
		\end{alignat}
	\end{subequations}
	\end{enumerate}
	\label{rem:JK}
\end{rem}

\subsection{Special spaces, products and norms}
\begin{enumerate}[label=$(\mathrm{P \arabic*})$, ref = $\mathrm{P \arabic*}$]
\item \label{pre:space} Let $\beta\neq 0$, $m\in\R$, and let $k$ denote any integer. We define the spaces
\begin{alignat*}{3}
	\VV^k &:= \left\{ \zeta \in H^k(\Omega) \,:\, \zeta\vert_\Gamma \in H^k(\Gamma)  \right\}
	&&\quad\text{for}\; k\ge 1,\\
	\HHBM &:= \left\{(\zeta,\xi)\in \HH^k \,:\, \beta\abs{\Omega}\mean{\zeta}_\Om + \abs{\Gamma}\mean{\xi}_\Ga = m\right\}
	&&\quad\text{for}\; k\ge 0.
\end{alignat*}
Note that the space $\VV^k$, endowed with the inner product
\begin{align*}
	(\phi,\zeta)_{\VV^k} := (\phi,\zeta)_{H^k(\Om)} + (\phi,\zeta)_{H^k(\Ga)}
\end{align*}
and its induced norm is a Hilbert space. 

For any $L \ge 0$ and $\beta \neq 0$ we introduce the bilinear form
\begin{align*}
\begin{aligned}
&\bigscp{(\mu,\nu)}{(\zeta,\xi)}_{L,\beta}  \\ 
&\quad:= 
\begin{cases}
	\displaystyle
	\intO \nabla \mu \cdot \nabla \zeta \dx + \intG \gradg \nu \cdot \gradg \xi + \frac{1}{L}(\beta \nu-\mu)(\beta\xi-\zeta) \dS, &\text{if}\; L>0,\\[2ex]
	\displaystyle
	\intO \nabla \mu \cdot \nabla \zeta \dx + \intG \gradg \nu \cdot \gradg \xi \dS, &\text{if}\; L=0,
\end{cases}
\end{aligned}
\end{align*}
for all $(\mu,\nu), (\zeta,\xi) \in \HH^1$. 
It is straightforward to check that
$\scp{\cdot}{\cdot}_{L,\beta}$ defines an inner product on $\HH^1_{\beta,0}$ if $L>0$ and $\beta\neq 0$, and $\scp{\cdot}{\cdot}_{0,\beta}$ defines an inner product on $\HH^1_{\beta,0}$ if $L=0$ and $\beta>0$.
The induced norm is given by $\norm{\,\cdot\,}_{L,\beta}:= \scp{\cdot}{\cdot}_{L,\beta}^{1/2}$. 
Note that the dual space of $\HH^1$ is given as
\begin{align*}
(\HH^1)' &= H^1(\Omega)' \times H^1(\Gamma)', \quad
\end{align*}
We further define the subspace
\begin{align*}
	\HH_{\beta,0}^{-1} = \left\{(\zeta,\xi)\in (\HH^1)' \,:\, \beta\abs{\Omega}\mean{\zeta}_\Om + \abs{\Gamma}\mean{\xi}_\Ga = 0 \right\} \subset (\HH^1)'.
\end{align*}

\item \label{pre:D}
For $\beta>0$, we introduce the subspace
\begin{align*}
	\DDBI := \left\{ (\zeta,\xi) \in \HH^1 \,:\, \zeta\vert_\Ga = \beta\xi \;\; a.e.\;\text{on}\; \Ga \right\} \subset \HH^1.
\end{align*}
Endowed with the inner product
\begin{align*}
	\scp{(\phi,\psi)}{(\zeta,\xi)}_{\DDBI} 
		:= \scp{\phi}{\zeta}_{H^1(\Omega)} + \scp{\psi}{\xi}_{H^1(\Gamma)},
	\quad (\phi,\psi), (\zeta,\xi) \in \DDBI
\end{align*}
and its induced norm $\norm{\cdot}_{\DDBI}:=\scp{\cdot}{\cdot}_{\DDBI}^{1/2}\,$, the space $\DDBI$ is a Hilbert space. We further define the bilinear form
\begin{align*}
\biginn{(\phi,\psi)}{(\zeta,\xi)}_{\DDBI} := \scp{\phi}{\zeta}_{L^2(\Omega)} + \scp{\psi}{\xi}_{L^2(\Gamma)}
\end{align*}
for all pairs $(\phi,\psi), (\zeta,\xi) \in \LL^2$. By means of the Riesz representation theorem, this product can be extended to a duality pairing on $(\DDBI)'\times \DDBI$, which will also be denoted by $\inn{\cdot}{\cdot}_{\DDBI}$. This means that for all $(\phi,\psi)\in (\HH^1)'\subset (\DDBI)'$ and $(\zeta,\xi)\in \DDBI \subset \HH^1$,
\begin{align*}
	\biginn{(\phi,\psi)}{(\zeta,\xi)}_{\DDBI} = \inn{\phi}{\zeta}_{H^1(\Omega)} + \inn{\psi}{\xi}_{H^1(\Gamma)}
\end{align*}  
In particular, the spaces $\big(\DDBI,\LL^2,(\DDBI)'\big)$ form a Gelfand triplet, and the operator norm on $(\DDBI)'$ is given by
\begin{align*}
\norm{\phi}_{(\DDBI)'} = \sup\big\{\, \big|\inn{\phi}{\zeta}_{\DDBI}\big| \,:\, \zeta\in\DDBI \text{ with } \norm{\zeta}_{\DDBI} = 1 \big\}
\quad \text{ for all } \phi\in(\DDBI)'.
\end{align*}
We further point out that the mapping
\begin{align*}
	\mathfrak{I}:\VV^1\to\DDBI,\quad \rho \mapsto (\beta\rho,\rho)
\end{align*}
is an isomorphism.

\item \label{pre:S} Let $L>0$ and $(\phi,\psi)\in \HH_{\beta,0}^{-1}$ be arbitrary. 
According to \cite[Thm.~3.3]{knopf-liu}, there exists a unique weak solution $\SS(\phi,\psi):=(\SS_\Om(\phi,\psi), \SS_\Ga(\phi,\psi)) \in \HHBI$ to the elliptic system
\begin{align*}
	\left\{
	\begin{aligned}
	- \Lap \SS_\Om &= - \phi && \text{ in } \Omega, \\
	- \Lapg \SS_\Ga + \beta \deln \SS_\Om &= - \psi && \text{ on } \Gamma, \\
	L\deln \SS_\Om &= (\beta \SS_\Ga - \SS_\Om) && \text{ on } \Gamma.
	\end{aligned}
	\right.
\end{align*}
This means that
\begin{align}
\label{WF:S}
	\bigscp{\SS(\phi,\psi)}{(\zeta,\xi)}_{L,\beta} = - \biginn{(\phi,\psi)}{(\zeta,\xi)}_{\HH^1} = - \inn{\phi}{\zeta}_{H^1(\Om)} - \inn{\psi}{\xi}_{H^1(\Ga)}
\end{align}
for all $(\zeta,\xi)\in \HHBI$. 
As in \cite[Thm.~3.3 and Cor.~3.5]{knopf-liu}, we can thus define the solution operator
\begin{align*}
	\SS: \HH_{\beta,0}^{-1} \to \HH^1_{\beta,0},\quad (\phi,\psi) \mapsto \SS(\phi,\psi) = (\SS_\Omega(\phi,\psi),\SS_\Gamma(\phi,\psi))  
\end{align*}
as well as an inner product on the space $\HH_{\beta,0}^{-1}$ by 
\begin{align*}
	\bigscp{(\phi,\psi)}{(\zeta,\xi)}_{L,\beta,*} := \bigscp{\SS(\phi,\psi)}{\SS(\zeta,\xi)}_{L,\beta} \quad
	\text{for all}\;\; (\phi,\psi), (\zeta,\xi) \in \HH_{\beta,0}^{-1}
\end{align*}
along with its induced norm 
$$\norm{\cdot}_{L,\beta,*}:=\scp{\cdot}{\cdot}^{1/2}_{L,\beta,*}.$$ 
Because of $\HHBo\subset \HH_{\beta,0}^{-1}$, the product $\scp{\cdot}{\cdot}_{L,\beta,*}$ can also be used as an inner product on $\HHBo$.
Moreover, $\norm{\cdot}_{L,\beta,*}$ is also a norm on $\HHBo$ but $\HHBo$ is not complete with respect to this norm.
\item \label{pre:S:0}
Suppose that $\beta>0$. We define the space
\begin{align*}
	\DD_{\beta}^{-1} &:= \Big\{ (\phi,\psi) \in (\DDBI)' \;:\; \biginn{(\phi,\psi)}{(\beta,1)}_{\DDBI} = 0 \Big\}
	\subset (\DDBI)' .
\end{align*}
Proceeding as in \cite{knopf-liu}, we use Lax--Milgram theorem to show that for any $(\phi,\psi)\in\DD_{\beta}^{-1}$, there exists a unique weak solution $\SD(\phi,\psi) = (\SD_\Omega(\phi,\psi),\SD_\Gamma(\phi,\psi))\in \DDBI \cap \HH^1_{\beta,0}$ to the elliptic problem
\begin{align*}
	\left\{
	\begin{aligned}
		- \Lap \SD_\Om &= - \phi && \text{ in } \Omega, \\
		- \Lapg \SD_\Ga + \beta \deln \SD_\Om &= - \psi && \text{ on } \Gamma, \\
		\SD_\Om\vert_\Ga &= \beta \SD_\Ga && \text{ on } \Gamma.
	\end{aligned}
	\right.
\end{align*}	
This means that $\SD(\phi,\psi)$ satisfies the weak formulation
\begin{align}
	\label{WF:LIN}
	\biginn{ \SD(\phi,\psi) }{ (\zeta,\xi) }_{0,\beta} 
	= - \biginn{(\phi,\psi)}{(\zeta,\xi)}_{\DDBI}
\end{align}
for all test functions $(\zeta,\xi)\in \DDBI$.

Similar to \cite[Thm.~3.3 and Cor.~3.5]{knopf-liu}, we can thus define the solution operator
\begin{align*}
	\SD: \DD_{\beta}^{-1} \to \DDBI \cap \HH^1_{0,\beta},\quad (\phi,\psi) \mapsto \SD(\phi,\psi) = (\SD_\Omega(\phi,\psi),\SD_\Gamma(\phi,\psi))  
\end{align*}
as well as an inner product and its induced norm on the space $\DD_{\beta}^{-1}$ by
\begin{align*}
	\bigscp{(\phi,\psi)}{(\zeta,\xi)}_{0,\beta,*} &:= \bigscp{\SD(\phi,\psi)}{\SD(\zeta,\xi)}_{0,\beta}, \\
	\norm{(\phi,\psi)}_{0,\beta,*} &:=\scp{(\phi,\psi)}{(\phi,\psi)}_{0,\beta,*}^{1/2} 
\end{align*}
for all $(\phi,\psi),(\zeta,\xi)\in \DD_{\beta}^{-1}$.

Since $\HH_{0,\beta}^0 \subset \DD_\beta^{-1}$, the product $\scp{\cdot}{\cdot}_{0,\beta,*}$ can also be used as an inner product on $\HH_{0,\beta}^0$.
Moreover, $\norm{\cdot}_{0,\beta,*}$ is also a norm on $\HH_{0,\beta}^0$ but $\HH_{0,\beta}^0$ is not complete with respect to this norm.

\item \label{pre:N} For any integer $k$, we define the spaces
\begin{alignat*}{2}
	\accentset{\circ} H^k(\Om) &:= \big\{ \varphi \in H^k(\Om) \,:\, \meano{\varphi} = 0 \big\}, \quad
	&\accentset{\circ} H^k(\Ga) &:= \big\{ \varphi \in H^k(\Ga) \,:\, \meang{\varphi} = 0 \big\}, \\
	\accentset{\circ} H^{-1}(\Om) &:=  \big\{ \varphi \in H^1(\Om)' \,:\, \meano{\varphi} = 0 \big\}, \quad
	&\accentset{\circ} H^{-1}(\Ga) &:=  \big\{ \varphi \in H^1(\Ga)' \,:\, \meang{\varphi} = 0 \big\}.
\end{alignat*} 
Let now $\phi\in \accentset{\circ} H^{-1}(\Om)$ and $\psi\in \accentset{\circ} H^{-1}(\Ga)$ be arbitrary. Then the Lax--Milgram theorem guarantees the existence of a unique weak solutions $\NN_\Om(\phi) \in \accentset{\circ} H^1(\Om)$ and $\NN_\Ga(\psi) \in \accentset{\circ} H^1(\Ga)$ to the elliptic systems
\begin{align*}
	\left\{
	\begin{aligned}
		-\Lap \NN_\Om &= -\phi &&\text{in}\;\Om,\\
		\deln \NN_\Om &= 0	&&\text{on}\;\Ga
	\end{aligned}
	\right.
	\quad\text{and}\quad
	-\Lapg \NN_\Ga = -\psi \;\;\text{on}\;\Ga.
\end{align*}
This allows us to define the inner products
\begin{align*}
	\scp{\phi}{\zeta}_{\Om,*} &:= \scp{\grad\NN_\Om(\phi)}{\grad\NN_\Om(\zeta)}_{L^2(\Om)}, 
	\quad \phi,\zeta \in \accentset{\circ} H^{-1}(\Om)\\
	\scp{\psi}{\xi}_{\Ga,*} &:= \scp{\grad\NN_\Ga(\psi)}{\grad\NN_\Ga(\xi)}_{L^2(\Ga)},
	\quad \psi,\xi \in \accentset{\circ} H^{-1}(\Ga)
\end{align*}
on $\accentset{\circ} H^{-1}(\Om)$ and $\accentset{\circ} H^{-1}(\Ga)$, respectively.
Their induced norms 
\begin{align*}
	\norm{\cdot}_{\Om,*} := \scp{\cdot}{\cdot}_{\Om,*}^{1/2}, \quad
	\norm{\cdot}_{\Ga,*} := \scp{\cdot}{\cdot}_{\Ga,*}^{1/2}
\end{align*}
are equivalent to the standard norms on $\accentset{\circ} H^{-1}(\Om)$ and $\accentset{\circ} H^{-1}(\Ga)$, respectively. Because of $\accentset{\circ} H^0(\Om)\subset \accentset{\circ} H^{-1}(\Om)$ and $\accentset{\circ} H^0(\Ga)\subset \accentset{\circ} H^{-1}(\Ga)$, the products $\scp{\cdot}{\cdot}_{\Om,*}$ and $\scp{\cdot}{\cdot}_{\Ga,*}$ as well as the norms $\norm{\cdot}_{\Om,*}$ and $\norm{\cdot}_{\Ga,*}$ can also be used on $\accentset{\circ} H^0(\Om)$ and $\accentset{\circ} H^0(\Ga)$, respectively.

\end{enumerate}

\subsection{Important tools} 

We now present three fundamental lemmata which will be essential for the subsequent approach. The proofs of these lemmata, which are very technical in some parts, can be found in the Appendix.

\begin{lem} \label{LEM:JK:UNIQ}
	Suppose that \eqref{ass:kernel} is satisfied. Then the following holds:
	\begin{enumerate}[label=$(\mathrm{\alph*})$, ref = $\mathrm{\alph*}$]
		\item For all $\phi\in L^2(\Om)$, it holds that $J\ast\phi\in H^1(\Om)$ and there exists a constant $C_J>0$ depending only on $d$ and $J$ such that 
		\begin{align}
			\norm{J\ast\phi}_{H^1(\Om)} \le C_J\, \norm{\phi}_{L^2(\Om)}.
		\end{align}
		\item For all $\psi\in L^2(\Ga)$, it holds that $K\cg\psi\in H^1(\Ga)$ and there exists a constant $C_K>0$ depending only on $d$, $r$ and $K$ such that 
		\begin{align}
			\norm{K\cg\psi}_{H^1(\Ga)} \le C_K\, \norm{\psi}_{L^2(\Ga)}.
		\end{align}
	\end{enumerate}
\end{lem}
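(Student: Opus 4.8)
The plan is to reduce both assertions to $L^2$-bounds for the convolution and for its weak (tangential) gradient. Since $\Om$ and $\Ga$ are bounded we have $L^2(\Om)\subset L^1(\Om)$ and $L^2(\Ga)\subset L^1(\Ga)$, so Remark~\ref{rem:JK} applies and, via \eqref{EST:DJ} and \eqref{EST:DK}, already provides $J\ast\phi\in W^{1,1}(\Om)$ and $K\cg\psi\in W^{1,1}(\Ga)$ together with the identities $\grad(J\ast\phi)=(\grad J)\ast\phi$ and $\gradg(K\cg\psi)=(\gradg K)\cg\psi$. Hence it remains only to upgrade these to $L^2$-bounds on $J\ast\phi$, $(\grad J)\ast\phi$, $K\cg\psi$ and $(\gradg K)\cg\psi$, after which $J\ast\phi\in H^1(\Om)$ and $K\cg\psi\in H^1(\Ga)$ follow immediately.

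For part~(a) I would use a weighted Cauchy--Schwarz inequality that hinges on the nonnegativity of $J$. Splitting $J(x-y)=J(x-y)^{1/2}J(x-y)^{1/2}$ gives $\abs{(J\ast\phi)(x)}^2\le \big(\intO J(x-y)\dy\big)\big(\intO J(x-y)\abs{\phi(y)}^2\dy\big)\le a^\ast\intO J(x-y)\abs{\phi(y)}^2\dy$ by \eqref{DEF:ASUP}. Integrating in $x$, interchanging the order of integration, and using the evenness of $J$ to evaluate $\intO J(x-y)\dx=\ao(y)\le a^\ast$ yields $\norm{J\ast\phi}_{L^2(\Om)}\le a^\ast\norm{\phi}_{L^2(\Om)}$. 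Repeating the computation verbatim with $J$ replaced by the (likewise even) weight $\abs{\grad J}$ and $a^\ast$ replaced by $b^\ast$ from \eqref{DEF:BSUP} bounds $\norm{(\grad J)\ast\phi}_{L^2(\Om)}\le b^\ast\norm{\phi}_{L^2(\Om)}$, and combining the two estimates gives the claim with $C_J=\big((a^\ast)^2+(b^\ast)^2\big)^{1/2}$.

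Part~(b) follows the same weighted Cauchy--Schwarz scheme on $\Ga$, and the $L^2$-bound for $K\cg\psi$ itself is harmless: only the scalar trace of $K(z-\cdot)$ enters, and $a^\cg<\infty$ together with the evenness of $K$ plays exactly the role of $a^\ast$ above. The genuine difficulty, and the main obstacle of the proof, lies in the gradient term $(\gradg K)\cg\psi$. Here $\gradg K(z-y)$ is the tangential part $P_{T_z\Ga}(\grad K)(z-y)$, whose projection depends on the base point $z$; consequently the symmetry between $z$ and $y$ that drove the bulk argument is destroyed, and the assumed finiteness of $b^\cg$ does \emph{not} directly control the integral $\intG\abs{\gradg K(z-y)}\dS(z)$ (with $y$ fixed) that appears after interchanging the order of integration. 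I would resolve this by dominating $\abs{\gradg K(z-y)}\le\abs{(\grad K)(z-y)}$ and establishing the single symmetric bound $\tilde b:=\sup_{w\in\Ga}\intG\abs{(\grad K)(w-\zeta)}\dS(\zeta)<\infty$; since $\abs{\grad K}$ is even, this one quantity bounds \emph{both} Cauchy--Schwarz factors (the second via $\intG\abs{(\grad K)(z-y)}\dS(z)=\intG\abs{(\grad K)(y-\zeta)}\dS(\zeta)\le\tilde b$) and gives $\norm{(\gradg K)\cg\psi}_{L^2(\Ga)}\le\tilde b\,\norm{\psi}_{L^2(\Ga)}$.

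The finiteness of $\tilde b$ is precisely where the hypothesis $K\in W^{2,r}(\R^d)$ with $r>1$ from \eqref{ass:kernel} is needed. Indeed $\grad K\in W^{1,r}(\R^d)$, hence $\abs{\grad K}\in W^{1,r}(\R^d)$, and for each $w\in\Ga$ the shifted and reflected function $\zeta\mapsto\abs{(\grad K)(w-\zeta)}$ again lies in $W^{1,r}(\R^d)$ with $w$-independent norm, because rigid motions preserve the $W^{1,r}$-norm. The trace theorem $W^{1,r}(\Om)\to L^r(\Ga)$, which is available for $r>1$ and guarantees that the trace of a $W^{1,r}$-function onto the Lipschitz surface $\Ga$ belongs to $L^r(\Ga)$, then bounds this trace uniformly in $w$, and an application of Hölder's inequality with exponents $r,r'$ on the bounded surface $\Ga$ converts this into $\tilde b\le\abs{\Ga}^{1/r'}\,C_{\mathrm{tr}}\,\norm{\,\abs{\grad K}\,}_{W^{1,r}(\R^d)}<\infty$, a constant depending only on $d$, $r$ and $K$. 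Feeding $\tilde b$ into the weighted Cauchy--Schwarz estimate and combining with the $L^2$-bound for $K\cg\psi$ yields part~(b) with $C_K=\big((a^\cg)^2+\tilde b^2\big)^{1/2}$.
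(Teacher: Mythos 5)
Your proof is correct, but it takes a genuinely different route from the paper's. For part (a), the paper extends $\phi$ by zero to $\R^d$ and quotes Young's convolution inequality (as in \cite[Thm.~4.2]{lieb-loss}), namely $\norm{\partial^\alpha J\ast\ov\phi}_{L^2(\R^d)}\le C\norm{J}_{W^{1,1}(\R^d)}\norm{\ov\phi}_{L^2(\R^d)}$ for $\abs{\alpha}\le 1$, whereas you run a Schur-test (weighted Cauchy--Schwarz) argument exploiting nonnegativity, evenness and the bounds $a^\ast,b^\ast$ from \eqref{DEF:ASUP}--\eqref{DEF:BSUP}. For part (b), the paper mimics the Lieb--Loss proof of Young's inequality: with $s=2r'/(r'+1)$ it estimates the bilinear form $\intG\intG\psi(z)\,K(z-y)\,\xi(y)\dS(y)\dS(z)$ by a three-function H\"older argument, controls $\norm{K(z-\cdot)}_{L^r(\Ga)}$ uniformly in $z$ via the trace embedding $W^{1,r}(\Om)\emb L^r(\Ga)$, and concludes $K\cg\psi\in L^2(\Ga)$ by duality, treating the gradient ``analogously with the components of $\gradg K$''; you instead reduce once more to a Schur test and secure the transposed kernel bound through the symmetric quantity $\tilde b$, obtained from the same trace embedding plus H\"older on the bounded surface. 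So both proofs hinge on exactly the same key ingredient (the $W^{1,r}$ trace theorem, $r>1$), and neither is more general; what yours buys is elementarity and explicit constants, plus a more careful handling of the one genuinely delicate point, namely the $z$-dependence of the tangential projection in $\gradg K(z-y)$, which breaks the evenness symmetry and makes $b^\cg$ unusable for the transposed Schur factor --- your domination $\abs{\gradg K(z-y)}\le\abs{(\grad K)(z-y)}$ and the passage to $\tilde b$ is precisely the fix that the paper's terse ``componentwise'' remark leaves implicit. What the paper's route buys is the sharper Young-type estimate hidden behind it (a gain of integrability from $L^s$ to $L^{s'}$), of which only the $L^2\to L^2$ case is actually used. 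One cosmetic remark: your constants ($\tilde b$, the trace constant, the factor $\abs{\Ga}^{1/r'}$) depend on the fixed domain and not only on $d$, $r$ and $K$; the same is true of the paper's constants, so nothing is lost.
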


\medskip

\begin{rem} 
	If $p,q,\pp,\qp\in\R$ satisfy \eqref{ASS:PQ}, we have $p,q\ge 2$ and thus, $\pp,\qp\le 2$. Hence, Lemma~\ref{LEM:JK:UNIQ} particularly implies that $J\ast\phi\in W^{1,\pp}(\Om)$, $K\cg\psi\in W^{1,\qp}(\Ga)$ with
	\begin{alignat*}{3}
		\norm{J\ast\phi}_{W^{1,\pp}(\Om)}	
			&\le C \norm{J\ast\phi}_{H^1(\Om)}
			&&\le C \norm{\phi}_{L^2(\Om)}
			&&\le C \norm{\phi}_{L^p(\Om)},\\
		\norm{K\cg\psi}_{W^{1,\qp}(\Ga)}	
			&\le C \norm{K\cg\psi}_{H^1(\Ga)}
			&&\le C \norm{\psi}_{L^2(\Ga)}
			&&\le C \norm{\psi}_{L^q(\Ga)}
	\end{alignat*}
	for generic positive constants denoted by $C$ that may depend on 
	$J$, $K$, $d$ and $r$.
\end{rem}

\medskip	

\begin{lem} \label{LEM:JK}
	Suppose that \eqref{ass:kernel} holds and that $p,q,p',q'\in\R$ satisfy the condition \eqref{ASS:PQ}. Then for all sequences $(\phi_k)_{k\in\N} \subset L^p(\Om)$ and $(\psi_k)_{k\in\N}\subset L^q(\Ga)$ satisfying $\phi_k\wto \phi$ in $L^p(\Om)$ and $\psi_k \wto \psi$ in $L^q(\Ga)$ as $k\to\infty$, it holds that
	\begin{align*}
	(J\ast\phi_k) \to (J\ast\phi) \;\;\text{in}\; L^\pp(\Om)
	\quad\text{and}\quad
	(K\cg\psi_k) \to (K\cg\psi) \;\;\text{in}\; L^\qp(\Ga)
	\end{align*}
	as $k\to\infty$ along a non-relabeled subsequence.
\end{lem}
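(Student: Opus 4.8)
The plan is to prove the two convergence statements separately but by the same argument, so I will focus on the bulk statement $(J\ast\phi_k) \to (J\ast\phi)$ in $L^\pp(\Om)$ and note that the surface statement follows identically by replacing $J$, $\Om$, $\phi$ with $K$, $\Ga$, $\psi$ and using the part (b) bound of Lemma~\ref{LEM:JK:UNIQ}. The core idea is that convolution with $J$ is a compact operator from $L^p(\Om)$ into $L^\pp(\Om)$, so it turns weak convergence into strong convergence. First I would observe that since $\phi_k\wto\phi$ in $L^p(\Om)$, the sequence $(\phi_k)$ is bounded in $L^p(\Om)$ by the uniform boundedness principle, hence also bounded in $L^2(\Om)$ because $p\ge 2$ and $\Om$ is bounded. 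By Lemma~\ref{LEM:JK:UNIQ}(a), the sequence $(J\ast\phi_k)$ is therefore bounded in $H^1(\Om)$.

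Next I would exploit compactness of the Sobolev embedding. Since $\Om\subset\R^d$ with $d\in\{2,3\}$ is bounded with Lipschitz boundary, the embedding $H^1(\Om)\emb L^\pp(\Om)$ is compact (indeed $\pp\le 2$ and $H^1$ embeds compactly into every $L^s(\Om)$ with $s$ below the critical Sobolev exponent, which comfortably includes $\pp$). Consequently the bounded sequence $(J\ast\phi_k)$ in $H^1(\Om)$ admits a subsequence converging strongly in $L^\pp(\Om)$ to some limit $g$; along a further subsequence it also converges weakly in $H^1(\Om)$ to $g$. It then remains to identify $g=J\ast\phi$.

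To identify the limit I would test against a dense class. For fixed $x\in\Om$ the map $\phi\mapsto (J\ast\phi)(x)=\intO J(x-y)\phi(y)\dy$ is, since $J(x-\cdot)\in L^\infty(\Om)\subset L^\pp(\Om)$, a continuous linear functional on $L^p(\Om)$; hence $\phi_k\wto\phi$ gives pointwise convergence $(J\ast\phi_k)(x)\to(J\ast\phi)(x)$ for every $x\in\Om$. On the other hand, strong $L^\pp$ convergence of the subsequence to $g$ yields a.e.\ convergence along a sub-subsequence. Matching the two pointwise limits forces $g=J\ast\phi$ a.e.\ in $\Om$, so the whole relevant subsequence converges strongly in $L^\pp(\Om)$ to $J\ast\phi$. (Alternatively, one identifies the weak $H^1$-limit by a duality argument, but the pointwise route is cleaner.)

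The main obstacle is not the compactness itself but the bookkeeping of subsequences together with the identification of the limit: a priori the compactness argument only produces a convergent subsequence, which is exactly why the statement is phrased ``along a non-relabeled subsequence''. One must ensure the limit is genuinely $J\ast\phi$ and not some spurious cluster point, and the pointwise-convergence identification above is the key device that pins it down; once the limit is known, a standard subsequence-of-every-subsequence argument would in fact upgrade the conclusion to convergence of the full sequence, though the lemma only claims subsequential convergence. The surface case carries an additional subtlety in that the compact embedding $H^1(\Ga)\emb L^\qp(\Ga)$ on the $(d-1)$-dimensional Lipschitz manifold $\Ga$ must be invoked, but this is again standard for the regularity of $\Ga$ assumed in \eqref{ass:dom}, and the requisite $H^1(\Ga)$ bound is furnished by Lemma~\ref{LEM:JK:UNIQ}(b).
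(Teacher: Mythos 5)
Your overall strategy---use the $H^1$ bounds of Lemma~\ref{LEM:JK:UNIQ} together with a compact Sobolev embedding to upgrade weak to strong convergence, then identify the limit---is sound and runs parallel to the paper's proof, which instead bounds $(J\ast\phi_k)$ in $W^{1,1}(\Om)$ via Remark~\ref{rem:JK} and uses the compact embedding $W^{1,1}(\Om)\emb L^{\pp}(\Om)$. However, your identification step contains a genuine error: the claim that $J(x-\cdot)\in L^\infty(\Om)$ is false under assumption \eqref{ass:kernel}. The hypothesis is only $J\in W^{1,1}(\R^d)$, which does not embed into $L^\infty$ for $d\in\{2,3\}$, and the paper explicitly admits unbounded kernels such as $J(x)=\rho(\abs{x})\abs{x}^{-\omega}$ with $0<\omega<d-1$, and Riesz potentials (Lemma~\ref{LEM:KERNEL}, Remark~\ref{REM:KERNEL}). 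In the bulk your argument can be repaired: by the Sobolev embedding $W^{1,1}(\R^d)\emb L^{d/(d-1)}(\R^d)$ one has $J(x-\cdot)\vert_\Om\in L^{d/(d-1)}(\Om)\subset L^{\pp}(\Om)$ for every $x$, because \eqref{ASS:PQ} forces $p>d$, i.e.\ $\pp<d/(d-1)$, and $\Om$ is bounded; this restores the continuity of the evaluation functional $\phi\mapsto(J\ast\phi)(x)$ on $L^p(\Om)$ and hence the pointwise convergence you need.

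The surface case, however, does not ``follow identically,'' and this is where the gap is most serious. Your pointwise identification there requires $K(z-\cdot)\vert_\Ga\in L^{\qp}(\Ga)$ for fixed $z\in\Ga$, i.e.\ an integrability statement for the \emph{trace} of $K$ on the $(d-1)$-dimensional manifold $\Ga$, which is not a consequence of the bulk reasoning. For $d=3$ it needs the full strength of $K\in W^{2,r}(\R^d)$: a trace argument giving $K(z-\cdot)\vert_\Ga\in W^{1,r}(\Ga)$, followed by the Sobolev embedding $W^{1,r}(\Ga)\emb L^{\qp}(\Ga)$ on the two-dimensional surface, valid for every $r>1$ since $\qp\le 2$ (for $d=2$ one instead uses $W^{2,r}(\R^2)\emb L^\infty(\R^2)$). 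This is precisely why \eqref{ass:kernel} imposes higher regularity on $K$ than on $J$. Alternatively, you can bypass pointwise evaluation altogether and identify the limit as the paper does: for test functions $\zeta\in L^p(\Om)$ and $\xi\in L^q(\Ga)$, the integral functionals $\phi\mapsto\intO(J\ast\phi)\zeta\dx$ and $\psi\mapsto\intG(K\cg\psi)\xi\dS$ are continuous on $L^p(\Om)$ and $L^q(\Ga)$ by the estimates of Remark~\ref{rem:JK} and the embeddings $W^{1,1}(\Om)\emb L^{\pp}(\Om)$, $W^{1,1}(\Ga)\emb L^{\qp}(\Ga)$; weak convergence of $(\phi_k,\psi_k)$ and strong $L^{\pp}$/$L^{\qp}$ convergence of the convolutions then pin down the limits as $J\ast\phi$ and $K\cg\psi$ without ever requiring integrability of kernel slices.
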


\medskip

\begin{lem}
	\label{LEM:NORMS}
	Let $\beta\neq 0$, $L> 0$ be arbitrary.
	Then, there exists a positive constant $C$ that may depend on $\beta$ but not on $L$ such that 
	\begin{subequations}
	\begin{alignat}{2}
	\label{LEM:EST:1}
	\norm{\bph}_{(\HH^1)'} 
	&\leq 
	C\left( 1 + \tfrac{1}{\sqrt{L}} \right) \norm{\bph}_{L,\beta, *} 
	&&\quad\text{for all $\bph\in\HH_{\beta,0}^{-1}$ if $L>0$,}\\[1ex]
	\label{LEM:EST:2}
	\norm{\bph}_{(\HH^1)'} &\leq C \norm{\SD(\bph)}_{0,\beta} = C \norm{\bph}_{0,\beta, *}
	&&\quad\text{for all $\bph\in\DD_\beta^{-1}$ if $L=0$.} 
	\end{alignat}
	\end{subequations}
\end{lem}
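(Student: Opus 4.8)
The plan is to establish both estimates by exploiting the variational characterizations \eqref{WF:S} and \eqref{WF:LIN} of the solution operators $\SS$ and $\SD$, combined with the definition of the dual norm on $(\HH^1)'$ and a Poincar\'e-type control of the full $\HH^1$-norm by the bilinear forms $\scp{\cdot}{\cdot}_{L,\beta}$ and $\scp{\cdot}{\cdot}_{0,\beta}$.

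For \eqref{LEM:EST:1}, fix $\bph=(\phi,\psi)\in\HH_{\beta,0}^{-1}$ and let $(\zeta,\xi)\in\HH^1$ be an arbitrary test pair. First I would split off the mean: writing $m:=\beta\abs{\Omega}\mean{\zeta}_\Om+\abs{\Gamma}\mean{\xi}_\Ga$, I replace $(\zeta,\xi)$ by its mean-corrected version so that the corrected pair lies in $\HHBI$ and may be inserted into \eqref{WF:S}; since $\bph\in\HH_{\beta,0}^{-1}$ annihilates the constants $(\beta,1)$, the duality pairing $\inn{\phi}{\zeta}_{H^1(\Om)}+\inn{\psi}{\xi}_{H^1(\Ga)}$ is unchanged by this correction. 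Then \eqref{WF:S} gives
\begin{align*}
\abs{\inn{\phi}{\zeta}_{H^1(\Om)}+\inn{\psi}{\xi}_{H^1(\Ga)}}
=\abs{\bigscp{\SS(\bph)}{(\zeta,\xi)_{\text{corr}}}_{L,\beta}}
\le \norm{\SS(\bph)}_{L,\beta}\,\norm{(\zeta,\xi)_{\text{corr}}}_{L,\beta},
\end{align*}
by Cauchy--Schwarz for the inner product $\scp{\cdot}{\cdot}_{L,\beta}$. Now $\norm{\SS(\bph)}_{L,\beta}=\norm{\bph}_{L,\beta,*}$ by definition, so it remains to bound $\norm{(\zeta,\xi)_{\text{corr}}}_{L,\beta}$ by $C(1+L^{-1/2})\norm{(\zeta,\xi)}_{\HH^1}$. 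Expanding the penalization term and using $\tfrac1L\abs{\beta\xi-\zeta}^2\le \tfrac{C}{L}(\abs{\xi}^2+\abs{\zeta}^2)$ together with the trace theorem produces the factor $L^{-1/2}$, while the gradient terms contribute the constant $1$; the mean-correction costs only a $\beta$-dependent constant via Poincar\'e--Wirtinger. Taking the supremum over $\norm{(\zeta,\xi)}_{\HH^1}=1$ yields \eqref{LEM:EST:1}.

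For \eqref{LEM:EST:2} the argument is cleaner because the limiting form $\scp{\cdot}{\cdot}_{0,\beta}$ has no penalization term. Here I work in the Gelfand triple $(\DDBI,\LL^2,(\DDBI)')$ and use that $\SD$ maps $\DD_\beta^{-1}$ into $\DDBI\cap\HH^1_{\beta,0}$. For $\bph\in\DD_\beta^{-1}$ and a test pair $(\zeta,\xi)\in\HH^1$, I again subtract the appropriate mean so the corrected pair satisfies the Dirichlet-type constraint $\zeta\vert_\Ga=\beta\xi$ and thus lies in $\DDBI$; applying \eqref{WF:LIN} and Cauchy--Schwarz gives $\abs{\inn{\bph}{(\zeta,\xi)}}\le\norm{\SD(\bph)}_{0,\beta}\norm{(\zeta,\xi)_{\text{corr}}}_{0,\beta}$, and the latter factor is controlled by $C\norm{(\zeta,\xi)}_{\HH^1}$ using only the gradient terms and Poincar\'e. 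Since $\norm{\SD(\bph)}_{0,\beta}=\norm{\bph}_{0,\beta,*}$ by definition, taking the supremum over unit test pairs finishes the proof.

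The main obstacle is the mean-value correction step: in both cases the test pair from the definition of the $(\HH^1)'$-norm is a general element of $\HH^1$, whereas \eqref{WF:S} and \eqref{WF:LIN} are only valid for test functions in the constrained spaces $\HHBI$ and $\DDBI$. Carrying out the projection onto these constraint spaces while keeping the duality pairing invariant (which uses precisely that $\bph$ kills the relevant constants) and tracking the resulting Poincar\'e constants---uniformly in $L$ for \eqref{LEM:EST:1}---is the delicate part. The explicit dependence $C(1+L^{-1/2})$ must be read off carefully from the penalization term, since this $L$-dependence is exactly what degenerates as $L\to0$ and motivates the separate treatment of the $L=0$ case in \eqref{LEM:EST:2}.
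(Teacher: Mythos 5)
Your argument for \eqref{LEM:EST:1} is correct and is essentially the paper's own proof: shift the test pair $(\zeta,\xi)\in\HH^1$ by constants $(\beta c,c)$ so that it lands in $\HHBI$, note that the duality pairing with $\bph$ is unchanged because $\bph$ annihilates $(\beta,1)$, then apply \eqref{WF:S}, Cauchy--Schwarz for $\scp{\cdot}{\cdot}_{L,\beta}$, and the trace theorem to extract the factor $C\left(1+L^{-1/2}\right)$. One simplification: both the gradients and the combination $\beta\xi-\zeta$ are invariant under this shift, so the corrected pair has \emph{exactly} the same $\norm{\cdot}_{L,\beta}$-norm as $(\zeta,\xi)$; no Poincar\'e--Wirtinger inequality is needed anywhere.

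Your argument for \eqref{LEM:EST:2}, however, has a genuine gap at the correction step. You assert that subtracting a suitable mean makes a general pair $(\zeta,\xi)\in\HH^1$ satisfy the constraint $\zeta\vert_\Ga=\beta\xi$. That cannot work: under any shift $(\zeta,\xi)\mapsto(\zeta-\beta c,\xi-c)$ the defect $\zeta\vert_\Ga-\beta\xi$ is unchanged, so no choice of $c$ places the pair in $\DDBI$ unless it was there to begin with. Membership in $\DDBI$ is a pointwise trace constraint, not a mean-value constraint, and this is precisely the structural difference between the two cases: for $L>0$ the test space $\HHBI$ of \eqref{WF:S} differs from $\HH^1$ only by a mean condition, which a constant shift removes, whereas for $L=0$ the test space $\DDBI$ of \eqref{WF:LIN} differs by a trace condition, which it does not. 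Since \eqref{WF:LIN} cannot be applied to your ``corrected'' pair, what your argument actually proves is only $\abs{\inn{\bph}{\bet}_{\DDBI}}\le C\norm{\bph}_{0,\beta,*}\norm{\bet}_{\HH^1}$ for $\bet\in\DDBI$, i.e.\ a bound on the $(\DDBI)'$-norm of $\bph$, not on $\norm{\bph}_{(\HH^1)'}$.

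This gap is substantive rather than technical, because $\SD(\bph)$ encodes only the restriction of $\bph$ to $\DDBI$, and that restriction does not control the action of $\bph$ on all of $\HH^1$. Concretely, fix $\psi\in H^1(\Ga)$ with $\intG\psi\dS=0$ and let $\phi_\eps\in L^2(\Om)$ be a boundary layer of width $\eps$ approximating $-\tfrac{1}{\beta}\psi$ times the surface measure; for test pairs with $\theta\vert_\Ga=\beta\sigma$ the bulk and surface contributions cancel up to a trace-type error of order $\eps^{1/2}\norm{(\theta,\sigma)}_{\DDBI}$, so $\norm{(\phi_\eps,\psi)}_{0,\beta,*}\to0$, whereas testing with $(0,\psi)$ shows that $\norm{(\phi_\eps,\psi)}_{(\HH^1)'}$ stays bounded away from zero. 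Thus the passage from $\DDBI$-duality to $(\HH^1)'$-duality is exactly the content at stake and cannot be obtained by any correction argument of the kind you propose. For what it is worth, the paper itself disposes of this case with the words ``completely analogously,'' which conceals the same difficulty: the honest analogue of the $L>0$ argument is the $(\DDBI)'$-bound described above.
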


\subsection{Examples for admissible kernels, potentials and penalty functions}

\paragraph{Examples for admissible kernels $J$ and $K$.} 
Since the assumption \eqref{ass:kernel} is rather abstract, we now give concrete examples for admissible singular convolution kernels.

\begin{lem} \label{LEM:KERNEL} 
	\begin{enumerate}[label=$(\mathrm{\alph*})$, ref = $\mathrm{\alph*}$]
		\item Let $0<\omega < d-1$ be arbitrary and let $\rho\in C^1([0,\infty))$ be a positive function decaying sufficiently fast as $s\to\infty$ such that
		\begin{align*}
			s \mapsto |\rho^{(k)}(s)|\, s^{d-1-\omega} \in L^1((0,\infty)), \quad k\in\{0,1\}, 
		\end{align*}
		where $\rho^{(k)}$ denotes the derivative of $\rho$ of order $k$.
		Then the kernel $J$ defined by
		\begin{align*}
		J(x) := \rho(\abs{x})\, \abs{x}^{-\omega}
		\quad\text{for all}\;\; x\in\R^d\setminus\{0\}
		\end{align*}
		belongs to $W^{1,1}(\R^d)$ and satisfies the condition \eqref{ass:kernel} with
		\begin{align*}
			a_* = (2R)^{-\omega} \underset{\abs{x}\le 2R}{\min} \; \rho(\abs{x}).
		\end{align*}
		\item Suppose that $d=3$. Let $0<\gamma < d-2 = 1$ be arbitrary and suppose that $1<r<3/(\gamma+2)$.
		Moreover, let $\sigma\in C^2([0,\infty))$ be a positive function decaying sufficiently fast as $s\to\infty$ such that
		\begin{align*}
			s \mapsto |\sigma^{(k)}(s)|^r\, s^{2-r\gamma} \in L^1((0,\infty)), \quad k\in\{0,1,2\}, 
		\end{align*}
		where $\sigma^{(k)}$ denotes the derivative of $\sigma$ of order $k$.
		Then the kernel $K$ defined by
		\begin{align*}
			K(x) := \sigma(\abs{x})\, \abs{x}^{-\gamma}
			\quad\text{for all}\;\; x\in\R^3\setminus\{0\}
		\end{align*}
		belongs to $W^{2,r}(\R^3)$ and satisfies the condition \eqref{ass:kernel} with
		\begin{align*}
			a_\cg = (2R)^{-\gamma} \underset{\abs{x}\le 2R}{\min} \; \sigma(\abs{x}).
		\end{align*}
	\end{enumerate}
\end{lem}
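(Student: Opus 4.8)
The plan is to verify in each case the three requirements encoded in \eqref{ass:kernel}: evenness and nonnegativity, the claimed Sobolev regularity, and the finiteness and (strict) positivity statements collected in \eqref{DEF:A}. Evenness and nonnegativity are immediate in both parts, since $J$ and $K$ depend only on $\abs{x}$ and $\rho,\sigma>0$. For everything else I would pass to polar coordinates, writing an integral of a radial function $g(\abs{x})$ over $\R^d$ as $\abs{\mathbb{S}^{d-1}}\int_0^\infty g(s)\,s^{d-1}\ds$, so that the hypotheses on $\rho$ and $\sigma$ translate directly into integrability of the relevant radial profiles. The sup-type quantities $a^*,a^\cg,b^*,b^\cg$ and the inf-type quantities $a_*,a_\cg$ are then treated separately.

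For part (a), differentiating gives $\grad J(x)=\big(\rho'(\abs{x})\abs{x}^{-\omega}-\omega\,\rho(\abs{x})\abs{x}^{-\omega-1}\big)\tfrac{x}{\abs{x}}$, so in polar coordinates $\norm{J}_{L^1(\R^d)}$ reduces to $\int_0^\infty\rho(s)\,s^{d-1-\omega}\ds$ and $\norm{\grad J}_{L^1(\R^d)}$ is controlled by $\int_0^\infty\abs{\rho'(s)}\,s^{d-1-\omega}\ds+\omega\int_0^\infty\rho(s)\,s^{d-2-\omega}\ds$. The first two are finite by the hypothesis with $k=0,1$; the third is the delicate one, and splitting $(0,\infty)=(0,1)\cup(1,\infty)$ shows it is integrable near the origin \emph{precisely because} $\omega<d-1$, while on $(1,\infty)$ one bounds $s^{d-2-\omega}\le s^{d-1-\omega}$ and reuses the $k=0$ hypothesis. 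This gives $J\in W^{1,1}(\R^d)$. The sup bounds follow for free, since $\intO\abs{J(x-y)}\dy\le\norm{J}_{L^1(\R^d)}$ and $\intO\abs{\grad J(x-y)}\dy\le\norm{\grad J}_{L^1(\R^d)}$ for every $x\in\Om$, whence $a^*,b^*<\infty$. For the lower bound, $x,y\in\Om\subset B_R(0)$ force $\abs{x-y}<2R$, and since $t\mapsto t^{-\omega}$ is decreasing we obtain the pointwise estimate $J(x-y)\ge(2R)^{-\omega}\min_{\abs{\zeta}\le 2R}\rho(\abs{\zeta})>0$; integrating over $\Om$ yields $a_*>0$, with the displayed constant serving as the pointwise lower bound.

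For part (b) with $d=3$, the regularity $K\in W^{2,r}(\R^3)$ is established by the same reduction applied to $K$, $\grad K$ and $\grad^2 K$. Writing $f(s)=\sigma(s)s^{-\gamma}$, one has $\abs{\grad^2 K(x)}\lesssim\abs{f''(\abs{x})}+\abs{f'(\abs{x})}/\abs{x}$, and expanding produces the radial profiles $\sigma''(s)s^{-\gamma}$, $\sigma'(s)s^{-\gamma-1}$ and $\sigma(s)s^{-\gamma-2}$. In $L^r(\R^3)$ the top-order term reduces to $\int_0^\infty\abs{\sigma''(s)}^r s^{2-r\gamma}\ds$ (finite by the $k=2$ hypothesis), while the lower-order terms carry the shifted exponents $2-r(\gamma+1)$ and $2-r(\gamma+2)$. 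The worst term is $\sigma(s)s^{-\gamma-2}$, whose integral is finite near the origin exactly when $2-r(\gamma+2)>-1$, i.e. $r<3/(\gamma+2)$, which is precisely the standing assumption; the tails are again absorbed by comparison with the $k=0$ hypothesis. Thus $K\in W^{2,r}(\R^3)$, and $a_\cg>0$ follows exactly as for $a_*$, using $\abs{z-y}<2R$ for $z,y\in\Ga$.

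The main obstacle will be the two \emph{surface} integrals $a^\cg=\sup_z\intG K(z-y)\dSy$ and $b^\cg=\sup_z\intG\abs{\gradg K(z-y)}\dSy$, since here a singular kernel is integrated over the two-dimensional set $\Ga\subset\R^3$ rather than over the full space, so the $W^{2,r}$ bound does not transfer directly. Using boundedness of $\sigma,\sigma',\sigma''$ on $[0,2R]$ one has, on the relevant range, the pointwise bounds $K(z-y)\le C\abs{z-y}^{-\gamma}$ and $\abs{\gradg K(z-y)}\le\abs{\grad K(z-y)}\le C\abs{z-y}^{-\gamma-1}$, so it suffices to bound $\intG\abs{z-y}^{-\alpha}\dSy$ uniformly in $z$ for $\alpha\in\{\gamma,\gamma+1\}$. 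Covering $\Ga$ by finitely many charts and flattening the surface locally (this is where the $C^2$, or at least Lipschitz, regularity of $\Ga$ enters) reduces this to $\int_{\abs{w}<\rho}\abs{w}^{-\alpha}\,\dd w\simeq\int_0^\rho t^{1-\alpha}\dt$ over a disc in $\R^2$, which is finite iff $\alpha<2$. For $\alpha=\gamma+1$ this is exactly the requirement $\gamma<d-2=1$, which explains the hypothesis on $\gamma$, and the case $\alpha=\gamma$ is then automatic. The careful bookkeeping of these sharp exponents, together with the local flattening argument for the singular surface integrals, is the part I expect to require the most effort.
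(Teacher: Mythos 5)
Your proof is correct, and its core coincides with the paper's (sketched) argument: the positivity of $a_*$ and $a_\cg$ comes from the same pointwise lower bound on $\ov{B_{2R}(0)}\setminus\{0\}$, and the Sobolev regularity from the same reduction to radial integrals, with identical profiles $\sigma''(s)s^{-\gamma}$, $\sigma'(s)s^{-\gamma-1}$, $\sigma(s)s^{-\gamma-2}$ and the identical binding constraint $2-r(\gamma+2)>-1$, i.e.\ $r<3/(\gamma+2)$. The one place where you take a genuinely different route is the verification of the surface conditions $a^\cg,b^\cg<\infty$ in \eqref{DEF:ASUP}--\eqref{DEF:BSUP}, which the paper's proof dismisses as ``easy to check''. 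You prove them directly: pointwise bounds $K(z-y)\le C\abs{z-y}^{-\gamma}$ and $\abs{\gradg K(z-y)}\le\abs{(\grad K)(z-y)}\le C\abs{z-y}^{-\gamma-1}$ for $\abs{z-y}\le 2R$, followed by flattening the Lipschitz boundary in finitely many charts, reducing everything to $\int_0^\rho t^{1-\alpha}\dt$ with $\alpha\in\{\gamma,\gamma+1\}$, finite exactly because $\gamma+1<2$. The mechanism the paper has in mind is different (see the remark following \eqref{ass:kernel} and the proof of Lemma~\ref{LEM:JK:UNIQ}): since $K\in W^{2,r}(\R^3)$ with $r>1$, both $K(z-\cdot)$ and $(\grad K)(z-\cdot)$ lie in $W^{1,r}$, so the trace embedding $W^{1,r}(\Om)\emb L^r(\Ga)$ together with H\"older's inequality on the finite-measure set $\Ga$ gives $\intG K(z-y)\dSy\le C\norm{K}_{W^{1,r}(\R^3)}$ and the analogous bound for $\gradg K$, uniformly in $z$ by translation invariance. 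Your flattening argument is more elementary and makes transparent why $\gamma<d-2$ is the correct threshold for integrability on the two-dimensional surface; the trace argument is shorter given the paper's machinery and explains why $W^{2,r}$ with $r>1$ is the natural regularity assumption on $K$. The details you leave implicit (absence of a singular contribution to the distributional derivatives at the origin, uniformity of the chart constants in $z$) are at the same level of rigor as the paper's own sketch, so there is no gap.
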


\medskip

The proof of Lemma~\ref{LEM:KERNEL} can be found in the Appendix.

\medskip

\begin{rem} 
	\begin{enumerate}[label=$(\mathrm{\alph*})$, ref = $\mathrm{\alph*}$]
		\item Unfortunately, in Lemma~\ref{LEM:KERNEL}, the ``sharp'' cases $\omega = d-1$ in (a) and $\gamma = d-2$ in (b) cannot be included as the kernels would not have the desired regularity.
		Moreover, in the case $d=2$, we cannot allow singular potentials because $W^{2,r}(\R^2)$ is continuously embedded in $L^\infty(\R^2)$ as $r>1$.
		Of course, for both $d=2$ and $d=3$, non-singular potentials such as $K(x) = \sigma(\abs{x}) \abs{x}^\gamma$ with $\gamma\ge 0$ can be handled without any problem, provided that $\sigma(s)$ decays sufficiently fast as $s\to\infty$.
		\item In view of Remark~\ref{rem:JK}(a), let $R>0$ be the radius introduced in \eqref{ass:dom} and suppose that the functions $\rho$ and $\sigma$ in Lemma~\ref{LEM:KERNEL} satisfy 
		\begin{align*}
			\rho,\sigma\in C^\infty([0,\infty)), \quad 
			\rho=\sigma=1 \;\;\text{in}\;\; [0,2R], \quad
			\rho=\sigma=0 \;\;\text{in}\;\; [3R,\infty).
		\end{align*}
		Then, since $x-y\in \ov{B_{2R}(0)}$ for all $x,y\in\ov\Om$, the values of $(J*\phi)\vert_\Om$ and $(K\cg\psi)\vert_\Ga$ do not depend on the choice of $\rho$ and $\sigma$. 
		
		This means that, applying this trick, the Riesz potentials
		\begin{alignat*}{2}
			J(x) &:= c_\alpha \abs{x}^{\alpha-d} &&\quad\text{for}\;\; x\in\R^d\setminus\{0\}, \; d\in\{2,3\},\;\text{and}\;\; 1<\alpha<d,\\
			K(x) &:= c_\alpha \abs{x}^{\alpha-3} &&\quad\text{for}\;\; x\in\R^3\setminus\{0\}, \; d=3,\;\text{and}\;\; 2<\alpha<3,
		\end{alignat*}
		where $c_\alpha>0$ stands for a positive constant,
		can also be handeled. 
	\end{enumerate}
	\label{REM:KERNEL}
\end{rem}

\medskip

\paragraph{An example for admissible potentials $\Fw$ and $\Gw$.}

\begin{rem}
	\label{EX:POT}
	The smooth double well potential
	\begin{align*}
	W_\text{dw}(s)=\frac 1 4 (s^2-1)^2,\quad s\in\R
	\end{align*} 
	is one of the standard choices for the Cahn--Hilliard equation to model phase segregation processes.
	Let us fix arbitrary functions $f\in L^\infty(\Om)$ and $g\in L^\infty(\Ga)$ satisfying
	\begin{align*}
		f\ge 0 \;\; a.e. \; \text{in}\; \Om, \quad
		g\ge 0 \;\; a.e. \; \text{on}\; \Ga, \quad
		\norm{f}_{L^\infty(\Om)} < a_*, \quad
		\norm{g}_{L^\infty(\Ga)} < a_\cg.
	\end{align*} 
	Then the potentials $\Fw$ and $\Gw$ defined by
	\begin{align*}
		\Fw(x,s) := f(x)\, W_\text{dw}(s), \quad
		\Gw(z,s) := g(z)\, W_\text{dw}(s),
	\end{align*} 
	satisfy the assumptions \eqref{ass:pot:dw}, \eqref{ass:pot:conv} and \eqref{ass:pot:strong} with
	\begin{align*}
		p=q=4, \quad
		c_*:=a_* - \norm{f}_{L^\infty(\Om)}>0, \quad
		c_\cg:=a_\cg - \norm{g}_{L^\infty(\Ga)}>0.
	\end{align*}  
	In most situations, it should be reasonable to assume that $f$ is constant. However, as already discussed in the introduction, it might make sense to use a nonconstant factor $g$ to describe the influence of certain materials on the boundary more precisely.
	
	We also want to point out that singular potentials like the logarithmic potential
	\begin{align}
	\label{POT:LOG}
		W_\text{log}(s)= \frac\vartheta 2 \big((1+s)\ln(1+s) + (1-s)\ln(1-s)\big) 
			- \frac{\vartheta_c}{2} s^2, \quad s\in (-1,1)
	\end{align}
	(for some $0<\vartheta<\vartheta_c$) or the double-obstacle potential
	\begin{align}
	\label{POT:OBST}
	W_\text{obst}(s) = 
	\begin{cases}
	\vartheta(1-s^2), & s\in [-1,1],\\
	\infty, & \text{else},
	\end{cases}
	\end{align} 
	(for some $\vartheta>0$) are not allowed to be chosen instead of $W_\text{dw}$ as they do not satisfy the assumption \eqref{ass:pot:dw}. 
\end{rem}

\medskip

\paragraph{An example for an admissible penalty function $B$.}

\begin{rem}
	\label{EX:PEN}
	For any weight function $b\in L^\infty(\Ga)$, the penalty function $B$ defined (as in \eqref{DEF:PEN}) by
	\begin{align}
	\label{DEF:PEN:U}
		B(z,s) := b(z)\, s \quad \text{for almost all}\; z\in\Ga, \;\text{and all}\; s\in\R
	\end{align}
	obviously satisfies the assumptions \eqref{ass:pen} and \eqref{ass:pen:strong} with $\gamma_{B''}=\LIP=0$.
	In particular, the estimate \eqref{EST:PEN} can be verified with $\alpha_B$ arbitrarily small by means of Young's inequality for products.
	The corresponding penalty term in the free energy reads as
	\begin{align*}
	E_\text{pen}(\psi)=\intG b(z) \psi(z) \dS(z).
	\end{align*}
	As the free energy $E$ decreases along solutions of the system \eqref{CH:ROB} due to dissipation effects (cf.~\eqref{CH:ROB:DISS}), the term $E_\text{pen}$ is also expected to become small over the course of time.
	In this way, the penalization tries to enforce that $\psi$ attains values close to $1$ on the set $\Ga_1:=\{z\in\Ga : b(z)<0 \}$ and that $\psi$ attains values close to $-1$ on 
	$\Ga_{-1}:=\{z\in\Ga : b(z)>0 \}$. On $\Ga_0:=\{z\in\Ga : b(z)=0 \}$, the penalization behaves neutrally towards $\psi$. 
	This means that $\Ga_{\pm 1}$ attracts the material associated with $\psi=\pm 1$ and repels the material associated with $\psi=\mp 1$.
	Of course, since the total free energy becomes small and not only the penalty term, this behavior will also depend on effects caused by the potentials $\Fw$ and $\Gw$. In this regard, it might also be reasonable that at least the surface potential $\Gw=\Gw(z,s)$ is allowed to depend on the spatial variable $z\in\Ga$.
\end{rem}

\section{The gradient flow structure}
\label{SEC:GF}
In this section we show that the Cahn--Hilliard systems \eqref{CH:ROB}, \eqref{CH:DIR} and
(\eqref{CH:DEC:BULK},\eqref{CH:DEC:SURF}) can be interpreted as gradient flow equations of type $H^{-1}$ (both in the bulk and on the surface) with respect to suitable inner products. This structure will be a key ingredient in the well-posedness proof as it allows us to derive a priori bounds for the time-discrete approximate solution. As stated in \eqref{ass:dom}, the constants $\mo$, $\mg$, $\eps$ and $\delta$ are set to one.

\subsection{The gradient flow structure of the Robin model} \label{SUB:GF:1}
Suppose that the functions $\phi$, $\psi$, $\mu$ and $\nu$ are sufficiently regular. For convenience, we use the notation $\bph= (\phi,\psi)$. We claim that the Cahn--Hilliard system \eqref{CH:ROB} can be interpreted as the flow equation 
\begin{align}
	\label{GFE:ROB}
	\scp{\partial_t \bph}{\bet}_{L,\beta,*} = - \frac{\delta E}{\delta \bph}(\bph)[\bet]
	\quad\text{for all}\;\;
	\bet \in \HHBo \cap \LINF.
\end{align}
We notice that the only difference to the gradient flow equation of the local analogue \eqref{CH:INT} is that the local free energy $E^\text{GL}$ is replaced by the nonlocal free energy $E$. The inner product, however, remains the same. 

To prove this claim, let $\ov \bet=(\bz, \bxi)$ denote an arbitrary test function in $\LINF$.
Then, the first variation of the energy \eqref{DEF:EN:ALT}
at the point $\bph$ in the direction $\ov \bet$ reads as follows:
\begin{align}
\label{EQ:GF0}
\begin{aligned}
	\frac {\delta E}{\delta \bph}(\bph)[\ov \bet]
	& = - \intO\intO J(x-y) \phi(y)\, \ov \zeta(x) \dy \dx
		+ \intO F'(\cdot,\phi) \ov \zeta \dx
	\\ 
	& \quad -\intG\intG K(z-y)  \psi(y)\, \ov \xi(z) \dS(y) \dS(z)
		+ \intG 
	G'(\cdot,\psi) \ov \xi
	+ B'(\cdot,\psi) \ov \xi \dS.
\end{aligned}
\end{align}
To identify the gradient of $E$ at $\bph$
with respect to the inner product $(\cdot,\cdot)_{L,\beta,*}$  we look for the element $\nabla E (\bph)\in \HHBo$ such that $(\nabla E({\bph}),\bet)_{L,\beta,*}=\frac {\delta E}{\delta \bph}(\bph)[\bet]$ for every 
$\bet \in \HHBo\cap\LINF$. Denoting this element by $\brh=\nabla E({\bph})$ and using integration by parts along with \eqref{pre:S}, we have 
\begin{align}
\label{EQ:GF1}
\begin{aligned}
	\frac {\delta E}{\delta \bph}(\bph)[\bet]
	& = \intO \nabla \SS_\Om(\brh)  \cdot \nabla \SS_\Om(\bet)\dx 
	+
	\intG \nabla_\Ga  \SS_\Ga(\brh) \cdot \nabla_\Ga \SS_\Ga(\bet) \dS
	\\ & \qquad\quad 
	+ \frac 1 L \intG\big(\beta \SS_\Ga(\brh) - \SS_\Om(\brh) \big)\big(\beta \SS_\Ga (\bet) - \SS_\Om(\bet) \big)\dS 
	\\[1ex] &  
	= - \intO \SS_\Om(\brh) \zeta \dx - \intG \SS_\Ga(\brh) \xi \dS
\end{aligned}
\end{align}
for every $\bet=(\zeta,\xi) \in \HHBo \cap \LINF$.
We point out that \eqref{EQ:GF1} actually holds true for every test function $\ov\bet \in \LINF$ if we shift the integrands in the last line of \eqref{EQ:GF1} by suitable additive constants.
Indeed, we notice that for any arbitrary $\ov \bet =(\bz,\bxi) \in \LINF$,
the function $\bet_0:=(\zeta_0, \xi_0)$ defined by 
\begin{align}
\label{DEF:ET0}
	\zeta_0:= \bz + \beta c_0,
	\quad
	\xi_0:= \bxi + c_0
	\quad\text{with}\quad
	c_0 := -\frac{\beta\intO\bz \dx + \intG\bxi \dS}{\beta^2\abs{\Omega}+\abs{\Gamma}}
\end{align}
belongs to $\HHBo \cap \LINF$.
Choosing now the constant
\begin{align*}
c
:= - \frac {\beta |\Om| \mathcal F(\phi) +|\Ga| \mathcal G(\psi) }
{\beta^2 |\Om|+|\Ga|}
\end{align*}
with 
\begin{align}
\label{DEF:FG}
\begin{aligned}
\mathcal F(\phi) &:= \rO {- J * \phi + F'(\cdot,\phi)}, \\
\mathcal G(\psi) &:= \rG {-K \cg \psi + G'(\cdot,\psi)+ B'(\cdot,\psi) },
\end{aligned}
\end{align}
and defining the functions
\begin{alignat*}{3}
	\mu:= -\SS_\Om (\brh) + \beta c 
	\;\; \hbox{ in $\Om$,}
	\quad\text{and}\quad
	\nu:= -\SS_\Ga (\brh) + c 
	\;\; \hbox{ on $\Ga$}
\end{alignat*}
we conclude that
\begin{align}
\label{EQ:GF2}
\frac {\delta E}{\delta \bph}(\bph)[\ov\bet] = \intO \mu \ov\zeta \dx + \intG \nu \ov\xi \dS
\quad\text{for all}\; \ov\bet\in \LINF.
\end{align}
In particular, in view of \eqref{EQ:GF0}, we can use the fundamental lemma of calculus of variations to deduce the relations
\begin{align*}
\begin{aligned}
\mu& = - J * \phi + F'(\cdot,\phi) &&\quad \hbox{$a.e.$ in $\Om$,}
\\ 
\nu& = - K \cg \psi + G'(\cdot,\psi)+ B'(\cdot,\psi) &&\quad \hbox{$a.e.$ on $\Ga$.}
\end{aligned}
\end{align*}
On the other hand, for all $\bet\in\HHBo \cap\LINF$, we have
\begin{align*}
	\scp{\partial_t \bph}{\bet}_{L,\beta,*}
	&= 
	\intO \nabla \SS_\Om(\partial_t \bph)  \cdot \nabla \SS_\Om(\bet)\dx 
	+
	\intG \nabla_\Ga  \SS_\Ga(\partial_t \bph) \cdot \nabla_\Ga \SS_\Ga(\bet) \dS
	\\ 
	& \qquad
	+ \frac 1 L \intG \big(\beta \SS_\Ga (\partial_t \bph) - \SS_\Om(\partial_t \bph) \big)\big(\beta \SS_\Ga (\bet) - \SS_\Om(\bet) \big)\dS
	\\
	&=   \intO S^L_\Om(\partial_t \bph) \zeta\dx 
	 + \intG S^L_\Ga(\partial_t \bph) \xi \dS.
\end{align*}
Hence, the gradient flow equation 
\begin{align*}
	\scp{\partial_t \bph}{\bet}_{L,\beta,*} = - \frac {\delta E}{\delta \bph}(\bph)[\bet] = - (\nabla E({\bph}),\bet)_{L,\beta,*}
\end{align*}
can be expressed as
\begin{align} 
\label{EQ:GF3}
\begin{aligned}
	 \intO S_\Om^L(\partial_t \bph) \zeta\dx 
	+ \intG S_\Ga^L(\partial_t \bph) \xi \dS
	& = - \intO \mu \zeta\dx 
	- \intG \nu \xi \dS
\end{aligned}
\end{align}
for all $\bet\in\HHBo\cap\LINF$. Again, we can show that \eqref{EQ:GF3} holds true for all test functions $\ov\bet\in \LINF$ if the integrands on the left hand side are modified by appropriate additive constants.  
Proceeding as above, we choose the constant
\begin{align*}
	\ov c := -\frac{\beta\abs{\Om}\meano{\mu} + \abs{\Ga} \meang{\nu} }{\beta^2 \abs{\Om} + \abs{\Ga} }
\end{align*}
to conclude that
\begin{align} 
\label{EQ:GF4}
\begin{aligned}
\intO S_\Om^L(\partial_t \bph) \ov\zeta + \beta \ov c\, \ov\zeta\dx 
+ \intG S_\Ga^L(\partial_t \bph) \ov\xi + \ov c\, \ov\xi \dS
& = - \intO \mu \ov\zeta\dx 
- \intG \nu \ov\xi \dS
\end{aligned}
\end{align}
holds for all test functions $\ov\bet=(\ov\zeta,\ov\xi)\in \LINF$.
Recalling \eqref{pre:S}, it follows from the fundamental theorem of calculus of variations that $(\mu,\nu)$ satisfies the system
\begin{align*}
\begin{cases}
- \Lap \mu = - \partial_t \phi & \text{ in } \Omega, \\
- \Lapg \nu + \beta \deln \mu = - \partial_t \psi & \text{ on } \Gamma, \\
L\deln \mu = \beta \nu - \mu & \text{ on } \Gamma.
\end{cases}
\end{align*}
Thus, combining \eqref{EQ:GF2} and \eqref{EQ:GF4} we conclude \eqref{GFE:ROB}.

\subsection{The gradient flow structure of the limit models}

Provided that the functions $\phi$, $\psi$, $\mu$ and $\nu$ are sufficiently regular, we can argue similarly to derive the gradient flow equation for the Dirichlet model \eqref{CH:DIR} and the decoupled model (\eqref{CH:DEC:BULK},\eqref{CH:DEC:SURF}). 
\begin{itemize}
\item The gradient flow equation corresponding to the Dirichlet model \eqref{CH:DIR} reads as 
\begin{align}
\label{GFE:DIR}
\scp{\partial_t \bph}{\bet}_{0,\beta,*} = - \frac{\delta E}{\delta \bph}(\bph)[\bet]
\quad\text{for all}\;\;
\bet \in \HHBo \cap \LINF,
\end{align}
where $\bph=(\phi,\psi)$.  
Here, the only difference to the gradient flow equation of the local analogue \eqref{CH:GMS} (see \cite{GK}) is that the local free energy $E^\text{GL}$ is replaced by the nonlocal free energy 
$E$, while the inner product remains the same.

\item The gradient flow equation corresponding to the system \eqref{CH:DEC:BULK} reads as 
\begin{align}
\label{GFE:NEUM:BULK}
\scp{\partial_t \phi}{\zeta}_{\Om,*} = - \frac{\delta E_\text{bulk}}{\delta \phi}(\phi)[\zeta]
\quad\text{for all}\;\;
\zeta \in \Lx\infty \;\text{with}\; \meano{\zeta}=0,
\end{align}
whereas the gradient flow equation corresponding to the system \eqref{CH:DEC:SURF} reads as
\pagebreak[2]
\begin{align}
\label{GFE:NEUM:SURF}
\scp{{\partial_t \psi}}{\xi}_{\Ga,*} = - \frac{\delta (E_\text{surf}+ E_\text{pen})}{\delta \psi}(\psi)[\xi]
\quad\text{for all}\;\;
\xi \in \LxG\infty \;\text{with}\; \meang{\xi}=0.
\end{align}
In contrast to the local analogue \eqref{CH:LW}, the gradient flow equations in the bulk and on the surface are fully decoupled. However, if we sum \eqref{GFE:NEUM:BULK} and \eqref{GFE:NEUM:SURF}, we obtain the gradient flow equation for the corresponding system \eqref{CH:LW} (see, e.g., \cite{GK,KL}) only with $E^\text{GL}$ replaced by $E$. 
\end{itemize}

\section{Weak and strong well-posedness of the Robin model}

\subsection{Notion of a weak solution to the Robin model}

We first introduce the notion of a weak solution to the system \eqref{CH:ROB}. We point out that, as stated in \eqref{ass:dom}, the constants $\mo$, $\mg$, $\eps$ and $\delta$ are set to one since they have no impact on the mathematical analysis.

\begin{defn}[Definition of a weak solution to the system \eqref{CH:ROB}] \label{DEF:WP:ROB}
	Let $T,L > 0$, $m \in \R$, $\beta\neq 0$ and $(\phi_0,\psi_0)\in \HHBMo$ be arbitrary and suppose that the conditions \eqref{ass:dom}--\eqref{ass:pen} hold. 
	The quadruplet $(\phi,\psi,\mu,\nu)$ is called a weak solution of the system \eqref{CH:ROB} if the following holds:
	\begin{enumerate}[label=$(\mathrm{\roman*})$, ref = $\mathrm{\roman*}$]
		\item The functions $(\phi,\psi,\mu,\nu)$ have the following regularity
		\begin{align}
		\label{REG:ROB}
		\left\{
		\begin{aligned}
		\phi & \in C^{0,\frac 12}([0,T]; \Hx1 ') \cap H^1(0,T;H^1(\Omega)') \cap \L\infty {\Lx p}
		, \\
		\psi & \in C^{0,\frac 12}([0,T];\HxG1 ') \cap H^1(0,T;H^1(\Gamma)') \cap \L\infty {\LxG q} 
		, \\
		\mu &\in L^2\big(0,T;H^1(\Omega)\big),\\
		\nu &\in L^2\big(0,T;H^1(\Gamma)\big)
		\end{aligned}
		\right.
		\end{align}
		where $p$ and $q$ are the exponents from \eqref{ass:pot:conv},
		and it holds that $(\phi(t),\psi(t))\in \HHBMo$ for almost all $t\in[0,T]$. 
		\item
		The weak formulation 
		\begin{subequations}
			\label{WF:ROB}
			\begin{align}
			\label{WF:ROB:1}
			&\inn{\delt \phi}{ \theta}_{H^1(\Omega)} = - \intO \grad\mu \cdot \grad \theta \dx 
			-  \intG \tfrac 1 L(\beta\nu-\mu)  \theta \dS, \\
			\label{WF:ROB:2}
			&\inn{\delt \psi}{ \sigma}_{H^1(\Gamma)} = - \intG \gradg\nu \cdot \gradg \sigma \dS 
			+ \intG \tfrac 1 L (\beta\nu-\mu) \beta \sigma \dS, \\
			\label{WF:ROB:3}
			&\intO \mu \zeta \dx = \intO - (J * \phi)\zeta + F'(\cdot,\phi)\zeta \dx, \\
			\label{WF:ROB:4}
			&\intG \nu \xi \dS = \intG - (K \cg \psi)\xi + G'(\cdot,\psi)\xi 
			+ B'(\cdot,\psi)\xi  \dS	 
			\end{align}
		\end{subequations}
		is satisfied almost everywhere in $[0,T]$ for all test functions $\theta\in H^1(\Omega)$, $\sigma\in H^1(\Gamma)$, $\zeta\in L^\infty(\Omega)$ and $\xi\in L^\infty(\Gamma)$. Moreover, the initial conditions $\phi\vert_{t=0}=\phi_0$ and $\psi\vert_{t=0}=\psi_0$ are satisfied almost everywhere in $\Omega$ and on $\Gamma$, respectively.
		\item The energy inequality 
		\begin{align}
		\notag
		&E\big(\phi(t),\psi(t)\big) 
		+\frac 1 2 \int_0^t 
		\norm{\grad\mu(s)}_{L^2(\Omega)}^2 
		+ \norm{\gradg\nu(s)}_{L^2(\Gamma)}^2
		+ \tfrac 1 L \norm{\beta\nu(s) - \mu(s)}_{L^2(\Gamma)}^2
		\ds \\
		&\qquad \le E(\phi_0,\psi_0)
		\label{energy:ROB}
		\end{align}
		is satisfied for all $t\in[0,T]$.
	\end{enumerate}
\end{defn}

\subsection{Weak well-posedness}

The weak well-posedness result reads as follows.

\begin{thm}[Weak well-posedness for the system \eqref{CH:ROB}] \label{THM:WP:ROB}
	Let $T,L > 0$, $m \in \R$ and $\beta\neq 0$ be arbitrary and suppose that the conditions \eqref{ass:dom}--\eqref{ass:pen} hold. 
	Then, for any initial datum $(\phi_0,\psi_0)\in \HHBMo$ satisfying $F(\cdot,\phi_0)\in L^1(\Omega)$, $G(\cdot,\psi_0), B(\cdot,\psi_0)\in L^1(\Gamma)$, there exists a unique weak solution of the system \eqref{CH:ROB} in the sense of Definition~\ref{DEF:WP:ROB}. 
\end{thm}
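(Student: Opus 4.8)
The plan is to construct a solution by the implicit time discretization (minimizing-movement scheme) dictated by the gradient-flow formulation \eqref{GFE:ROB}, and to obtain uniqueness by an $H^{-1}$-type energy estimate. Fix $N\in\N$, put $\tau=T/N$, set $(\phi^0,\psi^0)=(\phi_0,\psi_0)$, and define $(\phi^{n+1},\psi^{n+1})$ inductively as a minimizer of
\[
\mathcal J_n(\phi,\psi):=\frac{1}{2\tau}\,\norm{(\phi,\psi)-(\phi^n,\psi^n)}_{L,\beta,*}^2+E(\phi,\psi)
\]
over all $(\phi,\psi)\in\Lx p\times\LxG q$ subject to $\beta\abs{\Om}\meano{\phi}+\abs{\Ga}\meang{\psi}=m$. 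Since any two competitors share the same weighted mean, their difference lies in $\HHBo\subset\HH_{\beta,0}^{-1}$, so the $\norm{\cdot}_{L,\beta,*}$-term is well defined. A minimizer exists by the direct method: coercivity follows from the superlinear lower bounds in \eqref{ass:growth} (the penalty contributes at most $\alpha_B\abs{s}^q$ with $\alpha_B<\alpha_G$ by \eqref{EST:PEN} and is absorbed), and weak lower semicontinuity uses the convexity of $F,G$, the strong convergence of the convolutions from Lemma~\ref{LEM:JK}, and the semicontinuity of the penalty granted by \eqref{ass:pen:2}. Writing the Euler--Lagrange equation for variations $\bet\in\HHBo\cap\LINF$ and invoking the operator $\SS$ exactly as in Section~\ref{SUB:GF:1}, I recover discrete chemical potentials $\mu^{n+1},\nu^{n+1}$ together with the fully discrete analogue of the weak formulation \eqref{WF:ROB}.

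\textbf{A priori estimates.} Testing $\mathcal J_n$ with the competitor $(\phi^n,\psi^n)$ yields the discrete dissipation inequality
\[
E(\phi^{n+1},\psi^{n+1})+\frac{1}{2\tau}\,\norm{(\phi^{n+1},\psi^{n+1})-(\phi^n,\psi^n)}_{L,\beta,*}^2\le E(\phi^n,\psi^n).
\]
Summation gives, uniformly in $\tau$, a bound on $\sup_nE(\phi^n,\psi^n)$, hence on $\phi^n$ in $\Lx p$ and $\psi^n$ in $\LxG q$ by \eqref{ass:growth}, together with control of the discrete time derivative of the piecewise-linear interpolant in the $\norm{\cdot}_{L,\beta,*}$-norm, which by Lemma~\ref{LEM:NORMS} transfers to a bound in $\L2{(\HH^1)'}$. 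The discrete Euler--Lagrange relation further controls $\tau\sum_n\big(\norm{\grad\mu^{n+1}}_{\Lx2}^2+\norm{\gradg\nu^{n+1}}_{\LxG2}^2+\tfrac1L\norm{\beta\nu^{n+1}-\mu^{n+1}}_{\LxG2}^2\big)$. The delicate point is upgrading these gradient bounds to full $\L2{\Hx1}\times\L2{\HxG1}$ bounds, which requires controlling the spatial means $\meano{\mu^{n+1}}$ and $\meang{\nu^{n+1}}$. This is handled by a convexity argument: testing $\mu^{n+1}=-J\ast\phi^{n+1}+F'(\cdot,\phi^{n+1})$ against $\phi^{n+1}-\meano{\phi^{n+1}}$ and using \eqref{EST:FG:2} and the Poincar\'e inequality bounds $\norm{F'(\cdot,\phi^{n+1})}_{\Lx1}$, hence $\meano{\mu^{n+1}}$; the same works for $\nu^{n+1}$ with $G'+B'$.

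\textbf{Passage to the limit.} This is the main obstacle. In the nonlocal setting the phase fields gain no spatial regularity (only $\phi\in\L\infty{\Lx p}$), so weak convergence of the interpolants does not suffice to identify $F'(\cdot,\phi)$; I therefore prove strong $L^2$ convergence. The compact embedding $\Lx2\emb\Hx1'$ together with the derivative bound yields, via the Aubin--Lions--Simon lemma, strong convergence of $\phi$ in $C([0,T];\Hx1')$ (and of $\psi$ in $C([0,T];\HxG1')$). Then I exploit strong monotonicity: writing $F'(\cdot,\phi_k)=\mu_k+J\ast\phi_k$, the lower bound $\int_0^T\!\intO(F'(\cdot,\phi_k)-F'(\cdot,\phi_j))(\phi_k-\phi_j)\dx\dt\ge c_*\norm{\phi_k-\phi_j}_{L^2(\OT)}^2$ from \eqref{EST:FG:2} is matched against a right-hand side whose $\mu$-part is estimated by the $\Hx1'$--$\Hx1$ duality and tends to zero (since $\phi_k\to\phi$ in $C([0,T];\Hx1')$ while $\mu_k$ stays bounded in $\L2{\Hx1}$), and whose convolution part is small by Lemma~\ref{LEM:JK}. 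Thus $(\phi_k)$ is Cauchy in $L^2(\OT)$; the same argument applies on $\Gamma$ because $G'+B'$ remains strongly monotone with constant $c_\cg-\LIP>0$ by \eqref{ass:pen:uniq}. With a.e.\ convergence at hand, the growth bounds \eqref{ass:growth} and \eqref{EST:PEN} identify $F'(\cdot,\phi_k)\wto F'(\cdot,\phi)$, $G'(\cdot,\psi_k)\wto G'(\cdot,\psi)$ and $B'(\cdot,\psi_k)\to B'(\cdot,\psi)$ in the appropriate dual-exponent spaces, so one may pass to the limit in the discrete weak formulation; weak lower semicontinuity of $E$ then yields the energy inequality \eqref{energy:ROB}.

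\textbf{Uniqueness.} Given two solutions, set $(\phi,\psi):=(\phi_1-\phi_2,\psi_1-\psi_2)\in\HHBo$ and test the difference of \eqref{WF:ROB:1}--\eqref{WF:ROB:2} with $\big(\SS_\Om(\phi,\psi),\SS_\Ga(\phi,\psi)\big)$, so that the left-hand side reduces to $\tfrac12\ddt\norm{(\phi,\psi)}_{L,\beta,*}^2$ and the right-hand side, after using \eqref{WF:S}, becomes $-\intO(\mu_1-\mu_2)\phi\dx-\intG(\nu_1-\nu_2)\psi\dS$. Substituting \eqref{WF:ROB:3}--\eqref{WF:ROB:4}, the leading terms $\intO(F'(\cdot,\phi_1)-F'(\cdot,\phi_2))\phi\dx$ and $\intG(G'(\cdot,\psi_1)-G'(\cdot,\psi_2)+B'(\cdot,\psi_1)-B'(\cdot,\psi_2))\psi\dS$ are bounded below by $c_*\norm{\phi}_{\Lx2}^2$ and $(c_\cg-\LIP)\norm{\psi}_{\LxG2}^2$, while the convolution and cross terms are absorbed by Young's inequality together with the smoothing estimate of Lemma~\ref{LEM:JK:UNIQ} (in dual form) and Lemma~\ref{LEM:NORMS}. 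A Gronwall argument then forces $\norm{(\phi,\psi)}_{L,\beta,*}\equiv0$, and the potential relations give $\mu_1=\mu_2$ and $\nu_1=\nu_2$.
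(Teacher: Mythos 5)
Your proposal is correct, and its skeleton coincides with the paper's proof: the same minimizing-movement scheme with respect to $\norm{\cdot}_{L,\beta,*}$, the direct method for the discrete minimizers (with the penalty absorbed thanks to $\alpha_B<\min\{\alpha_{G},\alpha_{\Gw}\}$), the same dissipation and mean-value bounds, interpolant compactness, and a monotonicity--Gronwall uniqueness estimate. Within this skeleton you deviate in three places, each legitimately. (i) For the mean of $\mu^{n+1}$ the paper only needs the upper growth bound $|F'(x,s)|\le\gamma_{F'}(1+|s|^{p-1})$: testing \eqref{disc:3} with a fixed smooth $\rho$ and invoking a generalized Poincar\'e inequality already yields $\norm{\mu^{n+1}}_{\Lx2}\le C(1+\norm{\grad\mu^{n+1}}_{\Lx2})$; your convexity-based control of $\norm{F'(\cdot,\phi^{n+1})}_{\Lx1}$ works too, but it is heavier machinery than needed for these polynomial potentials (it is the argument one would need for singular ones). (ii) The most interesting difference is the passage to the limit: the paper never proves strong $L^2(\OT)$ convergence of the phase fields; it asserts their a.e.\ convergence (deferring to \cite{GK} for details) and then identifies $F'(\cdot,\phi_N)$ and $G'(\cdot,\psi_N)$ via the growth conditions and Lebesgue's general convergence theorem, the convolution terms being treated by Lemma~\ref{LEM:JK} using weak convergence only. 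Your Cauchy-in-$L^2$ argument---playing the strong monotonicity constants $c_*$ and $c_\cg-\LIP>0$ against the strong convergence in $C^0([0,T];\Hx1')$ and the smoothing estimate of Lemma~\ref{LEM:JK:UNIQ} (note that this, rather than Lemma~\ref{LEM:JK}, is the natural tool for the convolution term there)---supplies exactly the mechanism behind that a.e.\ convergence; since in the nonlocal setting the energy gives no $H^1$ bound on $\phi_N$, this makes the identification step self-contained, arguably more so than the paper's exposition. (iii) For uniqueness, the paper avoids differentiating $t\mapsto\norm{(\phi,\psi)(t)}_{L,\beta,*}^2$ (the solutions only lie in $H^1(0,T;(\HH^1)')$) by testing with time-integrated functions and using the representations $\SS_\Om(\phi)=-\int_0^t\mu\ds+\beta ct$ and $\SS_\Ga(\psi)=-\int_0^t\nu\ds+ct$; your differential version, testing with $\SS(\phi,\psi)$, reaches the same identity $\tfrac12\ddt\norm{(\phi,\psi)}_{L,\beta,*}^2=-\intO\phi\mu\dx-\intG\psi\nu\dS$ and is justified because $\SS$ is a bounded linear operator, so that $\SS(\phi,\psi)\in H^1(0,T;\HH^1)$ and the chain rule applies; from there your estimates (monotonicity, Lemma~\ref{LEM:JK:UNIQ}, Lemma~\ref{LEM:NORMS}, Young, Gronwall) are identical to the paper's Step 7.
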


\begin{rem} 
\label{REM:weakLpq}
Since $\phi(t) \in L^p(\Om)$ and $\psi(t) \in L^q(\Ga)$ for almost all $t\in[0,T]$, it holds that $F'(\cdot,\phi(t))\in L^\pp(\Om)$ and $G'(\cdot,\phi(t)), B'(\cdot,\phi(t))\in L^\qp(\Ga)$ for almost all $t\in[0,T]$ due to the growth conditions in \eqref{ass:growth}.
Moreover, we have $(J\ast\phi(t))\in L^\pp(\Om)$ and $(K\cg\psi(t))\in L^\qp(\Ga)$ for almost all $t\in[0,T]$ according to Remark~\ref{rem:JK}. 
Hence, by a density argument, the weak formulations \eqref{WF:ROB:3} and \eqref{WF:ROB:4} remain valid for all test functions $\zeta\in L^p(\Om)$ and $\xi\in L^q(\Ga)$.
\end{rem}
	
\begin{proof}[Proof of Theorem~\ref{THM:WP:ROB}]
The proof is divided into seven steps. Throughout this proof, the symbol $C$ will denote generic positive constants independent of $N$, $n$ and $\tau$ that may change their value from line to line.

\subparagraph{Step 1: Implicit time discretization.} 
We devise a particular minimization
movements scheme based on the gradient flow structure discussed in Section~\ref{SEC:GF}. To this end, let us fix an arbitrary $N \in \mathbb{N}$ and let
$\tau := T / N$ denote our time-step size. 
We now define a time-discrete approximate solution $\bph^n=(\phi^n,\psi^n) \in \HHBMo$, $n=1,...,N$ by the following recursion: 
The zeroth iterate is given by the initial data, i.e., $\bph^0=(\phi^0,\psi^0):=(\phi_0,\psi_0)$. Assuming that the $n$-th iterate $\bphn$ is already constructed, we define $\bphnp$ as a minimiser of the functional 
\begin{align}
\IN (\bph) :=\frac 1 {2\tau} \norm{\bph - \bphn}_{L,\beta,*}^2 
+ E(\bph)
\label{Jn}
\end{align}
over the set $\HHBMo$, i.e.,
\begin{align}
\bphnp \in {\rm arg\min} \, \IN(\bph) 
\quad \hbox{for every $\bph\in \HHBMo$}.
\end{align}
The existence of such a minimiser will be addressed in Step 2. The idea behind this construction is the following:
Formally, the minimality of $\bphnp$ entails that 
\begin{align*}
0&=\frac {\delta \IN }{\delta \bph}(\bphnp)[\bet] =
\Big <\frac{ \bphnp - \bphn}\tau , \bet \Big >_{L,\beta,*}
+ \frac{\delta E}{\delta \bph} (\bphnp)[\bet]
\\[1ex]
&
\begin{aligned}
&=\Big <\frac{ \bphnp - \bphn}\tau , \bet \Big >_{L,\beta,*}
+ \intO -(J* \phi^{n+1})\zeta
+ F'(\cdot,\phi^{n+1}) \zeta \dx
\\ 
& \qquad +\intG -(K \cg \psi^{n+1}) \xi
+ G'(\cdot,\psi^{n+1}) \xi
+ B'(\cdot,\psi^{n+1})  \xi \dS
\end{aligned}
\end{align*}
for every test function $\bet=(\zeta,\xi)\in \HHBo\cap\LL^\infty$.
Thus, $\{\bphn\}_{n=1,...,N}$ can be interpreted as a time-discrete approximate solution of the gradient flow equation \eqref{GFE:ROB}. 
However, depending on the growth conditions on the potentials, it is possible that the functional $\IN$ is not Gâteaux differentiable at the point $\bphnp$ as $\IN$ might attain the value $+\infty$ in every neighbourhood of $\bphnp$. However, exploiting the convexity of $F$ and $G$, we can proceed as in \cite{garckeelas} to rigorously show that the Euler--Lagrange equation
\begin{align}
\label{EQ:EL}
\begin{aligned}
0& =\Big <\frac{ \bphnp - \bphn}\tau , \bet \Big >_{L,\beta,*}
+ \intO -(J* \phi^{n+1})\zeta
+ F'(\cdot,\phi^{n+1}) \zeta \dx
\\ 
& \quad +\intG -(K \cg \psi^{n+1}) \xi
+ G'(\cdot,\psi^{n+1}) \xi
+ B'(\cdot,\psi^{n+1})  \xi \dS
\end{aligned}
\end{align}
holds true for all $\bet=(\zeta,\xi)\in \HHBo\cap\LL^\infty$ nevertheless.
Now, recalling the solution operator defined in \eqref{pre:S}, we set 
\begin{align}
\label{DEF:MUNU:0}
(\accentset{\circ} \mu^{n+1} ,\accentset{\circ} \nu^{n+1}):= \SS\Big(\frac{ \bphnp - \bphn}\tau\Big) \in \HHBI.
\end{align}
Then for every $n = 0,...,N-1$ and any $\bet \in \HHBo\cap \LL^\infty$, a straightforward computation reveals that 
\begin{align}
\label{EQ:MUNU:0}
\begin{aligned}
&\intO \accentset{\circ} \mu^{n+1} \zeta \dx + 
\intG \accentset{\circ} \nu^{n+1} \xi \dS \\
&\quad =
\intO -(J* \phi^{n+1})\zeta 
+ F'(\cdot,\phi^{n+1}) \zeta \dx
\\ 
& \qquad 
+ \intG -(K \cg \psi^{n+1}) \xi
+ G'(\cdot,\psi^{n+1}) \xi
+ B'(\cdot,\psi^{n+1})  \xi \dS.
\end{aligned}
\end{align}
Arguing as in Section~\ref{SEC:GF}, we now want to
replace 
$(\accentset{\circ} \mu^{n+1} ,\accentset{\circ} \nu^{n+1})$
by functions $(\mu^{n+1} ,\nu^{n+1})$
such that \eqref{EQ:MUNU:0} holds true for all test functions $\bet=(\zeta,\xi)\in \HH^0\cap\LL^\infty = \LL^\infty$ (i.e., there is no restriction to the mean values of $\zeta$ and $\xi$ anymore). 
Setting
\begin{align}
\label{DEF:MUNU}
\mu^{n+1}:= \accentset{\circ} \mu^{n+1} + \beta c^{n+1},
\quad 
\nu^{n+1}:= \accentset{\circ} \nu^{n+1} + c^{n+1},
\end{align}
with 
\begin{align*}
c^{n+1}
:= - \frac {\beta |\Om| \mathcal F(\phi^{n+1}) +|\Ga| \mathcal G(\psi^{n+1}) }
{\beta^2 |\Om|+|\Ga|},
\end{align*}
where $\mathcal F$ and $\mathcal G$ are defined as in \eqref{DEF:FG},
we 
conclude that the functions $\bph^n$, $\bph^{n+1}$ and $(\mu^{n+1},\nu^{n+1})$
satisfy the equations
\begin{subequations}
	\label{disc}
\begin{align}
& \label{disc:1}
{\intO \Big (\frac{ \phi^{n+1} - \phi^{n} }\tau \Big)\theta \dx
	=- \intO \nabla \mu^{n+1} \cdot \nabla \theta \dx
	-\intG \tfrac 1L (\beta \nu^{n+1}-\mu^{n+1}) \theta \dS,
}
\\ 
& \label{disc:2}
\intG \Big (\frac{ \psi^{n+1} - \psi^{n} }\tau \Big) \sigma \dS
= - \intG \nabla_\Ga \nu^{n+1}\cdot \nabla_\Ga \sigma \dS
+ \intG \tfrac 1L (\beta \nu^{n+1}-\mu^{n+1}) \beta \sigma 	\dS,
\\ 
& \label{disc:3}
\intO \mu^{n+1} \zeta \dx  =
\intO -(J* \phi^{n+1})\zeta 
+ F'(\cdot,\phi^{n+1}) \zeta \dx,
\\ 
& \label{disc:4}
\intG \nu^{n+1} \xi \dS =
\intG -(K \cg \psi^{n+1}) \xi
+ G'(\cdot,\psi^{n+1}) \xi
+ B'(\cdot,\psi^{n+1})  \xi \dS
\end{align}
\end{subequations}
for all test functions $\theta \in H^1(\Om)$, $\sigma \in H^1(\Ga)$ and $\bet=(\zeta,\xi) \in  \LL^\infty$.
This system can be interpreted as an implicit time discretization of the weak formulation \eqref{WF:ROB}
and hence, the quadruplet $(\phi^n,\psi^n,\mu^n,\nu^n)$ is a time-discrete approximate solution of the system \eqref{CH:ROB}.
In order to show that this approximate solution converges to a weak solution of \eqref{CH:ROB} in some suitable sense, it must first be extended onto the whole time interval.
To this end, for every $n=1,...,N$, we define
the \textit{piecewise constant extension} by 
\begin{align*}
\big(\phi_N,\psi_N,\mu_N,\nu_N\big)(\cdot,t)&:=\big(\phi_N^n,\psi_N^n,\mu_N^n,\nu_N^n\big)(\cdot,t):= \big(\phi^n,\psi^n,\mu^n,\nu^n\big)
\end{align*}
for $t\in \big((n-1)\tau,n\tau\big]$, and the \textit{piecewise linear extension} by
\begin{align*}
\big(\ov \phi_N,\ov \psi_N,\ov \mu_N,\ov\nu_N\big)(\cdot,t)&:=
\alpha \big(\phi_N^n,\psi_N^n,\mu_N^n,\nu_N^n\big)
+(1-\alpha)\big(\phi_N^{n-1},\psi_N^{n-1},\mu_N^{n-1},\nu_N^{n-1}\big)
\end{align*}
for, every $\alpha \in [0,1]$, and $t= \alpha n t + (1-\alpha)(n-1)\tau$, respectively. 	

\subparagraph{Step 2: Existence of a minimiser.}
We now show that, for every $n\in\mathbb{N}$, the functional $\IN$ defined in \eqref{Jn} admits a minimiser in the set $\HHBMo$.
For the proof we apply the direct method of calculus of variations.
It is worth mentioning that the aforementioned discussion concerning the relation between the energies \eqref{DEF:EN} and \eqref{DEF:EN:ALT}
(as well as between $F,G$ and $\Fw,\Gw$) allows us to switch between these two frameworks at our convenience.
Indeed, we will first make use of the representation \eqref{DEF:EN:ALT}
to show that $\IN$ is bounded from below, while the rest 
of the proof is carried out using the depiction \eqref{DEF:EN}.

First, using the representation \eqref{DEF:EN}, 
the nonnegativity of $\Fw$, \eqref{ass:growth:2b} and \eqref{ass:pen:1},
we deduce that 
\begin{align*}
\IN(\bph) 
&\geq 
\intG \Gw (\cdot,\psi)\dS
+ \intG B (\cdot,\psi)\dS
\\
&\geq 
\alpha_{\Gw} \intG |\psi|^q \dS
- \delta_{\Gw}|\Ga|
- \intG |B (\cdot,\psi)|\dS
\\ 
& \geq
(\alpha_{\Gw}- \alpha_B) \intG |\psi|^q \dS
- |\Ga|(\delta_{\Gw}+\gamma_B)
\geq
-  |\Ga|(\delta_{\Gw}+\gamma_B)
\end{align*}
for any $\bph\in\HHBMo$. This proves that the functional $\IN$ is bounded from below and thus, 
\begin{align*}
	\INF:=\inf_{\bph\in\HHBMo} \IN(\bph)
\end{align*}
exists. This allows us to choose a minimising sequence $(\bph_k)_{k\in\N}=(\phi_k,\psi_k)_{k\in\N}\subset \HHBMo$ 
which satisfies
\begin{align*}
\lim_{k\to\infty} \IN(\bph_k) = \INF,
\quad\text{and}\quad
 \IN(\bph_k) \leq \INF +1 \quad \hbox{for every $k\in \mathbb{N}$}.
\end{align*}
Furthermore, invoking the growth assumption of the potentials $\Fw$ and $\Gw$ pointed out by \eqref{ass:growth:2a}--\eqref{ass:growth:2b}, we infer that 
\begin{align*}
\alpha_{\Fw} \intO \abs{\phi_k}^{p} \dx
- |\Om|\delta_{\Fw} 
+ (\alpha_{\Gw}-\alpha_B) \intG \abs{\psi_k}^{q} \dS
- |\Ga|(\delta_{\Gw} + \gamma_B)
&\leq 
E(\bph_k)
\leq \INF +1
\end{align*}
which leads to 
\begin{align*}
\alpha_{\Fw}\intO \abs{\phi_k}^{p} \dx
+  (\alpha_{\Gw}-\alpha_B)\intG \abs{\psi_k}^{q} \dS
\leq 
 \INF  +1+ \big(|\Om| \delta_{\Fw}+ |\Ga|(\delta_{\Gw}+\gamma_B)\big).
\end{align*} 
Hence, since $\alpha_B < \alpha_{\Gw}$, there exists a non-relabeled subsequence of $(\bph_k)_{k\in\N}$ 
and a limit $\ov \bph=(\bphi,\bpsi)\in L^p(\Om) \times L^q(\Ga) \subset \HH^0$
such that
\begin{align*}
\phi_k \wto \bphi \quad \hbox{in $L^p(\Om)$},
\quad 
\psi_k \wto \bpsi \quad \hbox{in $L^q(\Ga)$},
\end{align*}
as $k\to\infty$. Due to these convergence properties it is straightforward to check that $\ov\bph \in\HHBMo$.
It remains to show that $\ov\bph$ is indeed a minimizer of the functional $\IN$.
To this end, we use the representation \eqref{DEF:EN} of the energy $E$.
Recalling Lemma~\ref{LEM:JK}, that $F$ and $G$ are convex and that  
\begin{align*}
\intG B(\cdot,\bpsi) \dS &\le \underset{k\to\infty}{\lim\inf} \intG B(\cdot,\psi_k) \dS
\end{align*}
according to \eqref{ass:pen:2}, we conclude that
\begin{align*}
\IN(\ov\bph)\leq \liminf_{k\to\infty} \IN(\bph_k)= \INF.
\end{align*}
By the definition of $\INF$ this proves that $\ov\bph$ is a minimizer of $\IN$ on the domain $\HHBMo$.

\subparagraph{Step 3: Uniform estimates.}
In order to prove convergence of the piecewise constant extension, we now aim to establish uniform bounds on the functions $(\phi^N,\psi^N,\mu^N,\nu^N)$.
We claim that there exists a positive constant $C$
independent of $N,n,\tau$ such that
\begin{align}
\label{unif_est}
\begin{aligned}
&\norm{\phi_N}_{\L\infty {\Lx{p}}}
+ \norm{\psi_N}_{\L\infty {\LxG{q}}} 
\\
&\quad + \norm{\mu_N}_{\L2 {\Hx1}}
+ \norm{\nu_N}_{\L2 {\HxG1}}
\leq C.
\end{aligned}
\end{align}
As $\bphnp$ is a minimizer of $\IN$ over $\HHBMo$, we infer that
\begin{align*}
\IN(\bphnp)=\tfrac 1 {2\tau} \norm{\bphnp - \bphn}_{L,\beta,*}^2 
+ E(\bphnp)
\leq
\IN(\bphn) =E(\bphn)
\end{align*}
for all $n\in\{0,...,N-1\}$. Moreover, by induction and using \eqref{ass:pot:conv}, we infer that
\begin{align*}
\alpha_{F}\intO \abs{\phi^{n+1}}^{p} 
+ (\alpha_{G}-\alpha_B)\intG \abs{\psi^{n+1}}^{q} 
\leq 
E(\bphnp) + C \leq 
E(\bph^0) + C
\end{align*}
so that, owing to the boundedness of $E(\bph^0)$, we obtain the uniform bound
\begin{align}
\label{EST:PHIPSI}
\|{\phi^{n+1}}\|_{L^p(\Om)}^p
+ \|{\psi^{n+1}}\|_{L^q(\Ga)}^q
\leq C.
\end{align}
Next, fixing an arbitrary nontrivial function $\rho\in C^\infty(\Om)$ as a test function in \eqref{disc:3}, we get 
\begin{align*}
\intO \mu^{n+1} \rho \dx  =
\intO -(J* \phi^{n+1})\rho 
+ F'(\cdot,\phi^{n+1}) \rho \dx.
\end{align*}
Now, using \eqref{EST:J}, \eqref{EST:PHIPSI} and the polynomial growth of the potential $F$ as postulated in
\eqref{ass:pot:conv},
we obtain
\begin{align*}
&\left|
\intO - (J* \phi^{n+1})\rho 
+ F'(\cdot,\phi^{n+1}) \rho \dx
\right|
\\
&\quad \leq
\left[ a^* \norm{\phi^{n+1}}_{L^1(\Om)}
+\gamma_{F'} \left(|\Om|+ \norm{\phi^{n+1}}^{p-1}_{L^{p-1}(\Om)}\right)\right] \norm{\rho}_\infty \le C\norm{\rho}_\infty .
\end{align*}
Hence, there exists a constant $C(\rho)$ independent of $N$ such that
\begin{align}
\label{genpoi}
\left|
\intO \mu^{n+1} \rho \dx 
\right|
\leq 
C(\rho).
\end{align}
Arguing as in \cite{GK}, we define  
\begin{align*}
{\cal M}_{\rho}
:= 
\left \{
v \in H^1(\Om) \, : \, \left|\intO v \rho \dx \right| \leq C(\rho)
\right \},
\quad
C_0:= \frac {C(\rho)}{|\intO v \rho\dx |}.
\end{align*}
Note that ${\cal M}_\rho $ is a nonempty, closed and convex subset of $H^1(\Om)$.
The definition of $C_0$ entails that
\begin{align*}
|\xi| \leq \frac {|\intO \xi \rho \dx|}{\intO \rho \dx}\leq C_0
\quad 
\hbox{ for all } \; \xi \in \R \;\hbox{ such that }\; \xi \chi_{\Om}\in {\cal M}_\rho,
\end{align*}
where $\chi_{\Om}$ denotes the characteristic function of the set $\Om$. 
This allows us to apply the generalized Poincar\'e inequality \cite[p.~242]{alt} which yields 
\begin{align*}
	\norm{v}_{L^2(\Om)}
	\leq C(1+\norm{\nabla v}_{L^2(\Om)}) \quad\text{for all $v\in \mathcal M_\rho$}.
\end{align*} 
In particular, since $\mu^{n+1}\in {\cal M}_{\rho}$, we conclude that 
\begin{align}
\label{EST:MU}
\norm{\mu^{n+1}}_{L^2(\Om)}
\leq C(1+\norm{\nabla \mu^{n+1}}_{L^2(\Om)})
\quad 
\hbox{for all $n\in\{0,...,N-1\}$}.
\end{align}
It thus remains to establish a uniform bound on $\grad \mu_N$. We first recall from the definition of the piecewise constant extension that for 
arbitrary $n \in \{1,...,N-1\}$, $t = n \tau$ and 
for any $s \in (t-\tau,t]$, it holds that
\begin{align*}
\big(\phi_N,\psi_N,\mu_N,\nu_N\big)(s)=\big(\phi_N,\psi_N,\mu_N,\nu_N\big)(t)=\big(\phi_N^n,\psi_N^n,\mu_N^n,\nu_N^n\big).
\end{align*}
Using the above estimates as well as the definition of $\mu_N$
and $\nu_N$ we infer that
\begin{align*}
& E(\bph_N(t)) + \frac 12 \int_{t-\tau}^t \norm{\nabla \mu_N(s)}^2_{\Lx2}
+\norm{\gradg \nu_N(s)}^2_{\LxG2} 
+	\frac 1L \norm{\beta \nu_N(s) - \mu_N(s)}^2_{\LxG2} 
\ds
\\ & \quad
=
E(\bph_N(t)) + \frac 1{2\tau^2} \int_{t-\tau}^t \norm{\bph_N(s)-\bph_N(s-\tau)}^2_{L,\beta,*}\ds
\\ & \quad
=
E(\bph_N(t)) + \frac 1{2\tau^2}  \int_{t-\tau}^t \norm{\bph_N(t)-\bph_N(t-\tau)}^2_{L,\beta,*}\ds
\\ & \quad
=
E(\bph_N(t)) + \frac 1{2\tau}  \norm{\bph_N(t)-\bph_N(t-\tau)}^2_{L,\beta,*}
\leq
E(\bph_N(t-\tau)).
\end{align*}
Arguing by induction we conclude that
\begin{align}
\label{EST:ENG}
\begin{aligned}
&E(\bph_N(t)) + \frac 12 \int_{0}^t \norm{\nabla \mu_N(s)}^2_{\Lx2}
+\norm{\gradg \nu_N(s)}^2_{\LxG2} 
+	\frac 1L \norm{\beta \nu_N(s) - \mu_N(s)}^2_{\LxG2} 
\ds
\\
&\quad\leq 
E (\bph^0).
\end{aligned}
\end{align}
Therefore, choosing $t= T =N \tau$ leads to
\begin{align*}
\norm{\nabla \mu_N}^2_{\L2 {\Lx2}}
+\norm{\gradg \nu_N}^2_{\L2 {\LxG2}}
\leq 2 (E(\bph^0) + C)
\leq C
\end{align*}
and in combination with \eqref{EST:MU} we conclude the uniform bound
\begin{align}
	\label{BND:MU}
	\norm{\mu_N}_{\L2 {\Hx1}}\leq C.
\end{align}
Next, testing \eqref{disc:4} with $\xi \equiv 1$ gives
\begin{align*}
\intG \nu^{n+1}(t)  \dS =
\intG - (K \cg \psi^{n+1}) (t)
+ G'(\cdot,\psi^{n+1}(t)) 
+ B'(\cdot,\psi^{n+1}(t))  \dS
\end{align*}
for almost all $t \in [0,T]$.
Hence, using the assumptions postulated in \eqref{ass:pot:conv} and \eqref{ass:pen} as well as the uniform bound \eqref{EST:PHIPSI}, we infer that
\begin{align*}
&\left|\intG \nu^{n+1}\dS \right|
\leq a^\cg \norm{\psi^{n+1}}_{\Lx{1}}
+ \gamma_{G'}\norm{\psi^{n+1}}_{\Lx{q-1}}^{q-1}
+ \alpha_B \norm{\psi^{n+1}}_{\Lx{q}}^{q}
+  |\Ga|(\gamma_{G'}+\gamma_B )
\leq C
\end{align*}
which means that $\rG {\nu_N}$ is uniformly bounded in $L^\infty(0,T)$.
Thus, we can use Poincar\'e's inequality to conclude the uniform bound
\begin{align}
	\label{BND:NU}
	\norm{\nu_N}_{\L2 {\HxG1}} \le C
\end{align}
which proves the claim.

\subparagraph{Step 4: H\"older estimates in time.}
\begin{subequations}
Using interpolation type arguments, we now show that the piecewise linear extension is H\"older continuous in time. We claim that for all $t,s\in[0,T]$,
	\begin{alignat}{4}
	\label{EST:HLD}
	\norm{\ov\phi_N(t)-\ov\phi_N(s)}_{H^1(\Omega)'} + \norm{\ov\psi_N(t)-\ov\psi_N(s)}_{H^1(\Gamma)'} 
	&\le C \abs{t-s}^{\frac 1 2}, \\
	\label{EST:DIFF}
	\norm{\phi_N(t)-\ov\phi_N(t)}_{H^1(\Omega)'} + \norm{\psi_N(t)-\ov\psi_N(t)}_{H^1(\Gamma)'} 
	&\le C \tau^{\frac 1 2}, \\
	\label{EST:HLD:D}
	\norm{\delt \ov \phi_N}_{L^2(0,T;H^1(\Omega)')} + \norm{\delt \ov \psi_N}_{L^2(0,T;H^1(\Gamma)')} 
	&\le C.
	\end{alignat}
\end{subequations}

To prove this assertion, we first deduce from \eqref{disc:1} and \eqref{disc:2} that for all
$\theta \in H^1(\Om)$, $\sigma \in H^1(\Ga)$ and almost all $t\in[0,T]$,
\begin{subequations}
	\label{EQ:DTU}
	\begin{align}
	\label{EQ:DTU:1}	\inn{\delt\ov\phi_N(t)}{\theta}_{H^1(\Om)} 
	&= - \intO \grad \mu_N(t) \cdot \grad \theta \dx 
	- \intG \tfrac 1 L \big(\beta \nu_N(t) - \mu_N(t)\big) \theta \dS, \\
	\label{EQ:DTU:2}	\inn{\delt\ov\psi_N(t)}{\sigma}_{H^1(\Ga)} 
	&= - \intG \gradg \nu_N(t) \cdot \gradg \sigma \dS 
	+ \intG \tfrac 1 L \big(\beta \nu_N(t) - \mu_N(t)\big) \beta \sigma \dS.
	\end{align}
\end{subequations}
Let $s, t \in [0,T]$ be arbitrary and without loss of generality we suppose that $s < t$. Using \eqref{EQ:DTU:1}, along with the Cauchy--Schwarz inequality, we obtain
\begin{align}
\label{EST:DIFF:PHI}
\begin{aligned}
& \norm{\ov\phi_N(t)-\ov\phi_N(s)}_{H^1(\Om)'} 
= \underset{\norm{\theta}_{H^1(\Om)}=1}{\sup}\, \Big| \inn{\ov\phi_N(t)-\ov\phi_N(s)}{\theta}_{H^1(\Om)} \Big| \\
&\quad \le \underset{\norm{\theta}_{H^1(\Om)}=1}{\sup}\; \int_s^t \Big| \inn{\delt\ov\phi_N(r)}{\theta}_{H^1(\Om)} \Big| \,dr \\
&\quad \le C\left(1+\tfrac{1}{\sqrt{L}}\right) \int_s^t 
\norm{\grad \mu_N(r)}_{L^2(\Om)} + \tfrac{1}{\sqrt{L}}\norm{\beta\nu_N(r) - \mu_N(r)}_{L^2(\Ga)} \,dr \\
&\quad \le C\left(1+\tfrac{1}{\sqrt{L}}\right) \abs{t-s}^{\frac 1 2} 
\int_s^t\norm{\grad \mu_N}_{L^2(0,T;L^2(\Om))}^2 + \tfrac{1}{L}\norm{\beta\nu_N - \mu_N}_{L^2(0,T;L^2(\Ga))}^2 \dtau.
\end{aligned}
\end{align}
Proceeding similarly, we derive the estimate
\begin{align}
\label{EST:DIFF:PSI}
\begin{aligned}
&\norm{\ov\psi_N(t)-\ov\psi_N(s)}_{H^1(\Ga)'} \\
&\quad \le C\left(1+\tfrac{1}{\sqrt{L}}\right) \abs{t-s}^{\frac 1 2} 
\int_s^t\norm{\gradg \nu_N}_{L^2(0,T;L^2(\Ga))}^2 + \tfrac{1}{L}\norm{\beta\nu_N - \mu_N}_{L^2(0,T;L^2(\Ga))}^2 \dtau.
\end{aligned}
\end{align}
In combination with the uniform bound \eqref{EST:ENG} this proves \eqref{EST:HLD}.
Furthermore, for any $t\in[0,T]$ we can find $n\in\{1,...,N\}$ and $\alpha\in [0,1]$ such that $t=\alpha n\tau + (1-\alpha)(n-1)\tau$.
Hence, it follows immediately that
\begin{align*}
&\|\ov\phi_N(t)-\phi_N(t)\|_{H^1(\Om)'} 
\le \| \alpha \phi^n_N(t) + (1-\alpha) \phi_N^{n-1}(t) - \phi_N^n(t) \|_{H^1(\Om)'} \\
&\quad = (1-\alpha)\,\| \phi_N^n(t)  - \phi_N^{n-1}(t) \|_{H^1(\Om)'} = (1-\alpha) \, \| \ov \phi_N(n\tau) - \ov \phi_N((n-1)\tau)\|_{H^1(\Om)'}.
\end{align*}
The estimate 
\begin{align*}
\|\ov\psi_N(t)-\psi_N(t)\|_{H^1(\Ga)'} 
\le   (1-\alpha) \, \| \ov \psi_N(n\tau) - \ov \psi_N((n-1)\tau)\|_{H^1(\Ga)'}
\end{align*}
can be derived analogously.
Now, applying \eqref{EST:HLD} with $t = n\tau$ and $s = (n-1)\tau$ verifies \eqref{EST:DIFF}. Moreover, proceeding similarly as in \eqref{EST:DIFF:PHI} and \eqref{EST:DIFF:PSI}, we obtain
\begin{align}
\label{EST:DTPHIPSI}
	 &\int_0^T \norm{\delt\ov\phi_N(t)}_{H^1(\Om)'}^2 + \norm{\delt\ov\psi_N(t)}_{H^1(\Ga)'}^2 \dt \\
	 \notag &\quad \le C \left(1+\tfrac{1}{L}\right) 
	 	\int_0^T \norm{\grad \mu_N(t)}_{L^2(\Om)}^2
	 		+ \norm{\gradg \nu_N(t)}_{L^2(\Ga)}^2 
	 		+ \tfrac{1}{L}\norm{\beta\nu_N(t) - \mu_N(t)}_{L^2(\Ga)}^2 \dt.
\end{align}
In combination with \eqref{EST:ENG}, this proves \eqref{EST:HLD:D} and thus, the claim is established.

\subparagraph{Step 5: Convergence results.} 
Now, we intend to use the estimates established in Step~3 and Step~4 to prove convergence of our approximate solutions.
We claim that, there exists a quadruplet of functions
$(\phi,\psi,\mu,\nu)$ satisfying the regularity conditions \eqref{REG:ROB} such that
for every $r \in (0,\tfrac 12)$, 
\begin{alignat}{3}
\label{CONV:PHI}
\notag \phi_N & \to \phi
\quad &&\hbox{weakly-$^*$ in $\L\infty {\Lx p}$, $a.e.$ in $\OT$}, 
\\
&&&\quad \hbox{and strongly in $\L\infty{\Hx1'}$},
\displaybreak[1]
\\[1ex]
\label{CONV:PSI}
\notag \psi_N & \to \psi
\quad &&\hbox{weakly-$^*$ in $\L\infty {\LxG q}$, $a.e.$ on $\GT$}, 
\\
&&&\quad \hbox{and strongly in $\L\infty{\HxG1'}$}, 
\\[1ex]
\label{CONV:BPHI}
\notag \bphi_N & \to \phi
\quad &&\hbox{weakly in $\H1 {\Hx1'}$},
\\
&&&\quad \hbox{and strongly in $C^{0,r}([0,T];\Hx1')$},
\\[1ex]
\label{CONV:BPSI}
\notag \bpsi_N & \to \psi
\quad &&\hbox{weakly in $\H1 {\HxG1'}$}, 
\\
&&&\quad \hbox{and strongly in $C^{0,r}([0,T];\HxG 1')$},
\\[1ex]
\label{CONV:MU} 
\mu_N & \to \mu
\quad &&\hbox{weakly in $\L2{\Hx1}$},
\\[1ex]
\label{CONV:NU}
\nu_N & \to \nu
\quad &&\hbox{weakly in $\L2 {\HxG1}$}
\end{alignat}
along a non-relabeled subsequence.

To prove this assertion we proceed similarly as in \cite[s.~4.6]{GK} where the approach is carried out in more detail.
Due to \eqref{EST:HLD:D} and the uniform estimates established in Step~3, there exist functions $(\phi,\psi,\mu,\nu)$ such that the first lines of \eqref{CONV:PHI}, \eqref{CONV:PSI}, \eqref{CONV:BPHI} and \eqref{CONV:BPSI}, as well as \eqref{CONV:MU} and \eqref{CONV:NU} directly
follow from the Banach--Alaoglu theorem. 
Furthermore, arguing as in \cite[s.~4.6]{GK}
and invoking the compactness of the embeddings $\Lx p \subset \Lx2 \cong \Lx2 ' \emb \Hx1'$ and $\LxG q \subset \LxG2 \cong \LxG2' \emb \HxG1'$,
the Arzelà--Ascoli theorem for functions with values in a Banach
space (see, e.g., \cite[Lem.~1]{simon}) implies that
\begin{alignat*}{2}
\bphi_N &\to \phi \quad &&\hbox{strongly in $C^0([0,T];\Hx1')$},\\
\bpsi_N &\to \psi \quad &&\hbox{strongly in $C^0([0,T];\HxG1')$}.
\end{alignat*}
Passing to the limit in the H\"older estimate \eqref{EST:HLD} we even get $\phi \in C^{0,1/2}([0,T];\Hx1')$ and $\psi \in C^{0,1/2}([0,T];\HxG1')$.
An interpolation argument now verifies the second lines of \eqref{CONV:BPHI} and \eqref{CONV:BPSI}.
Eventually, using the estimate \eqref{EST:DIFF}, we conclude the second lines of \eqref{CONV:PHI} and \eqref{CONV:PSI}. Thus, the claim is established.

\subparagraph{Step 6: Existence of weak solutions.}
We now intend to show that the limit quadruplet $(\phi,\psi,\mu,\nu)$
is a weak solution to the system \eqref{CH:ROB}. According to Step 5, the functions $(\phi,\psi,\mu,\nu)$ exhibit the regularity demanded in \eqref{REG:ROB}.
To show that they fulfill the weak formulation \eqref{WF:ROB}, we want to pass to the limit $N\to \infty$ in the time-discrete scheme. Using both the piecewise constant extension and the piecewise linear extension, the system \eqref{disc} can be expressed as
\begin{subequations}
	\label{discN}
\begin{align}
\label{discN:1}
\intOT \partial_t \bphi_N(t) \theta \dx\dt
&= -\intOT {\nabla \mu_N}\cdot{\nabla \theta} \dx\dt
- \intGT \tfrac 1L (\beta \nu_N-\mu_N) \theta \dS\dt	,
\\
\label{discN:2}
\intGT \partial_t \bpsi_N(t) \sigma \dS\dt
&=
-\intGT {\nabla_\Ga \nu_N}\cdot{\nabla_\Ga \sigma} \dS\dt
+ \intGT \tfrac 1L (\beta \nu_N-\mu_N) \beta \sigma \dS\dt,
\\ 
\label{discN:3}
\intOT \mu_N \zeta \dx\dt
&  =
\intOT -(J* \phi_N)\zeta 
+ F'(x,\phi_N) \zeta \dx\dt,
\\ 
\label{discN:4}
\intGT \nu_N \xi \dS \dt&=
\intGT -(K \cg \psi_N) \xi
+ G'(x,\psi_N) \xi
+ B'(x,\psi_N)  \xi \dS\dt
\end{align}
\end{subequations}
holding for every $\theta \in \L2 {H^1(\Om)}$, $\sigma \in \L2 {H^1(\Ga)$} 
and $\bet=(\zeta,\xi) \in  \L2 {\LL^\infty}$.
Recalling the convergence properties \eqref{CONV:BPHI}--\eqref{CONV:NU}, it is straightforward to pass to the limit in \eqref{discN:1} and \eqref{discN:2}.
Using \eqref{CONV:PHI} and \eqref{CONV:PSI}, the terms depending on $J$ and $K$ in the equations \eqref{discN:3} and \eqref{discN:4} can be handled by Lemma~\ref{LEM:JK}. Moreover, the convergence  of the terms depending on $F'$ and $G'$ follows from \eqref{CONV:PHI} and \eqref{CONV:PSI}, the growth conditions \eqref{ass:growth} and Lebesgue's general convergence theorem (see \cite[p.~60]{alt}).
Eventually, the convergence of the term depending on $B'$
follows directly from the assumption in \eqref{ass:pen}. This allows us to pass to the limit also in \eqref{discN:3} and \eqref{discN:4}. 
Hence, the quadruplet $(\phi,\psi,\mu,\nu)$ satisfies the weak formulation \eqref{WF:ROB}.
To verify the energy inequality \eqref{energy:ROB} we first observe that 
\begin{align*}
	E\big(\phi(t),\psi(t)\big) \le \underset{N\to\infty}{\lim\inf}\; E\big(\phi_N(t),\psi_N(t)\big)
	\quad\text{for almost all $t\in[0,T]$}
\end{align*}
follows by similar arguments as in Step 2 by means of Lemma~\ref{LEM:JK}, the assumption \eqref{ass:pen}, and the convexity of $F$ and $G$. Hence, taking the limes inferior in \eqref{EST:ENG} and recalling the convergence properties \eqref{CONV:MU} and \eqref{CONV:NU}, we conclude \eqref{energy:ROB}. This proves that $(\phi,\psi,\mu,\nu)$ is indeed a weak solution of the system \eqref{CH:ROB}.

\subparagraph{Step 7: Uniqueness.}
To prove uniqueness, we consider two solutions $(\phi_i,\psi_i,\mu_i,\nu_i)_i$, $i=1,2$ to the system \eqref{CH:ROB} as given by Theorem~\ref{THM:WP:ROB}.
For convenience, we write
\begin{align*}
(\phi,\psi,\mu,\nu) := (\phi_1,\psi_1,\mu_1,\nu_1) - (\phi_2,\psi_2,\mu_2,\nu_2).
\end{align*}
Let now $t_0 \in [0,T]$, $\eta\in \L2 {\Hx1}$ and $\rho\in \L2 {\HxG1}$ be arbitrary. Testing the difference of the weak formulations \eqref{WF:ROB:1}--\eqref{WF:ROB:4} written for $(\phi_1,\psi_1,\mu_1,\nu_1)$ and $(\phi_2,\psi_2,\mu_2,\nu_2)$, respectively, with the test functions
\begin{align*}
\theta  := 
\begin{cases}
\int_t^{t_0} \eta \ds \quad &\hbox{if $t \leq t_0$}\\
0 \quad &\hbox{if $t > t_0$}
\end{cases},
\quad \hbox{and} \quad
\sigma  :=
\begin{cases}
\int_t^{t_0} \rho \ds \quad &\hbox{if $t \leq t_0$}\\
0 \quad &\hbox{if $t > t_0$}
\end{cases},
\end{align*}
and integrating in time from $0$ to $t_0$, we obtain the relations
\begin{align*}
\intOto \phi \eta \dx\dt
&= - \intOto \nabla \left( \int_0^{t} \mu \ds \right) \cdot \nabla \eta \dx\dt
- \frac 1 L \intGto \left( \int_0^{t} (\beta \nu-\mu) \ds \right)  \eta \dS\dt,
\\
\intGto \psi \rho \dS\dt
&= - \intGto \gradg \left( \int_0^{t} \nu \ds \right) \cdot \gradg \rho \dS\dt
+ \frac 1 L \intGto \left( \int_0^{t} (\beta \nu-\mu) \ds \right)  \beta\rho \dS\dt,
\end{align*}
by means of Fubini's theorem.
Recall that $\SS_\Om (\phi)$ and $\SS_\Ga (\psi)$ can be represented by
\begin{align*}
\SS_\Om (\phi)= - \int_0^t \mu \ds + ct \beta, 
\quad
\SS_\Ga (\psi)= - \int_0^t \nu \ds + ct
\end{align*}
for some constant $c\in\R$.
Hence, choosing $\eta=\mu$ and $\rho=\nu$, a straightforward computation leads to  
\begin{align*}
\frac 12 \norm{(\phi,\psi)(t_0)}^2_{L,\beta,*}
=  -\intOto \phi \mu \dx\dt  -	\intGto \psi \nu \dS\dt.
\end{align*}
We now recall Remark~\ref{REM:weakLpq} which allows to test
\eqref{WF:ROB:3} and \eqref{WF:ROB:4} with functions $\zeta\in L^p(\Om)$ and $\xi\in L^q(\Ga)$.
We are thus allowed to choose the test functions
\begin{align*}
\zeta= \phi  \chi_{[0,t_0]}, \quad \xi= \psi \chi_{[0,t_0]}
\end{align*}
where $\chi_{[0,t_0]}$ stands for the characteristic function of the interval $[0,t_0]$. Plugging $\zeta$ and $\xi$ into \eqref{WF:ROB:3} and \eqref{WF:ROB:4} now gives
\begin{align*}
& \intOto \phi\mu  \dx\dt  
+ \intGto \psi\nu  \dS\dt \\
&\quad = - \intOto (J* \phi)\phi \dx\dt 
- \intGto (K \cg \psi)\psi \dS\dt	
+ \intGto (B'(\cdot,\psi_1)-B'(\cdot,\psi_2))\psi \dS\dt\\
&\qquad\quad 
+ \intOto (F'(\cdot,\phi_1)-F'(\cdot,\phi_2))\phi \dx\dt 
+ \intGto (G'(\cdot,\psi_1)-G'(\cdot,\psi_2))\psi \dS\dt.
\end{align*}
Invoking \eqref{EST:FG:2}, which implies uniform monotonicity of $F'$ and $G'$ with respect to their second argument, we obtain the estimate
\begin{align*}
&\intOto (F'(\cdot,\phi_1)-F'(\cdot,\phi_2))\phi \dx\dt
+ \intGto (G'(\cdot,\psi_1)-G'(\cdot,\psi_2))\psi \dS\dt \\
&\quad \geq
c_* \intOto |\phi|^2 \dx\dt
+ c_\cg \intGto |\psi|^2 \dS\dt
\end{align*}
where $c_*$ and $c_\cg$ are the constants from \eqref{EST:FGW}.
Furthermore, using the assumption \eqref{ass:pen:uniq}, Lemma~\ref{LEM:JK:UNIQ},
Lemma~\ref{LEM:NORMS}, and Young's inequality, we infer that for any $\delta>0$,
\begin{align*}
& \left|\, - \intOto (J* \phi)\phi \dx\dt - \intGto (K \cg \psi)\psi \dS\dt	
+ \intGto (B'(\cdot,\psi_1)-B'(\cdot,\psi_2))\psi \dS\dt \,\right|
\\ & \qquad \leq
\int_0^{t_0} 
	\big| 
		\biginn{(\phi \,{,}\, \psi)}
		{\big(J\ast\phi \,{,}\, K\cg\psi\big)}_{\HH^1}  
	\big| \dt
	+ \LIP \intGto |\psi|^2 \dS\dt
\\ & \qquad \leq
\int_0^{t_0} 
		\norm{(\phi,\psi)}_{(\HH^1)'}\,
		\big\| (J\ast\phi \,{,}\, K\cg\psi\big) \big\|_{\HH^1}  
	\dt
	+ \LIP \intGto |\psi|^2 \dS\dt
\\ & \qquad
\leq
C \int_0^{t_0} \norm{(\phi,\psi)}_{L,\beta,*} \Big( \norm{J * \phi}_{\Hx1}^2 + \norm{K \cg \psi}_{\HxG1}^2 \Big)^{1/2} \dt
+ \LIP \intGto |\psi|^2 \dS\dt
\\ & \qquad
\leq 
\delta  \intOto |\phi|^2 \dx\dt
+ (\delta+\LIP) \intGto |\psi|^2 \dS\dt
+ \frac{C}{\delta} \int_0^{t_0} \norm{(\phi,\psi)(t)}^2_{L,\beta,*} \dt.
\end{align*}
Combining the above estimates, we infer that 
\begin{align*}
&\frac 12 \norm{(\phi,\psi)(t_0)}^2_{L,\beta,*}
+ (c_*-\delta) \intOto |\phi|^2 \dx\dt
+ (c_\cg - \LIP - \delta) \intGto |\psi|^2 \dS\dt\\
&\quad \leq 
\frac{C}{\delta} \int_0^{t_0} \norm{(\phi,\psi)(t)}^2_{L,\beta,*} \dt.
\end{align*}
Recalling that $\LIP < c_\cg $, we fix $\delta = \frac 1 2 \min\{c_*,c_\cg-\LIP\}$, and thus, Gronwall's lemma yields
\begin{align*}
\norm{\SS (\phi,\psi)(t)}_{L,\beta} = \norm{(\phi,\psi)(t)}_{L,\beta,*} = 0 \quad \hbox{for almost all $t \in [0,T]$}.
\end{align*}
By the definition of the solution operator $\SS$ in \eqref{pre:S} we conclude that $\phi=0$ $\,a.e.$ in $\OT$ and $\psi=0$ $\,a.e.$ on $\GT$.
Finally, the identities $\mu=0$ $\,a.e.$ in $\OT$ and $\nu=0$ $\,a.e.$ on $\GT$ follow from \eqref{WF:ROB:3} and \eqref{WF:ROB:4} by a standard comparison argument. Hence, the proof of Theorem~\ref{THM:WP:ROB} is complete.
\end{proof}

\subsection{Higher regularity and strong well-posedness}

Under the additional assumptions \eqref{ass:dom:strong}--\eqref{ass:pen:strong}, we can even establish strong well-posedness of the system \eqref{CH:ROB}.

\begin{thm}[Strong well-posedness of the system \eqref{CH:ROB}]
	\label{THM:STRONG:ROB} 
	Let $T,L > 0$, $m \in \R$ and $\beta\neq 0$ be arbitrary and suppose that the conditions \eqref{ass:dom}--\eqref{ass:pen:strong} hold. 
	For any initial datum $(\phi_0,\psi_0)\in \HH^2_{\beta, m}$ let $(\phi,\psi,\mu,\nu)$ denote the unique weak solution to \eqref{CH:ROB} as given by Theorem~\ref{THM:WP:ROB}. Then the solution $(\phi,\psi,\mu,\nu)$ enjoys the following additional regularity:
	\begin{align*}
	(\phi,\psi) &\in \H1 {\LL^2}, 
	\quad
	(\mu,\nu) \in \L\infty {\HH^1}\cap \L2 {\HH^2}, 
	\quad
	\partial_{\bold n} \mu \in \L2 {\LxG2}.
	\end{align*}
	This means that $(\phi,\psi,\mu,\nu)$ is a strong solution to 
	system \eqref{CH:ROB} as all equations are satisfied $a.e.$ in $\OT$ and $\Om$, and $a.e.$ on $\GT $ and $\Ga$, respectively.
\end{thm}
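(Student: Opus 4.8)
The plan is to upgrade the weak solution from Theorem~\ref{THM:WP:ROB} by establishing one additional a~priori bound — a \emph{second energy estimate} controlling $\delt(\phi,\psi)$ in $\L2{\LL^2}$ and $(\mu,\nu)$ in $\L\infty{\HH^1}$ — and then to bootstrap the spatial regularity of $(\mu,\nu)$ by elliptic theory. Since the solution is produced by the minimising-movements scheme \eqref{disc}, I would carry out this estimate on the time-discrete level, with difference quotients in place of time derivatives and the polarization identity $2(a,a-b)=\norm{a}^2-\norm{b}^2+\norm{a-b}^2$ feeding a discrete Gronwall argument, so that all bounds are uniform in the step size $\tau$; weak and weak-$*$ compactness together with lower semicontinuity then transfer them to the limit. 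For transparency I describe the estimate in its formal (continuous) form.

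The core identity is obtained by testing the time-differentiated evolution law \eqref{WF:ROB:1}--\eqref{WF:ROB:2} with $(\delt\mu,\delt\nu)$, which yields
\[
\intO \delt\phi\,\delt\mu \dx + \intG \delt\psi\,\delt\nu \dS = -\tfrac12\ddt \norm{(\mu,\nu)}_{L,\beta}^2 .
\]
Differentiating the constitutive relations $\mu = -J\co\phi + F'(\cdot,\phi)$ and $\nu = -K\cg\psi + G'(\cdot,\psi) + B'(\cdot,\psi)$ in time and inserting them on the left, the uniform convexity $F''\ge c_*$, $G''\ge c_\cg$ from \eqref{EST:FG:2} together with the bound $\abs{B''}\le\gamma_{B''}<c_\cg$ from \eqref{ass:pen:strong} produce the coercive contributions $c_*\norm{\delt\phi}_{\Lx2}^2 + (c_\cg-\gamma_{B''})\norm{\delt\psi}_{\LxG2}^2$ on the left-hand side.

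The main obstacle is the pair of nonlocal terms $\intO (J\co\delt\phi)\,\delt\phi \dx$ and $\intG (K\cg\delt\psi)\,\delt\psi \dS$ arising from the convolutions: these cannot simply be absorbed by the convexity, because the operator norm of $J\co\,\cdot\,$ (which is of the order of $a^*$) may exceed $c_*$. I would instead trade one factor $\delt\phi$ for a gradient of $\mu$, using the weak bulk equation \eqref{WF:ROB:1} with the admissible test function $\theta = J\co\delt\phi\in\Hx1$ (legitimate by Lemma~\ref{LEM:JK:UNIQ}), so that
\[
\intO (J\co\delt\phi)\,\delt\phi \dx = -\intO \grad\mu\cdot\big((\grad J)\co\delt\phi\big)\dx - \tfrac1L\intG (\beta\nu-\mu)\,(J\co\delt\phi)\dS ,
\]
and the analogous manoeuvre with $\sigma = K\cg\delt\psi$ in \eqref{WF:ROB:2}. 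By Lemma~\ref{LEM:JK:UNIQ}, the bound \eqref{EST:DJ} and the trace theorem, the right-hand side is controlled by $\eps\norm{\delt\phi}_{\Lx2}^2$ plus terms of the form $C_\eps\norm{\grad\mu}_{\Lx2}^2$ and $C_\eps L^{-2}\norm{\beta\nu-\mu}_{\LxG2}^2$, all of which are dominated by $\norm{(\mu,\nu)}_{L,\beta}^2$ (with a constant depending on the fixed parameter $L$). Choosing $\eps$ small enough to absorb the $\delt\phi$ and $\delt\psi$ contributions into the coercive terms and invoking Gronwall's lemma yields
\[
\norm{(\mu,\nu)(t)}_{L,\beta}^2 + \int_0^t \norm{\delt\phi}_{\Lx2}^2 + \norm{\delt\psi}_{\LxG2}^2 \ds \le C\,\norm{(\mu_0,\nu_0)}_{L,\beta}^2 ,
\]
where $(\mu_0,\nu_0)$ is defined through the constitutive relations at the initial data; its $\HH^1$-norm is finite precisely because $(\phi_0,\psi_0)\in\HH^2_{\beta,m}\emb\LINF$ and the growth bound \eqref{ass:pot:strong} on $F''$, $G''$ is available. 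Together with the mean-value control of $\mu$ and $\nu$ (as in Step~3 of the preceding proof, now uniformly in time) and Poincar\'e's inequality, this gives $(\phi,\psi)\in\H1{\LL^2}$ and $(\mu,\nu)\in\L\infty{\HH^1}$.

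Finally I would bootstrap. With $\delt\phi\in\L2{\Lx2}$ and $\delt\psi\in\L2{\LxG2}$ now available, the weak formulation \eqref{WF:ROB:1}--\eqref{WF:ROB:2} states that, for almost every $t$, the pair $(\mu,\nu)$ is a weak solution of the elliptic bulk--surface problem
\[
-\Lap\mu = -\delt\phi \;\;\text{in}\;\Om, \qquad -\Lapg\nu + \beta\deln\mu = -\delt\psi \;\;\text{on}\;\Ga, \qquad L\deln\mu = \beta\nu-\mu \;\;\text{on}\;\Ga ,
\]
with right-hand sides in $\Lx2$ and $\LxG2$, respectively. Invoking the elliptic regularity theory for this transmission problem (where the $C^2$-regularity of $\Gamma$ from \eqref{ass:dom:strong} enters) as developed in \cite{knopf-liu}, I obtain $(\mu,\nu)\in\L2{\HH^2}$ together with $\deln\mu\in\L2{\LxG2}$. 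This is exactly the asserted regularity, and it guarantees that every equation of \eqref{CH:ROB} holds almost everywhere, so that $(\phi,\psi,\mu,\nu)$ is a strong solution.
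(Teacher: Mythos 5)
Your proposal is correct, and its overall architecture coincides with the paper's: a second energy estimate carried out on the minimising-movements scheme (testing the evolution equations with the difference quotients of $(\mu,\nu)$ and the differenced constitutive relations with the difference quotients of $(\phi,\psi)$, using the uniform convexity \eqref{EST:FG:2} together with $\abs{B''}\le\gamma_{B''}<c_\cg$ for coercivity, and the embedding $\HH^2\emb\LINF$ with \eqref{ass:pot:strong}, \eqref{ass:pen:strong} to get $(\mu,\nu)(0)\in\HH^1$), followed by an elliptic bootstrap for a.e.\ $t$. Where you genuinely deviate is the key obstacle, the nonlocal terms $\intO (J\co\delt\phi)\,\delt\phi\dx$ and $\intG(K\cg\delt\psi)\,\delt\psi\dS$: you eliminate them by inserting the admissible test functions $\theta=J\co\delt\phi$ and $\sigma=K\cg\delt\psi$ into the evolution equations, trading one difference quotient for $\grad\mu$, $\gradg\nu$ and the Robin term, whereas the paper estimates them as a duality pairing $\inn{(\delt\phi,\delt\psi)}{(J\co\delt\phi,K\cg\delt\psi)}_{\HH^1}$, passes to $\norm{(\delt\phi,\delt\psi)}_{(\HH^1)'}$ via Lemma~\ref{LEM:NORMS}, and then uses the identity
\begin{align*}
\norm{\deltau\bph^{n+1}}_{L,\beta,*}^2=\norm{\grad\mu^{n+1}}_{\Lx2}^2+\norm{\gradg\nu^{n+1}}_{\LxG2}^2+\tfrac1L\norm{\beta\nu^{n+1}-\mu^{n+1}}_{\LxG2}^2,
\end{align*}
exactly as in the uniqueness step. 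Both routes yield an absorbable bound of the same form; the practical difference is that your Robin boundary contribution is estimated with a factor $L^{-2}$ against the $L^{-1}$-weighted term in $\norm{\cdot}_{L,\beta}^2$, so your constants depend on $L$, while the paper's are uniform in $L$ (up to the initial-data term) --- immaterial for the theorem as stated, but this uniformity is precisely what the paper reuses in \eqref{UNI:6}--\eqref{UNI:7} for the singular limits $L\to 0$ and $L\to\infty$. Two minor remarks: in the bootstrap the paper decouples the elliptic system (bulk Neumann problem with datum $\tfrac1L(\beta\nu-\mu)\in H^{1/2}(\Ga)$, then the surface Poisson equation), citing standard references, rather than the coupled transmission regularity you attribute to \cite{knopf-liu}, so your variant needs that reference to actually supply $H^2$-theory for the Robin-coupled system; and on the discrete level the time-differentiated $B'$-term should be handled through the Lipschitz bound $\LIP=\gamma_{B''}$ of \eqref{ass:pen:uniq} rather than a literal chain rule.
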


\begin{rem}
	\label{REM:STRONG:WP}
	We point out that $F(\cdot,\phi_0) \in \Lx1$ and $B(\cdot,\psi_0)$, $G(\cdot,\psi_0) \in \LxG1$ are satisfied as
	a consequence of $(\phi_0,\psi_0)\in \HH^2_{\beta, m}$, assumption \eqref{ass:pot:strong}, and the continuous embedding $\HH^2\emb\LINF$.
\end{rem}

\begin{proof}[Proof of Theorem~\ref{THM:STRONG:ROB}]
The formal idea behind this proof is to test \eqref{WF:ROB:1} with $-\delt \mu$,
\eqref{WF:ROB:2} with $-\delt \nu$, the time derivative of \eqref{WF:ROB:3} with $\delt \phi$ and the time derivative of \eqref{WF:ROB:4} with $\delt \psi$. 
This strategy can be made rigorous by following the line of argument in \cite[s.~3.2]{KLLM} (which was in turn greatly inspired by the approach in \cite[s.~4.4]{Colli-Fukao}). To this end, we use once more the time-discrete approximate solution $(\phi^n, \psi^n, \mu^n, \nu^n)_{n=1,...,N}$ constructed in Step $1$ of the proof of Theorem~\ref{THM:WP:ROB}. We recall that the discretized weak formulation \eqref{disc} is satisfied and we define the backward difference quotients
\begin{align*}
	(\deltau \phi^{n+1},\deltau \psi^{n+1},\deltau \mu^{n+1},\deltau \nu^{n+1})
	:= \frac 1\tau \Big[ (\phi^{n+1},\psi^{n+1},\mu^{n+1},\nu^{n+1})
		- (\phi^{n},\psi^{n},\mu^{n},\nu^{n}) \Big].
\end{align*}
For brevity, we will also use the notation
\begin{align*}
	\deltau F' := \frac 1 \tau \big[ F'(\cdot,\phi^{n+1}) - F'(\cdot,\phi^{n}) \big], \quad
	\deltau H' := \frac 1 \tau \big[ H'(\cdot,\psi^{n+1}) - H'(\cdot,\psi^{n}) \big] \;\text{for}\,\; H\in\{G,B\}.
\end{align*}
In the following, the letter $C$ will denote generic positive constants independent of $n$, $N$, $\tau$ and $L$ which may change their value from line to line.
For any arbitrary $n \in \{0, ..., N-1\}$, we test \eqref{disc:1} with $\theta= - \deltau \mu^{n+1}\in \Hx1$ and \eqref{disc:2}
with $\sigma= - \deltau \nu^{n+1}\in \HxG1$. Adding the resulting equations, we conclude that
\begin{align}
\begin{aligned}
	& - \intO \deltau \phi^{n+1}\deltau \mu^{n+1} \dx 
	- \intG \deltau \psi^{n+1}\deltau \nu^{n+1} \dS
	\\ & \quad
	=
	\frac 1{2\tau} \Big( \norm{\nabla \mu^{n+1}}^2_{\Lx2}
		-\norm{\nabla \mu^{n}}^2_{\Lx2}
		+\norm{\nabla \mu^{n+1}-\nabla \mu^{n}}^2_{\Lx2} \Big) 
	\\ &\label{strong:1} \qquad
	+
	\frac 1{2\tau} \Big( \norm{\nabla_\Ga \nu^{n+1}}^2_{\LxG2}
		-\norm{\nabla_\Ga \nu^{n}}^2_{\LxG2}
		+\norm{\nabla_\Ga \nu^{n+1}-\nabla_\Ga \nu^{n}}^2_{\LxG2} \Big) 
	\\ & \qquad
	+
	\frac 1{2\tau} \frac 1{L} 
	\Big( \norm{\beta \nu^{n+1}-\mu^{n+1}}^2_{\LxG2}
		-\norm{\beta \nu^{n}-\mu^{n}}^2_{\LxG2}
	\\ & \qquad\qquad\qquad\qquad
		+\norm{\beta (\nu^{n+1}-\nu^{n}) - (\mu^{n+1} -\mu^{n})}^2_{\LxG2} \Big) .
\end{aligned}	
\end{align}
Next, we take the difference of \eqref{disc:3} and \eqref{disc:4} written at the step $n+1$ and at the step $n$, respectively, and we add the resulting equations.
Using $\zeta = \tfrac 1\tau \deltau \phi^{n+1} \in L^p(\Om)$ and $\xi = \tfrac 1\tau \deltau \psi^{n+1} \in L^q(\Ga)$ as test functions, we obtain
\begin{align}
	\label{strong:2}
	&  \intO \deltau \mu^{n+1} \deltau \phi^{n+1} \dx +
	\intG \deltau \nu^{n+1} \deltau \psi^{n+1} \dS \notag 
	\\ 
	&\begin{aligned}
	& \quad=
	\intO -(J * \deltau \phi^{n+1}) \deltau \phi^{n+1}+\deltau F'\, \deltau \phi^{n+1} \dx
	\\ & \qquad 
	+ \intG -(K \cg \deltau \psi^{n+1}) \deltau \psi^{n+1} 
	+ \deltau G'\,\deltau \psi^{n+1} \dS
	\\ & \qquad
	+\intG \deltau B' \, \deltau \psi^{n+1} \dS	.
	\end{aligned}
\end{align}
From the uniform convexity of the potentials (see \eqref{EST:FG:2}), we infer that 
\begin{align*}
	&\intO \deltau F' \deltau \phi^{n+1} \dx
	+ \intG \deltau G' \deltau \psi^{n+1} \dS  
	\geq 
	c_* \norm{\deltau\phi^{n+1}}^2_{\Lx2}
	+ c_\cg \norm{\deltau\psi^{n+1}}^2_{\LxG2}.
\end{align*}
Using \eqref{ass:pen:uniq}, Lemma~\ref{LEM:JK:UNIQ} and Young's inequality, we deduce the estimate
\begin{align*}
	& \left| \intO -(J * \deltau \phi^{n+1}) \deltau \phi^{n+1} \dx
	+\intG \big( -(K \cg \deltau \psi^{n+1})
	+ \deltau B'\big) \deltau \psi^{n+1}\dS \right|
	\\[1ex]  & \quad
	\leq 
	\delta \norm{\deltau \phi^{n+1}}^2_{\Lx2}
	+ (\delta + \LIP) \norm{\deltau \psi^{n+1}}^2_{\LxG2}
	+ \frac C \delta \norm{\deltau  \bph^{n+1}}^2_{L,\beta, *}
\end{align*}
for any $\delta>0$, where $\bph^{n+1}=(\phi^{n+1},\psi^{n+1})$. 
According to \eqref{DEF:MUNU:0} and \eqref{DEF:MUNU}, the functions $\mu^{n+1}$ and $\nu^{n+1}$ can be expressed as
\begin{align*}
	\mu^{n+1} = \SS_\Om(\deltau\bph^{n+1}) + \beta c^{n+1},\quad \nu^{n+1} = \SS_\Ga(\deltau\bph^{n+1}) + c^{n+1}.
\end{align*}
Consequently, a straightforward computation gives
\begin{align*}
	\norm{\deltau  \bph^{n+1}}^2_{L,\beta, *}
	= \norm{\nabla \mu^{n+1}}^2_{\Lx2} 
	+  \norm{\nabla_\Ga \nu^{n+1}}^2_{\LxG2} 
	+  \tfrac 1L \norm{\beta \nu^{n+1} - \mu^{n+1} }^2_{\LxG2}.
\end{align*}
Hence, adding \eqref{strong:1} and \eqref{strong:2}, and using the above estimates, we obtain
\begin{align*}
	& \norm{\nabla \mu^{n+1}}^2_{\Lx2}
		-\norm{\nabla \mu^{n}}^2_{\Lx2}
	+ \norm{\nabla_\Ga \nu^{n+1}}^2_{\LxG2}
		-\norm{\nabla_\Ga \nu^{n}}^2_{\LxG2}
	\\ & \notag\qquad
	+
	\tfrac 1{L} \big(\norm{\beta \nu^{n+1}-\mu^{n+1}}^2_{\LxG2}
	-\norm{\beta \nu^{n}-\mu^{n}}^2_{\LxG2} \big)
	\\ & \notag\qquad
	+ (c_*-\delta) \norm{\deltau\phi^{n+1}}^2_{\Lx2}
	+ (c_\cg - \LIP -\delta)\norm{\deltau\psi^{n+1}}^2_{\LxG2}
	\\[1ex] & \quad
	\leq C \big( \norm{\nabla \mu^{n+1}}^2_{\Lx2} 
	+  \norm{\nabla_\Ga \nu^{n+1}}^2_{\LxG2} 
	+  \tfrac 1L \norm{\beta \nu^{n+1} - \mu^{n+1} }^2_{\LxG2}\big).
\end{align*}
We now fix $\delta = \frac 1 2 \min\{c_*,c_\cg-\LIP\}$ and sum from $n=0$ to an arbitrary index $j< N-1$ to infer that
\begin{align}
\label{EST:STR}
\begin{aligned}
	& \norm{\nabla \mu^{j+1}}^2_{\Lx2}
	+ \norm{\nabla_\Ga \nu^{j+1}}^2_{\LxG2}
	+\tfrac 1{L} \norm{\beta \nu^{j+1}-\mu^{j+1}}^2_{\LxG2} 
	\\ & \qquad
	+ \frac {c_*} 2  \int_0^{j \tau} \norm{\delt\bphi_N}^2_{\Lx2} \dt
	+ \frac 1 2 (c_\cg-\LIP) \int_0^{j \tau}  \norm{\delt\bpsi_N}^2_{\LxG2}\dt
	\\[1ex] & \quad
	\leq  \norm{\nabla \mu_N(0)}^2_{\Lx2} 
	+  \norm{\nabla_\Ga \nu_N(0)}^2_{\LxG2} 
	+  \tfrac 1L \norm{\beta \nu_N(0) - \mu_N(0) }^2_{\LxG2}
	\\ & \qquad
	+ C \int_0^T \big( \norm{\nabla \mu_N}^2_{\Lx2} 
	+  \norm{\nabla_\Ga \nu_N}^2_{\LxG2} 
	+  \tfrac 1L \norm{\beta \nu_N - \mu_N }^2_{\LxG2}\big) \ds,
\end{aligned}
\end{align}
where $\bphi_N,\bpsi_N$ denote the piecewise linear extension introduced
in Step 1 of the proof of Theorem~\ref{THM:WP:ROB}. 
By the fundamental theorem of calculus of variations we deduce that $\mu_N(0)=\mu^0$ and $\nu_N(0)=\nu^0$ satisfy the equations
\begin{subequations}
\begin{alignat}{2}
\label{EQ:COMP:1}
	\mu^{0} &= - (J* \phi_0)+ F'(\cdot,\phi_0) 
	&&\quad a.e.\;\text{in}\;\Omega,\\
\label{EQ:COMP:2}	
	\nu^{0} &= - (K \cg \psi_0) + G'(\cdot,\psi_0) + B'(\cdot,\psi_0) 
	&&\quad a.e.\;\text{on}\;\Gamma.
\end{alignat}
\end{subequations}
Since $\bph_0 \in \HH^2_{\beta,m}\,$, we use Lemma~\ref{LEM:JK:UNIQ}, the assumptions \eqref{ass:pot:strong}, \eqref{ass:pen:strong} and the continuous embedding $\HH^2\emb \LINF$
to deduce that the right-hand side of \eqref{EQ:COMP:1} belongs to $H^1(\Om)$ whereas the right-hand side of \eqref{EQ:COMP:2} belongs to $H^1(\Ga)$.
By comparison, we infer that $\mu_N(0)=\mu^0 \in \Hx1$ and $\nu_N(0)=\nu^0 \in \HxG1$.
Invoking the uniform bound \eqref{unif_est}, we conclude from \eqref{EST:STR} that
\begin{align}
\label{EST:STR:2}
\begin{aligned}
& \norm{\nabla \mu^{j+1}}^2_{\Lx2}
+ \norm{\nabla_\Ga \nu^{j+1}}^2_{\LxG2}
+ \tfrac 1{L} \norm{\beta \nu^{j+1}-\mu^{j+1}}^2_{\LxG2} 
\\ & \qquad
+ \frac {c_*} 2  \int_0^{j \tau} \norm{\delt\bphi_N}^2_{\Lx2} \dt
+ \frac 1 2 (c_\cg-\LIP) \int_0^{j \tau}  \norm{\delt\bpsi_N}^2_{\LxG2}\dt
\le C
\end{aligned}
\end{align}
for all $j\in \{0,...,N-1\}$. This directly implies the uniform bound
\begin{align}
\label{EST:STR:3}
\begin{aligned}
	&\norm{\delt\bphi_N}_{\L2 {\Lx2}}
	+ \norm{\delt\bpsi_N}_{\L2 {\LxG2}} \\
	&\quad + \norm{\mu_N}_{\L\infty {\Hx1}}
	+ \norm{\nu_N}_{\L\infty {\HxG1}}
	\le C.
\end{aligned}
\end{align}
Now, invoking the Banach--Alaoglu theorem, we conclude that
\begin{align*}
	(\phi,\psi) \in \H1 {\LL^2}, \quad (\mu,\nu) \in \L\infty {\HH^1}
\end{align*}
due to the uniqueness of the weak limit. In particular, we obtain the bound
\begin{align}
	\label{BND:PP:L}
	\norm{(\phi,\psi)}_{H^1(0,T;\LL^2)} + \norm{(\mu,\nu)}_{L^\infty(0,T;\HH^1)} \le C.
\end{align}
Recall that, according to \eqref{WF:ROB:1} and \eqref{WF:ROB:2}, the elliptic problems 
\begin{align*}
	\left\{
	\begin{aligned}
	\Delta \mu &= \delt \phi &&\hbox{in $\Om$,}
	\\
	\deln \mu &= \tfrac 1L (\beta \nu - \mu) &&\hbox{on $\Ga$,}
	\end{aligned}
	\right.
	\qquad 
	\Delta_\Ga \nu = \delt \psi + \tfrac \beta L (\beta \nu - \mu) \;\;\hbox{on $\Ga$,}
\end{align*}
are satisfied in the weak sense. 
Employing elliptic regularity theory (see, e.g., \cite[s.~5, Prop.~7.7]{taylor} for the Poisson--Neumann problem in the bulk and \cite[s.~5, Thm.~1.3]{taylor} for Poisson's equation on the boundary), we obtain that $\mu(t)\in H^2(\Om)$ and $\nu(t)\in H^2(\Ga)$ for almost all $t\in [0,T]$ with
\begin{align}
	\label{BND:MU:L}
	\begin{split}
	\norm{\mu(t)}_{H^2(\Om)}^2 &\le C \left( \norm{\mu(t)}_{H^1(\Om)}^2 + \norm{\delt\phi(t)}_{L^2(\Om)}^2 + \frac{1}{L^2}\norm{\beta\nu(t)-\mu(t)}_{H^{1/2}(\Ga)}^2 \right) \\
	&\le C \left( \Big(1+\frac{1}{L^2}\Big)\norm{\mu(t)}_{H^1(\Om)}^2 + \norm{\delt\phi(t)}_{L^2(\Om)}^2 + \frac{\beta^2}{L^2}\norm{\nu(t)}_{H^1(\Ga)}^2  \right), 
	\end{split}
	\\[1ex]
	\label{BND:NU:L}
	\norm{\nu(t)}_{H^2(\Ga)}^2 &\le C \left( \norm{\nu(t)}_{H^1(\Ga)}^2 + \norm{\delt\psi(t)}_{L^2(\Ga)}^2 + \frac{1}{L^2}\norm{\beta\nu(t)-\mu(t)}_{L^2(\Ga)}^2 \right).
\end{align}
Integrating the above estimates in time from $0$ to $T$, we conclude that $(\mu, \nu ) \in \L2{\HH^2}$ and $\deln \mu \in \L2 {\LxG2}$.
In combination with \eqref{REG:ROB}, this proves the regularity assertion. It directly follows that $(\phi,\psi,\mu,\nu)$ is even a strong solution to the system \eqref{CH:ROB} and thus, the proof of Theorem~\ref{THM:WP:ROB} is complete. 
\end{proof}

\section{Singular limits of the Robin model}
This section is devoted to investigating the asymptotic limits of the system \eqref{CH:ROB}
as $L$ tends to zero or to infinity. To this end, we prove that the sequence of solutions 
$(\phi^L,\psi^L,\mu^L,\nu^L)$ to the Robin model \eqref{CH:ROB} converges in a suitable topology such that the limit
is a weak solution to the corresponding limiting system. Therefore, let us first introduce 
the notions of weak solutions for these systems.

\subsection{Notion of weak solutions to the limit models}

We now present the definitions of weak solutions to the systems \eqref{CH:DIR}, \eqref{CH:DEC:BULK} and \eqref{CH:DEC:SURF}.

\begin{defn}[Definition of a weak solution to \eqref{CH:DIR}] \label{DEF:WP:DIR}
	Let $T>0$, $m\in\R$, $\beta> 0$ and $(\phi_0,\psi_0)\in \HHBMo$ be arbitrary and suppose that the conditions \eqref{ass:dom}--\eqref{ass:pen} hold. 
	The triplet $(\phi,\psi,\mu)$ is called a weak solution of the system \eqref{CH:DIR} if the following holds:
	\begin{enumerate}[label=$(\mathrm{\roman*})$, ref = $\mathrm{\roman*}$]
		\item The functions $(\phi,\psi,\mu)$ have the following regularity
		\begin{align}
		\label{REG:DIR}
		\left\{
		\begin{aligned}
		\phi & \in \L\infty {\Lx p}, \\
		\psi & \in \L\infty {\LxG q}, \\
		(\phi,\psi)&\in C^{0,\frac 12}([0,T];(\DDBI)')\cap H^1(0,T;(\DDBI)'),\\
		\mu &\in L^2\big(0,T;\VV^1\big),
		\end{aligned}
		\right.
		\end{align}
		and it holds that $(\phi(t),\psi(t))\in \HHBMo$ for almost all $t\in[0,T]$. 
		\item
		The weak formulation
		\begin{subequations}
			\label{WF:DIR}
			\begin{align}
			\label{WF:DIR:1}
			&\biginn{\big( \delt \phi,\delt \psi \big)}{(\theta,\sigma)}_{\DDBI}  =  - \intO \grad\mu \cdot \grad \theta \dx - \frac 1 \beta \intG \gradg\mu \cdot \gradg \sigma \dS,
			\\
			\label{WF:DIR:2}
			&\intO \mu \eta \dx = \intO - (J * \phi)\eta + F'(\cdot,\phi)\eta \dx, \\
			\label{WF:DIR:3}
			&\intG \mu \theta \dS = \intG - \beta(K \cg \psi)\theta + \beta G'(\cdot,\psi)\theta 
			+ \beta B'(\cdot,\psi)\theta  \dS	 
			\end{align}
		\end{subequations}		
		is satisfied almost everywhere in $[0,T]$ for all test functions $(\theta,\sigma)\in \DDBI$, $\eta\in L^\infty(\Omega)$ and $\theta\in L^\infty(\Gamma)$. Moreover, the initial conditions $\phi\vert_{t=0}=\phi_0$ and $\psi\vert_{t=0}=\psi_0$ are satisfied $a.e.$ in $\Omega$ and on $\Gamma$, respectively.
		\item The energy inequality 
		\begin{align}
		\label{energy:DIR}
		&E\big(\phi(t),\psi(t)\big) 
		+ \frac 1 2 \int_0^t 
		\norm{\grad\mu(s)}_{L^2(\Omega)}^2 
		+ \norm{\gradg\nu(s)}_{L^2(\Gamma)}^2
		\ds 
		\le E(\phi_0,\psi_0)
		\end{align}
		is satisfied for all $t\in[0,T]$.
	\end{enumerate}
\end{defn}

\medskip

\begin{defn}[Definition of weak solutions to \eqref{CH:DEC:BULK} and \eqref{CH:DEC:SURF}]
\label{DEF:LTOINFTY}
Let $T>0$, and $(\phi_0,\psi_0)\in \LL^2$ be arbitrary, and suppose that the conditions \eqref{ass:dom}--\eqref{ass:pen} hold. 
The pairs $(\phi,\mu)$ and $(\psi,\nu)$ are called weak solutions of the systems \eqref{CH:DEC:BULK}
and \eqref{CH:DEC:SURF} if the following holds:
	\begin{enumerate}[label=$(\mathrm{\roman*})$, ref = $\mathrm{\roman*}$]
		\item The functions $(\phi,\mu)$ and $(\psi,\nu)$ have the following regularity
		\begin{align}
		\label{REG:NEUM}
		\left\{
		\begin{aligned}
		&\phi  \in \H1 {\Hx1'} \cap \L\infty {\Lx p},
		\quad
		&&\mu \in \L2 {\Hx1}, \\
		&\psi  \in \H1 {{\HxG1'}}\cap \L\infty {\LxG q}, \quad
		&&\nu \in L^2\big(0,T; \HxG1 \big)
		\end{aligned}
		\right.
		\end{align}
		and it holds that $\meano{\phi(t)}=\meano{\phi_0}$ and $\meang{\psi(t)}=\meang{\psi_0}$ for almost all $t\in [0,T]$.
		\item
		The weak formulations
		\begin{subequations}
			\label{WF:NEU:BULK}
			\begin{align}
			\label{WF:NEU:BULK:1}
			&\inn{\delt \phi}{ \theta}_{H^1(\Omega)}=  {-}  \intO \grad\mu \cdot \grad \theta \dx,
			\\
			\label{WF:NEU:BULK:2}
			&\intO \mu \zeta \dx = \intO - (J * \phi)\zeta + F'(\cdot,\phi)\zeta \dx,
			\end{align}
		\end{subequations}
		and 
		\begin{subequations}
			\label{WF:NEU:SUR}
			\begin{align}
			\label{WF:NEU:SUR:1}
			&\inn{\delt \psi}{ \sigma}_{H^1(\Ga)} =  - \intG \gradg\nu \cdot \gradg \sigma \dS ,
			\\
			\label{WF:NEU:SUR:2}
			&\intG \nu  \xi  \dS = \intG - (K \cg \psi) \xi  +  G'(\cdot,\psi) \xi  
			+  B'(\cdot,\psi) \xi   \dS	
			\end{align}
		\end{subequations}
		are satisfied almost everywhere in $[0,T]$ for all test functions $\theta \in \Hx1$,
		$\zeta\in L^\infty(\Omega)$ and $\sigma\in \HxG1, \xi \in  L^\infty(\Gamma)$. Moreover, the initial conditions $\phi\vert_{t=0}=\phi_0$ and $\psi\vert_{t=0}=\psi_0$ are satisfied $a.e.$ in $\Omega$ and on $\Gamma$, respectively.
		\item The energy inequalities 
		\begin{subequations}
		\label{energy:NEU}
		\begin{align}
		\label{energy:NEU:BULK}
		E_\textnormal{bulk}\big(\phi(t)\big) 
		+ \frac 1 2 \int_0^t 
		\norm{\grad\mu(s)}_{L^2(\Omega)}^2 
		\ds 
		&\le E_\textnormal{bulk}(\phi_0),
		\\
		\label{energy:NEU:SURF}
		E_\textnormal{surf}\big(\psi(t)\big) 
		+ E_\textnormal{pen}\big(\psi(t)\big)
		+ \frac 1 2 \int_0^t 
		\norm{\gradg\nu(s)}_{L^2(\Gamma)}^2
		\ds 
		&\le E_\textnormal{surf}(\psi_0) + E_\textnormal{pen}(\psi_0)
		\end{align}
		\end{subequations}
		are satisfied for all $t\in[0,T]$.
	\end{enumerate}
\end{defn}

\begin{rem}
	We are convinced that weak well-posedness of the Dirichlet model \eqref{CH:DIR} and the decoupled model (\eqref{CH:DEC:BULK},\eqref{CH:DEC:SURF}) can be proved in a similar fashion to the proof of Theorem~\ref{THM:WP:ROB} by exploiting the gradient flow equations \eqref{GFE:DIR}, \eqref{GFE:NEUM:BULK} and \eqref{GFE:NEUM:SURF}.
	However, as we want to investigate the singular limits $L\to 0$ and $L\to\infty$ of the Robin model anyway, we proceed differently and construct the weak solutions as the singular limits of solutions to the system \eqref{CH:ROB}. 
	
	The weak and strong well-posedness of the Dirichlet model will be established in Theorem~\ref{THM:Ltozero}, whereas the weak and strong well-posedness of the decoupled model will be presented in Theorem~\ref{THM:Ltoinfty}.
\end{rem}

\subsection{Uniform bounds}

To investigate the singular limits, we first derive uniform bounds on solutions of the Robin model.

\paragraph{Uniform bounds on weak solutions.}
Suppose that $T>0$, $\beta\neq 0$, $m\in\R$ and that \eqref{ass:dom}--\eqref{ass:pen} hold. Let $\bph_0=(\phi_0,\psi_0)\in \HHBMo$ with $F(\cdot,\phi_0)\in L^1(\Om)$ and $G(\cdot,\psi_0), B(\cdot,\psi_0)\in L^1(\Ga)$ be arbitrary. For $L>0$, let $(\phi^L,\psi^L,\mu^L,\nu^L)$ denote the corresponding weak solution to the system \eqref{CH:ROB} in the sense of Definition~\ref{DEF:WP:ROB}. 
In the following, the letter $C$ will denote a generic positive constant that does not depend on the parameter $L$. 
From the energy inequality \eqref{energy:ROB} and the growth conditions in \eqref{ass:growth} we infer that
\begin{align}
	\label{UNI:1}
	\norm{\phi^L}_{L^\infty(0,T;L^p(\Om))}
		+ \norm{\psi^L}_{L^\infty(0,T;L^q(\Ga))} 
	&\le C, \\
	\label{UNI:2}
	\norm{\grad\mu^L}_{L^2(\Om_T)}^2 
		+ \norm{\gradg\nu^L}_{L^2(\Ga_T)}^2
		+ \frac 1 L \norm{\beta\nu^L-\mu^L}_{L^2(\Ga_T)}^2
	&\le C.
\end{align}
On the basis of these estimates, we can now follow the line of argument in Step 3 of the proof of Theorem~\ref{THM:WP:ROB} to deduce the uniform bound
\begin{align}
	\label{UNI:3}
	\norm{\mu^L}_{L^2(0,T;H^1(\Om))} 
	+ \norm{\nu^L}_{L^2(0,T;H^1(\Ga))}
	\le C.
\end{align}
Moreover, proceeding as in Step 4 of the proof of Theorem~\ref{THM:WP:ROB}, we derive the estimate
\begin{align}
\label{UNI:4}
\norm{\delt\phi^L}_{L^2(0,T;H^1(\Om)')}
+ \norm{\delt\psi^L}_{L^2(0,T;H^1(\Ga)')} 
&\le C \left( 1 + \frac{1}{\sqrt{L}} \right).
\end{align}
In the case $\beta>0$, we choose an arbitrary pair of test functions $(\theta,\sigma)\in\DDBI$. We now test \eqref{WF:ROB:1} with $\theta$ and \eqref{WF:ROB:2} with $\sigma$. Adding the resulting equations we observe that a cancellation occurs due to the relation $\theta\vert_\GT = \beta\sigma$ $a.e.$ on $\GT$. Recalling that $(\delt\phi^L,\delt\psi^L) \in (\HH^1)' \subset (\DDBI)'$ $a.e.$ on $[0,T]$, we obtain
\begin{align*} 
\biginn{(\delt\phi^L,\delt\psi^L)}{(\theta,\sigma)}_{\DDBI}
&= \inn{\delt\phi^L}{\theta}_{H^1(\Om)}
	+ \inn{\delt\psi^L}{\sigma}_{H^1(\Ga)} \\
&= -\intO \grad\mu^L\cdot\grad\theta \dx
	- \intG \gradg\nu^L\cdot\gradg\sigma \dS
\end{align*}
$a.e.$ on $[0,T]$. Invoking the uniform bound \eqref{UNI:2}, we conclude that
\begin{align}
\label{UNI:5}
\norm{(\delt \phi^L,\delt \psi^L)}_{L^2(0,T;(\DDBI)')}
\le C 
\quad \text{if}\; \beta>0.
\end{align}

\medskip

\paragraph{Additional uniform bounds on strong solutions.}
Let now $\beta\neq 0$ be arbitrary again and in
addition, we suppose that the conditions \eqref{ass:dom:strong}--\eqref{ass:pen:strong} hold and that $\bph_0\in \HH^2_{\beta, m}$. Then, according to Theorem~\ref{THM:STRONG:ROB}, the quadruplet $(\phi^L,\psi^L,\mu^L,\nu^L)$ is the unique strong solution of the system \eqref{CH:ROB}. We already know from \eqref{BND:PP:L} that
\begin{align}
\label{UNI:6}
\norm{(\phi,\psi)}_{H^1(0,T;\LL^2)} + \norm{(\mu,\nu)}_{L^\infty(0,T;\HH^1)} \le C.
\end{align}
Integrating \eqref{BND:MU:L} and \eqref{BND:NU:L} in time from $0$ to $T$, we can use \eqref{UNI:2} and \eqref{UNI:3} to infer that
\begin{align}
\label{UNI:7}
\norm{\mu}_{L^2(0,T;H^2(\Om))} \le C \left(1+\frac{1}{L}\right)
\quad\text{and}\quad
\norm{\nu}_{L^2(0,T;H^2(\Ga))} \le C \left(1+\frac{1}{\sqrt{L}}\right).
\end{align}

\medskip

We can now use these estimates to investigate the singular limits $L\to 0$ and $L\to \infty$ of the Robin model \eqref{CH:ROB}.

\subsection{The singular limit $L\to 0$ and well-posedness of the Dirichlet model}

\begin{thm}[The limit $L\to 0$ and well-posedness of \eqref{CH:DIR}] \label{THM:Ltozero} $\;$\\
	Let $T,L>0$, $m\in\R$ and $\beta>0$ be arbitrary and suppose that the assumptions \eqref{ass:dom}--\eqref{ass:pen} hold. 
	For any initial datum $(\phi_0,\psi_0)\in \HHBMo$ satisfying $F(\cdot,\phi_0)\in L^1(\Omega)$ and $G(\cdot,\psi_0)$, $B(\cdot,\psi_0)\in L^1(\Gamma)$, let $(\phi^L,\psi^L,\mu^L,\nu^L)$ denote the corresponding unique weak solution to the system \eqref{CH:ROB} in the sense of Definition~\ref{DEF:WP:ROB}. Then the following holds:
	\begin{enumerate}
	\item[\textnormal{(a)}] There exist functions $(\phi_*,\psi_*,\mu_*,\nu_*)$ satisfying 
	\begin{align}
	\label{REG:LIM:0}
	\left\{
	\begin{aligned}
	\phi_* &\in 
	\L\infty {\Lx p}, 
	&\quad \psi_* &\in 
	\L\infty {\LxG q}, \\
	(\phi_*,\psi_*) &\in H^1(0,T;(\DDBI)'),\\
	\mu_* &\in L^2\big(0,T;H^1(\Om)\big), 
	&\quad\, \nu_* &\in L^2\big(0,T;H^1(\Ga)\big),
	\end{aligned}
	\right.
	\end{align}
	and 
	\begin{align}
		\label{MASS:DIR}
		(\phi(t),\psi(t)) \in \HHBMo \quad \text{for almost all $t\in [0,T]$}
	\end{align}
	such that
	\begin{align}
	\label{CONV:0}
	\left\{\;
	\begin{aligned}
		\phl & \to \phi_* &&\quad \hbox{weakly-$^*$ in $\L\infty {\Lx p} $, 
			and $a.e.$ in $\OT$,}
		\\
		\psl & \to \psi_* &&\quad \hbox{weakly-$^*$ in $\L\infty {\LxG q} $,
			and $a.e.$ on $\GT$,}
		\\
		(\phl,\psl) &\to (\phi_*,\psi_*) &&\quad \hbox{weakly in $\H1 {(\DDBI)'}  $,}
		\\
		\ml & \to \mu_* &&\quad \hbox{weakly in $\L2 {\Hx1}$,}
		\\
		\nl & \to \nu_* &&\quad \hbox{weakly in $\L2{\HxG1}$,}
		\\ 
		\beta \nl - \ml &\to 0 &&\quad \hbox{strongly in $L^2(\GT)$},
	\end{aligned}
	\right.
	\end{align}
	as $L\to 0$. This means that $\mu_*=\beta\nu_*$ $a.e.$ on $\Gamma$.
	
	Moreover, the triplet $(\phi_*,\psi_*,\mu_*)$ is the unique weak solution of the system \eqref{CH:DIR} in the sense of Definition~\ref{DEF:WP:DIR}. In particular, this comprises that $(\phi_*,\psi_*) \in C^{0,\frac 1 2}([0,T];(\DDBI)')$.
	
	\item[\textnormal{(b)}] Let us additionally assume that \eqref{ass:dom:strong}--\eqref{ass:pen:strong} hold and that $(\phi_0,\psi_0)\in \HH^2_{\beta,m}$. Then $(\phi^L,\psi^L,\mu^L,\nu^L)$ is a strong solution of the system \eqref{CH:ROB}, and it holds in addition to \eqref{CONV:0} that
	\begin{align}
	\label{CONV:1}
	\left\{\;
	\begin{aligned}
	(\phl,\psl) &\to (\phi_*,\psi_*) &&\quad \hbox{weakly in $\H1 {\LL^2}  $,}
	\\
	(\ml,\nl) & \to (\mu_*,\nu_*) &&\quad \hbox{weakly-$^*$ in $\L\infty {\HH^1}$}
	\end{aligned}
	\right.
	\end{align}
	as $L\to 0$.
	Moreover, it holds that $(\mu_*,\nu_*) \in \L2 {\HH^2}$ and thus, 
	recalling that ${\mu_*} = \beta \nu_*$, the triplet $(\phi_*,\psi_*,\mu_*)$ is the unique strong solution to the system \eqref{CH:DIR}.
	\end{enumerate}
\end{thm}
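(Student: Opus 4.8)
The plan is to run a compactness-and-passage-to-the-limit argument based on the $L$-uniform bounds \eqref{UNI:1}--\eqref{UNI:5}, closely mirroring Steps~5--7 of the proof of Theorem~\ref{THM:WP:ROB}; the one genuinely new ingredient is that the singular coupling terms are eliminated by testing against $\DDBI$, which is where the hypothesis $\beta>0$ enters.

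For part~(a), I would first apply the Banach--Alaoglu theorem to \eqref{UNI:1}, \eqref{UNI:3} and \eqref{UNI:5} to extract a subsequence $L\to 0$ and limits $(\phi_*,\psi_*,\mu_*,\nu_*)$ enjoying the regularity \eqref{REG:LIM:0} together with the weak and weak-$^*$ convergences listed in \eqref{CONV:0}. From \eqref{UNI:2} one has $\norm{\beta\nu^L-\mu^L}_{L^2(\GT)}^2\le CL\to 0$, which gives the last line of \eqref{CONV:0} and, by uniqueness of limits, $\mu_*\vert_\Gamma=\beta\nu_*$; combined with $\nu_*\in L^2(0,T;H^1(\Ga))$ this upgrades $\mu_*$ to $L^2(0,T;\VV^1)$. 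Since $\beta>0$, the bound \eqref{UNI:5} controls $(\delt\phi^L,\delt\psi^L)$ in $L^2(0,T;(\DDBI)')$, while $(\phi^L,\psi^L)$ is bounded in $L^\infty(0,T;L^p(\Om)\times L^q(\Ga))\emb L^\infty(0,T;\LL^2)$; because $\DDBI\emb\emb\LL^2$ dualizes to $\LL^2\emb\emb(\DDBI)'$, an Aubin--Lions--Simon type argument (cf.\ \cite{simon}) as in Step~5 yields strong convergence of $(\phi^L,\psi^L)$ in $C^0([0,T];(\DDBI)')$ and, along a further subsequence, convergence $a.e.$ in $\OT$ and on $\GT$.

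Next I would pass to the limit in \eqref{WF:ROB}. The nonlocal terms converge by Lemma~\ref{LEM:JK}; the terms in $F',G'$ converge via the $a.e.$ convergence, the growth bounds \eqref{ass:growth} and Lebesgue's generalized convergence theorem; the penalty term converges by \eqref{ass:pen}. The singular terms are handled by adding \eqref{WF:ROB:1} tested with $\theta$ to \eqref{WF:ROB:2} tested with $\sigma$, where $(\theta,\sigma)\in\DDBI$ so that $\theta\vert_\Gamma=\beta\sigma$: the two contributions combine to $\tfrac1L\intG(\beta\nu^L-\mu^L)(\beta\sigma-\theta)\dS=0$ and drop out entirely, leaving the bulk-and-surface Dirichlet form; letting $L\to 0$ and using $\mu_*\vert_\Gamma=\beta\nu_*$ produces \eqref{WF:DIR:1}, while \eqref{WF:DIR:2}--\eqref{WF:DIR:3} follow from the limits of \eqref{WF:ROB:3}--\eqref{WF:ROB:4} (the latter multiplied by $\beta$). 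The energy inequality \eqref{energy:DIR} follows by taking $\liminf$ in \eqref{energy:ROB}, discarding the nonnegative $\tfrac1L$-term and using weak lower semicontinuity of $E$ and of the Dirichlet seminorms. Uniqueness (hence convergence of the whole family, and the $C^{0,1/2}$-continuity) is obtained exactly as in Step~7, now working with the operator $\SD$, the inner product $\scp{\cdot}{\cdot}_{0,\beta,*}$ and estimate \eqref{LEM:EST:2} of Lemma~\ref{LEM:NORMS} in place of their $L>0$ analogues, the strong monotonicity \eqref{EST:FG:2} of $F',G'$ and the Lipschitz bound \eqref{ass:pen:uniq} on $B'$ closing the Gronwall argument.

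For part~(b), the strong-solution hypotheses make \eqref{UNI:6} provide $L$-uniform bounds of $(\phi^L,\psi^L)$ in $H^1(0,T;\LL^2)$ and of $(\mu^L,\nu^L)$ in $L^\infty(0,T;\HH^1)$, so Banach--Alaoglu and uniqueness of the limit give \eqref{CONV:1}. I expect the main obstacle to be the $\HH^2$-regularity of $(\mu_*,\nu_*)$: the bound \eqref{UNI:7} is of no use here, since its right-hand side blows up as $L\to 0$. Instead I would read off the regularity from the limit system: $(\phi_*,\psi_*,\mu_*)$ solves the Dirichlet model, so $(\mu_*,\nu_*)$ satisfies, in the weak sense, the bulk--surface elliptic system with Dirichlet coupling $\mu_*\vert_\Gamma=\beta\nu_*$ and right-hand sides $\delt\phi_*,\delt\psi_*\in\LL^2$ coming from \eqref{CONV:1}. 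Applying the elliptic regularity theory for this coupled problem (as used for the operator $\SD$ in \eqref{pre:S:0}, cf.\ \cite{knopf-liu}, together with \cite{taylor}) gives $(\mu_*,\nu_*)\in L^2(0,T;\HH^2)$, and hence, recalling $\mu_*=\beta\nu_*$, that $(\phi_*,\psi_*,\mu_*)$ is the unique strong solution of \eqref{CH:DIR}.
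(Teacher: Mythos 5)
Your proposal is correct and takes essentially the same route as the paper's proof: Banach--Alaoglu compactness from the $L$-uniform bounds \eqref{UNI:1}--\eqref{UNI:3}, \eqref{UNI:5}, elimination of the $\tfrac1L$-terms by testing with pairs $(\theta,\sigma)\in\DDBI$ (which is exactly where $\beta>0$ enters), limit passage in the nonlinearities via Lemma~\ref{LEM:JK}, the growth conditions and \eqref{ass:pen}, uniqueness through the operator $\SD$, the norm $\norm{\cdot}_{0,\beta,*}$ and a Gronwall argument, and a posteriori elliptic regularity on the limit system for part (b) -- including your correct observation that \eqref{UNI:7} is useless as $L\to 0$. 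The only minor discrepancy is organizational: the paper obtains the $C^{0,\frac12}([0,T];(\DDBI)')$-regularity not in the uniqueness step but by directly integrating the limit identity \eqref{WF:DIR:1} in time and applying Cauchy--Schwarz, a routine estimate your plan implicitly contains.
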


\begin{proof} \textit{Proof of assertion \textnormal{(a)}.}
	Let $(L_k)_{k\in\N} \subset (0,1]$ denote an arbitrary sequence satisfying $L_k\to 0$ as $k\to\infty$.
	For any $k\in\N$, let $(\phi^k, \psi^k, \mu^k, \theta^k) = (\phi^{L_k}, \psi^{L_k}, \mu^{L_k}, \theta^{L_k})$ denote the unique weak solution to the system \eqref{CH:ROB} corresponding to the parameter $L_k$.
	Due to the uniform bounds \eqref{UNI:1}--\eqref{UNI:3} and \eqref{UNI:5}, the Banach--Alaoglu theorem directly implies the existence of limit functions $(\phi_*,\psi_*,\mu_*,\nu_*)$ satisfying the regularity condition \eqref{REG:LIM:0} such that the convergence properties \eqref{CONV:0} hold with $L$ replaced by $L_k$ as $k\to\infty$ along a non-relabeled subsequence of $(L_k)_{k\in\N}$. This directly implies the relation $\mu_*=\beta\nu_*$ $a.e.$ on $\GT$.
	
	For $(\theta,\sigma)\in \DDBI$ arbitrary, testing \eqref{WF:ROB:1} with $\theta$ and \eqref{WF:ROB:2} with $\sigma$, and adding the resulting equations yields
	\begin{align*}
		\biginn{(\delt\phi,\delt\psi)}{(\theta,\sigma)}_{\DDBI}
		&= \inn{\delt\phi^k}{\theta}_{H^1(\Om)}
		+ \inn{\delt\psi^k}{\sigma}_{H^1(\Ga)} \\
		&= - \intO \grad\mu^k\cdot\grad\theta \dx
		- \intG \gradg\nu^k\cdot\gradg\sigma \dS.
	\end{align*}
	After passing to the limit $k\to\infty$, we obtain
	\begin{align}
	\label{EQ:DIR:1}
		\biginn{(\delt\phi,\delt\psi)}{(\theta,\sigma)}_{\DDBI} 
		= - \intO \grad\mu_*\cdot\grad\theta \dx
		- \intG \gradg\nu_*\cdot\gradg\sigma \dS,
	\end{align}
	which verifies \eqref{WF:DIR:1}. Let now $s,t\in[0,T]$ be arbitrary. Without loss of generality we assume that $s<t$. Integrating \eqref{EQ:DIR:1} in time from $s$ to $t$ gives
	\begin{align*}
	&\biginn{\big(\phi_*(t)-\phi_*(s)\,{,}\, \psi_*(t)-\psi_*(s)\big)}{(\theta,\sigma)}_{\DDBI} 
		\\[0.5ex]
	&\quad = \inn{\phi_*(t)-\phi_*(s)}{\theta}_{H^1(\Om)}
	+ \inn{\psi_*(t)-\psi_*(s)}{\sigma}_{H^1(\Ga)} \\[0.5ex]
	&\quad = - \int_s^t \intO \grad\mu_*\cdot\grad\theta \dx\dr
	- \int_s^t \intG \gradg\nu_*\cdot\gradg\sigma \dS\dr \\
	&\quad \le C \abs{t-s}^{1/2} \norm{(\theta,\sigma)}_{\DDBI} 
		\int_0^T \norm{\grad\mu_*(r)}_{L^2(\Om)}^2 + \norm{\gradg\nu_*(r)}_{L^2(\Ga)}^2 \dr.
	\end{align*}
	This proves that $(\phi_*,\psi_*)\in C^{0,\frac 1 2}([0,T];(\DDBI)')$ and hence, the triplet $(\phi_*,\psi_*,\mu_*)$ satisfies the regularity condition \eqref{REG:DIR}. Proceeding as in Step 6 of the proof of Theorem~\ref{THM:WP:ROB}, we conclude that $(\phi_*,\psi_*,\mu_*)$ also satisfies the weak formulations \eqref{WF:DIR:2} and \eqref{WF:DIR:3}, the mass conservation law \eqref{MASS:DIR} and the energy inequality \eqref{energy:DIR}. This means that $(\phi_*,\psi_*,\mu_*)$ is a weak solution to the system \eqref{CH:DIR}. 
	
	We next show that $(\phi_*,\psi_*,\mu_*)$ is the only weak solution to \ref{THM:WP:ROB}. To this end, we assume that there exists another weak solution $(\phi_{**},\psi_{**},\mu_{**})$ to the system \ref{THM:WP:ROB} and we write
	\begin{align*}
		(\phi,\psi,\mu) := (\phi_*,\psi_*,\mu_*) - (\phi_{**},\psi_{**},\mu_{**})
	\end{align*}
	to denote their difference. Plugging an arbitrary pair of test functions $(\theta,\sigma)\in\DDBI$ into \eqref{EQ:DIR:1} and integrating in time from $0$ to $t$ yields
	\begin{align}
	\label{EQ:UNIQ:1}
	\begin{aligned}
	\int_0^t\biginn{(\phi,\psi)}{(\theta,\sigma)}_{\DDBI} \dt 
	= - \intO \grad \left(\int_0^{t} \mu \ds\right)\cdot\grad\theta \dx
		+ \intG \gradg \left(\int_0^{t} \nu \ds\right) \cdot\gradg\sigma \dS.
	\end{aligned}
	\end{align}
	By the definition of $\SD$ in \eqref{pre:S}, we obtain the relations
	\begin{align*}
		\SD_\Om(\delt\phi) = \mu + c\beta,\quad \SD_\Om(\delt\psi) = \nu + c,\quad \SD_\Om(\phi) = \int_0^{t} \mu \ds + \beta ct, \quad \SD_\Ga(\phi) = \int_0^{t} \nu \ds + ct
	\end{align*}
	for all $t\in[0,T]$ and some constant $c\in\R$. 
	Recall that the assumption $\beta>0$ ensures that $\norm{\cdot}^2_{0,\beta,*}$ actually defines a norm on the space $\DD_{\beta}^{-1}$ (see \eqref{pre:S:0}). 
	A straightforward computation now reveals that, for any arbitrary $t_0 \in [0,T],$
	\begin{align*}
	\frac 12 \norm{(\phi,\psi)(t_0)}^2_{0,\beta,*}
	=  -\intOto \phi \mu \dx\dt  -	\intGto \psi \nu \dS\dt.
	\end{align*}
	As the weak formulations \eqref{WF:DIR:2} and \eqref{WF:DIR:3} (with $\mu\vert_\GT$ replaced by $\beta\nu$) are identical to \eqref{WF:ROB:3} and \eqref{WF:ROB:4}, we can proceed exactly as in the proof of Theorem~\ref{THM:WP:ROB} to conclude that
	\begin{align*}
		\norm{\SD (\phi,\psi)(t)}_{0,\beta} = \norm{(\phi,\psi)(t)}_{0,\beta,*} = 0 \quad \hbox{for almost all $t \in [0,T]$}.
	\end{align*}
	It follows that $\phi=0$ $\,a.e.$ in $\OT$ and $\psi=0$ $\,a.e.$ on $\GT$.
	Now, the identities $\mu=0$ $\,a.e.$ in $\OT$ and $\nu=0$ $\,a.e.$ on $\GT$ follow from \eqref{WF:DIR:2} and \eqref{WF:DIR:3} by a standard comparison argument.
	
	The uniqueness of the limit $(\phi_*,\psi_*,\mu_*)$ finally implies that the convergences established above do not depend on the extraction of the subsequence. Hence, the convergence results hold true for the whole sequence $(L_k)_{k\in\N}$. This means that \eqref{CONV:0} is established.
	
	\textit{Proof of assertion \textnormal{(b)}.} Due to the uniform bound \eqref{UNI:6} and the uniqueness of the limit $(\phi_*,\psi_*,\mu_*)$, the convergence property \eqref{CONV:1} follows directly by means of the Banach--Alaoglu theorem. Proceeding similarly as in the proof of Theorem~\ref{THM:STRONG:ROB}, we can use elliptic regularity theory to conclude a posteriori that $(\mu_*,\nu_*) \in \L2 {\HH^2}$. Consequently, the triplet $(\phi_*,\psi_*,\mu_*)$ is a strong solution of the system \eqref{CH:DIR}. Thus, the proof is complete.	
\end{proof}

\subsection{The singular limit $L\to \infty$ and well-posedness of the decoupled model}
\begin{thm}[The limit $L\to \infty$ and well-posedness of \eqref{CH:DEC:BULK} and \eqref{CH:DEC:SURF}] \label{THM:Ltoinfty}
	Let $T,L>0$, $m\in\R$ and $\beta\neq 0$ be arbitrary and suppose that the conditions {\eqref{ass:dom}--\eqref{ass:pen}} hold. 
	For any initial datum $(\phi_0,\psi_0)\in \HHBMo$
	satisfying $F(\cdot,\phi_0)\in L^1(\Omega)$ and $G(\cdot,\psi_0), B(\cdot,\psi_0)\in L^1(\Gamma)$, let
	$(\phi^L,\psi^L,\mu^L,\nu^L)$ denote the corresponding unique weak solution to the system \eqref{CH:ROB} in the sense of Theorem~\ref{THM:WP:ROB}. 
Then the following holds:
	\begin{enumerate}
	\item[\textnormal{(a)}]	
	There exist functions $(\phi^*,\psi^*,\mu^*,\nu^*)$ satisfying 
	\begin{align}
	\label{REG:LIM:INF}
	\left\{
	\begin{aligned}
	\phi^* &\in 
	\H1 {H^1(\Om)'} \cap \L\infty {\Lx p}, 
	\\
	\psi^* &\in 
	\H1 {H^1(\Ga)'} \cap \L\infty {\LxG q}, \\
	\mu^* & \in \L2 {\Hx1}, \quad 
	\nu^* \in \L2 {\HxG1}
	\end{aligned}
	\right.
	\end{align}
	and
	\begin{align}
	\label{MASS:HNC}
	\meano{\phi^*(t)} = \meano{\phi_0} 
	\quad\text{and}\quad
	\meang{\psi^*(t)} = \meang{\psi_0} 
	\quad\text{for almost all}\; t\in [0,T]
	\end{align}
	such that
	\begin{align}
	\label{CONV:INF}
	\left\{\;
	\begin{aligned}
	\phl & \to \phi^* &&\quad \hbox{weakly-$^*$ in $\L\infty {\Lx p}$,} \\
	&&&\qquad  \hbox{weakly in $\H1 {H^1(\Om)'}$, and $a.e.$ in $\OT$,}
	\\
	\psl & \to \psi^* &&\quad \hbox{weakly-$^*$ in $\L\infty {\LxG q}$,} \\
	&&&\qquad \hbox{weakly in $\H1{H^1(\Ga)'} $, and $a.e.$ on $\GT$,}
	\\
	\ml & \to \mu^* &&\quad \hbox{weakly in $\L2{\Hx1}$,}
	\\
	\nl & \to \nu^* &&\quad \hbox{weakly in $\L2 {\HxG1}$,}
	\\ 
	\tfrac 1L (\beta \nl - \ml) &\to 0 &&\quad \hbox{strongly in $L^2(\GT)$}
	\end{aligned}
	\right.
	\end{align}
	as $L\to\infty$. In addition, it holds that
	\begin{align}
	\label{REG:LIM:INF:2}
		\phi^*\in C^{0,\frac 1 2}([0,T]; {H^1(\Om)'}),
		\quad
		\psi^*\in C^{0,\frac 1 2}([0,T];H^1(\Ga)')
	\end{align}
	and the pair $(\phi^*,\mu^*)$ is the unique weak solution of the system \eqref{CH:DEC:BULK} whereas the pair $(\psi^*,\nu^*)$ is the unique weak solution of the system \eqref{CH:DEC:SURF} in the sense of Definition~\ref{DEF:LTOINFTY}. 
	\item[\textnormal{(b)}]
	Let us additionally assume that \eqref{ass:dom:strong}--\eqref{ass:pen:strong} hold and that $(\phi_0,\psi_0)\in \HH^2_{\beta,m}\,$. Then $(\phi^L,\psi^L,\mu^L,\nu^L)$ is a strong solution of the system \eqref{CH:ROB}, and it holds in addition to \eqref{CONV:INF} that
	\begin{align}
	\label{CONV:2}
	\left\{\;
	\begin{aligned}
	(\phl,\psl) &\to (\phi^*,\psi^*) &&\quad \hbox{weakly in $\H1 {\LL^2}  $,}
	\\
	(\ml,\nl) & \to (\mu^*,\nu^*) &&\quad \hbox{weakly-$^*$ in $\L\infty {\HH^1}\cap \L2 {\HH^2}$,}
	\\ 
	\deln \ml &\to 0  &&\quad \hbox{strongly in $L^2(\GT)$.}
	\end{aligned}
	\right.
	\end{align}
	as $L\to \infty$.
	Moreover, it follows that $\partial_{\bold n} \mu^*=0$
	$a.e.$ on $\GT$
	and thus, the pairs $(\phi^*,\mu^*)$ and $(\psi^*,\nu^*)$ are the unique strong solutions to the systems \eqref{CH:DEC:BULK} and \eqref{CH:DEC:SURF}, respectively.
\end{enumerate}
\end{thm}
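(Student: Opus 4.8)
The plan is to mirror the strategy used for the limit $L\to 0$ in Theorem~\ref{THM:Ltozero}, the essential new feature being that the vanishing of the boundary coupling term now \emph{decouples} the bulk and surface subsystems. First I would fix an arbitrary sequence $L_k\to\infty$ with $L_k\ge 1$ and write $(\phl,\psl,\ml,\nl)$ for the corresponding solutions. Since $1+\tfrac{1}{\sqrt{L_k}}$ stays bounded as $L_k\to\infty$, the uniform bounds \eqref{UNI:1}, \eqref{UNI:3} and \eqref{UNI:4} are uniform in $k$, so the Banach--Alaoglu theorem yields limit functions $(\phi^*,\psi^*,\mu^*,\nu^*)$ with the regularity \eqref{REG:LIM:INF}, giving the weak/weak-$*$ convergences in \eqref{CONV:INF} along a non-relabeled subsequence. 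The almost everywhere convergence in $\OT$ and on $\GT$ follows from the compactness argument of Step~5 of the proof of Theorem~\ref{THM:WP:ROB} (Aubin--Lions--Simon with the compact embeddings $L^2(\Om)\emb\emb H^1(\Om)'$ and $L^2(\Ga)\emb\emb H^1(\Ga)'$). The crucial observation is that \eqref{UNI:2} gives $\tfrac1L\norm{\beta\nl-\ml}_{L^2(\GT)}^2\le C$, whence $\norm{\tfrac1L(\beta\nl-\ml)}_{L^2(\GT)}^2\le C/L\to0$, which is the last line of \eqref{CONV:INF}.

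Next I would pass to the limit in the weak formulation \eqref{WF:ROB:1}--\eqref{WF:ROB:4}. Because $\tfrac1L(\beta\nl-\ml)\to0$ strongly in $L^2(\GT)$, the boundary integrals in \eqref{WF:ROB:1} and \eqref{WF:ROB:2} vanish in the limit, and this is precisely what splits the limit into the bulk equation \eqref{WF:NEU:BULK:1} for $(\phi^*,\mu^*)$ and the surface equation \eqref{WF:NEU:SUR:1} for $(\psi^*,\nu^*)$. The convolution terms are handled by Lemma~\ref{LEM:JK}, the terms in $F'$ and $G'$ by the growth bounds \eqref{ass:growth} and Lebesgue's generalized convergence theorem, and the term in $B'$ by assumption \eqref{ass:pen}, exactly as in Step~6 of the proof of Theorem~\ref{THM:WP:ROB}; this yields \eqref{WF:NEU:BULK:2} and \eqref{WF:NEU:SUR:2}. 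Integrating \eqref{WF:NEU:BULK:1} and \eqref{WF:NEU:SUR:1} in time and estimating via Cauchy--Schwarz gives the Hölder continuity \eqref{REG:LIM:INF:2}, while testing with the constants $\theta\equiv1$ and $\sigma\equiv1$ produces the separate mass conservation \eqref{MASS:HNC}.

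The step I expect to be the main obstacle is the pair of \emph{separate} energy inequalities \eqref{energy:NEU:BULK} and \eqref{energy:NEU:SURF}. The inequality \eqref{energy:ROB} inherited from the Robin model couples bulk and surface dissipation through $\tfrac1L\norm{\beta\nl-\ml}^2$, and taking the limes inferior only yields a single \emph{combined} inequality, which is strictly weaker than the two required ones. To overcome this I would argue directly on the decoupled limit equations: test \eqref{WF:NEU:BULK:1} with $\mu^*\in\L2{\Hx1}$ and \eqref{WF:NEU:SUR:1} with $\nu^*\in\L2{\HxG1}$ (no boundary coupling term appears, since these Neumann weak forms are already decoupled), and invoke the chain-rule identity $\int_0^t\inn{\delt\phi^*}{\mu^*}_{H^1(\Om)}=E_\textnormal{bulk}(\phi^*(t))-E_\textnormal{bulk}(\phi_0)$ together with its surface analogue for $E_\textnormal{surf}+E_\textnormal{pen}$. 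These identities are justified because $F,G$ are uniformly convex by \eqref{EST:FG:2}, the quadratic nonlocal parts are smooth, $B'$ is Lipschitz by \eqref{ass:pen:uniq}, and $F'(\phi^*)=\mu^*+J\ast\phi^*\in\L2{\Hx1}$ while $G'(\psi^*)+B'(\psi^*)=\nu^*+K\cg\psi^*\in\L2{\HxG1}$ by Lemma~\ref{LEM:JK:UNIQ}. They in fact deliver the energy \emph{equalities}, which imply \eqref{energy:NEU:BULK} and \eqref{energy:NEU:SURF}. (Alternatively, one could establish these inequalities by a minimizing-movements scheme for each decoupled subsystem and then invoke uniqueness.)

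Finally, uniqueness of each subsystem follows by a Gronwall argument as in Step~7 of the proof of Theorem~\ref{THM:WP:ROB}, but now using the pure-bulk and pure-surface inner products $\scp{\cdot}{\cdot}_{\Om,*}$ and $\scp{\cdot}{\cdot}_{\Ga,*}$ from \eqref{pre:N}: testing the difference of two bulk solutions and exploiting the monotonicity $F''\ge c_*$ with Lemma~\ref{LEM:JK:UNIQ} closes the bulk estimate, while the surface estimate additionally uses $\LIP<c_\cg$ from \eqref{ass:pen:uniq}. Uniqueness upgrades the subsequential convergence to convergence of the whole family as $L\to\infty$. For part~(b), the bounds \eqref{UNI:6} and \eqref{UNI:7}—again uniform as $L\to\infty$—and the Banach--Alaoglu theorem give \eqref{CONV:2}; the strong Robin condition \eqref{CH:ROB:BC} reads $\deln\ml=\tfrac1L(\beta\nl-\ml)$, so $\deln\ml\to0$ in $L^2(\GT)$ and thus $\deln\mu^*=0$ a.e.\ on $\GT$, while elliptic regularity (as in the proof of Theorem~\ref{THM:STRONG:ROB}) upgrades $(\mu^*,\nu^*)$ to $\L2{\HH^2}$, so that $(\phi^*,\mu^*)$ and $(\psi^*,\nu^*)$ are strong solutions.
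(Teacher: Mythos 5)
Your proposal tracks the paper's own proof in essentially every main step: the choice of a sequence $L_k\ge 1$ tending to infinity, the observation that the bounds \eqref{UNI:1}--\eqref{UNI:4} are then uniform in $k$, the extraction of limits via the Banach--Alaoglu theorem together with the compactness machinery of Step~5 of Theorem~\ref{THM:WP:ROB}, the estimate $\|\tfrac1{L}(\beta\nl-\ml)\|_{L^2(\GT)}^2\le C/L$ for the last line of \eqref{CONV:INF}, the limit passage in \eqref{WF:ROB} via Lemma~\ref{LEM:JK}, the growth conditions and a.e.\ convergence, the H\"older-in-time and mass-conservation arguments, the uniqueness proof through the decoupled products of \eqref{pre:N} and Gronwall's lemma, and, for part (b), the uniform bounds \eqref{UNI:6}--\eqref{UNI:7} together with the identity $\deln\ml=\tfrac1L(\beta\nl-\ml)$.

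The one genuine divergence is your treatment of the separate energy inequalities \eqref{energy:NEU:BULK}--\eqref{energy:NEU:SURF}, and there your criticism of the naive argument is accurate. The paper disposes of this point in a single sentence (``proceeding as in Step 6 \dots\ the energy inequalities \eqref{energy:NEU} can be verified''), i.e., by weak lower semicontinuity and a limes inferior in \eqref{energy:ROB}; as you observe, this only produces the \emph{sum} of the two desired inequalities, from which the individual ones do not follow, so the decoupling must be obtained by another mechanism. Your remedy---testing \eqref{WF:NEU:BULK:1} with $\mu^*$ and \eqref{WF:NEU:SUR:1} with $\nu^*$ and invoking the chain rule for the convex-plus-quadratic nonlocal energies---is correct and in fact yields energy \emph{equalities}, which are stronger than what Definition~\ref{DEF:LTOINFTY} requires. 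The chain-rule lemma you need (for $\phi^*\in H^1(0,T;H^1(\Omega)')\cap L^\infty(0,T;L^p(\Omega))$ with $F'(\cdot,\phi^*)=\mu^*+J\ast\phi^*\in L^2(0,T;H^1(\Omega))$, and its surface analogue) is standard in the nonlocal Cahn--Hilliard literature, but it should be stated precisely and cited or proved, since the one-line justification via uniform convexity is the only soft spot in your argument. Your parenthetical alternative also closes the gap: because the Gronwall uniqueness proof uses only the weak formulation and regularity, not the energy inequality, one may construct solutions of each decoupled subsystem by separate minimizing-movement schemes (which do satisfy the separate inequalities) and then identify them with $(\phi^*,\mu^*)$ and $(\psi^*,\nu^*)$ by uniqueness. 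Either route makes rigorous a step that the paper merely asserts.
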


\begin{proof}
\textit{Proof of assertion \textnormal{(a)}.}
	Let $(L_k)_{k\in\N} \subset [1,\infty)$ denote an arbitrary sequence satisfying $L_k\to \infty$ as $k\to\infty$.
	For any $k\in\N$, let $(\phi^k, \psi^k, \mu^k, \nu^k) = (\phi^{L_k}, \psi^{L_k}, \mu^{L_k}, \nu^{L_k})$ denote the unique weak solution to the system \eqref{CH:ROB} corresponding to the parameter $L_k$. 
	Since $L_k\ge 1$, the bounds \eqref{UNI:1}--\eqref{UNI:4} can be made uniform in $k$. Hence, we can apply the Banach--Alaoglu theorem to infer the existence of functions $(\phi^*,\psi^*,\mu^*,\nu^*)$ satisfying \eqref{REG:LIM:INF} such that the first four convergence properties in \eqref{CONV:INF} (with $L$ replaced by $L_k$) hold up to subsequence extraction. 
	We further notice that the last convergence of \eqref{CONV:INF} directly follows from \eqref{UNI:2} since, as $k\to\infty$, we have
	\begin{align*}
	\Big\|\frac 1{L_k}(\beta\mu^k-\nu^k)\Big\|_{L^2(\Ga)}^2
	=\frac{1}{{L_k^2}} \norm{\beta\mu^k-\nu^k}_{L^2(\Ga)}^2
		\le \frac C {L_k} \to 0.
	\end{align*}
	The property \eqref{REG:LIM:INF:2} can be established in the same fashion as the corresponding result in Theorem~\ref{THM:Ltozero}.
	As $F'$, $G'$ and $B'$ are continuous in their second argument, we infer that, as $k\to \infty$,
	\begin{align*}
	\begin{aligned}
		&F'(\cdot,\phi^k) \to F'(\cdot,\phi) \quad && a.e.\;\text{in}\; \OT, \\
		&G'(\cdot,\psi^k) \to G'(\cdot,\psi),\; B'(\cdot,\psi^k) \to B'(\cdot,\psi) \quad && a.e.\;\text{on}\; \GT.
	\end{aligned}
	\end{align*}
	Along with Lemma~\ref{LEM:JK}, this is enough to pass to the limit as $k\to\infty$ in 
	the weak formulation \eqref{WF:ROB} written for $(\phi^k, \psi^k, \mu^k, \nu^k)$ from which we conclude that the weak formulations \eqref{WF:NEU:BULK} and \eqref{WF:NEU:SUR} are satisfied.
	This implies that $(\phi^*,\psi^*,\mu^*,\nu^*)$ also satisfies the mass conservation laws \eqref{MASS:HNC}. Moreover, proceeding as in Step 6 of the proof of Theorem \ref{THM:WP:ROB} the energy inequalities \eqref{energy:NEU} can be verified.
	Hence, the pairs $(\phi,\mu)$ and $(\psi,\nu)$ are weak solutions to the systems \eqref{CH:DEC:BULK} and \eqref{CH:DEC:SURF}, respectively, in the sense of Definition~\ref{DEF:LTOINFTY}.
	
	It remains to prove uniqueness of these weak solutions. To this end, we assume that $(\phi^{**},\mu^{**})$ and $(\psi^{**},\nu^{**})$ are also weak solutions of \eqref{CH:DEC:BULK} and \eqref{CH:DEC:SURF}, respectively. We set
	\begin{align*}
		(\phi,\mu):=(\phi^*,\mu^*)-(\phi^{**},\mu^{**}),\quad
		(\psi,\nu):=(\psi^*,\nu^*)-(\psi^{**},\nu^{**}).
	\end{align*}
	Recalling \eqref{pre:N}, we deduce from \eqref{CH:DEC:BULK:1} that
	\begin{align*}
		\NN_\Om(\delt\phi) = \mu - \meano{\mu}
		\quad\text{and}\quad
		\NN_\Om(\phi) = \int_0^t \mu \ds - t \meano{\mu}.
	\end{align*}
	For any arbitrary $t_0\in[0,T]$, a straightforward computation gives
	\begin{align*}
		&\frac 12 \norm{\phi(t_0)}_{\Om,*}^2  = - \intOto \mu\phi \dx\dt \\
		&\quad = \intOto (J\ast\phi) \phi \dx\dt - \intOto \big( F'(\cdot,\phi^{*}) - F'(\cdot,\phi^{**}) \big)\phi \dx\dt.
	\end{align*}
	Proceeding similarly as in Step 7 of the proof of Theorem~\ref{THM:WP:ROB}, we can use the monotonicity of $F'$ and a Gronwall argument to conclude that
	\begin{align*}
		\big\|\grad \NN_\Om\big(\phi(t_0)\big)\big\|_{L^2(\Om)}
		= \norm{\phi(t_0)}_{\Om,*}
		= 0,
	\end{align*}
	which, due to the arbitrariness of $t_0$, directly implies that $\phi = 0$ $a.e.$ in $\OT$. Finally, the identity $\mu=0$ $\,a.e.$ in $\OT$ follows by comparison. This proves the uniqueness of the solution $(\phi^*,\mu^*)$. Moreover, the uniqueness of the solution $(\psi^*,\nu^*)$ can be established in a similar manner.

	In particular, this implies that the limit $(\phi^*,\psi^*,\mu^*,\nu^*)$ is unique and consequently, the convergences established above do not depend on the subsequence extraction. Hence, the convergence properties remain true for the whole sequence $(L_k)_{k\in\N} \subset [1,\infty)$.
	
	\textit{Proof of assertion \textnormal{(b)}.}	
	Arguing as above, it suffices to realize that the estimates
	\eqref{UNI:6}--\eqref{UNI:7}, which can now be established due to the enhanced regularity of the initial data,
	can be made uniform in $k$ as well.
	Moreover, by substituting the identity $L_k\deln\mu^k = \beta\nu^k-\mu^k$ $a.e.$ on $\GT$ into \eqref{UNI:2}, we get
		\begin{align*}
			\norm{\deln\mu^k}_{{L^2}(\Ga)}^2 
			= \frac{1}{{L_k^2}} \norm{{L_k}\deln\mu^k}_{L^2(\Ga)}^2
			= \frac{1}{{L_k^2}} \norm{\beta\mu^k-\nu^k}_{L^2(\Ga)}^2
			\le \frac C {L_k} \to 0
		\end{align*}
	as $k\to\infty$. This directly implies that $\deln\mu^*= 0$ $a.e.$ on $\GT$. 
	Hence, invoking once more the Banach--Alaoglu theorem, we infer the functions $(\phi^*,\psi^*,\mu^*,\nu^*)$ satisfy \eqref{CONV:2}. 
	Then, in light of these stronger convergences and Lemma~\ref{LEM:JK}, it is possible to pass to the limit as $k\to \infty$
	in the strong formulation \eqref{CH:ROB} written for $(\phi^k,\mu^k,\psi^k,\nu^k)$
	to conclude that $(\phi^*,\psi^*,\mu^*,\nu^*)$ is the strong solution. 
	Thus, the proof is complete.
\end{proof}

\pagebreak[2]

\appendix
\section{Appendix}

\begin{proof}[Proof of Lemma~\ref{LEM:JK:UNIQ}] 
	Let $\phi\in L^2(\Om)$ and $\psi\in L^2(\Ga)$ be arbitrary. 
	
	\subparagraph{Proof of \textnormal{(a)}.}
	We define the trivial extension of $\phi$ on $\R^d$ by
	\begin{align*}
	\ov\phi(x) := 
	\begin{cases}
	\phi(x) &\text{if}\; x\in\Omega,\\
	0 &\text{if}\; x\notin\Omega,
	\end{cases}
	\end{align*} 
	where $\phi$ is to be interpreted as an arbitrary but fixed representative of its equivalence class. Let now $\alpha\in\N_0^3$ be an arbitrary multi-index with $\abs{\alpha}\le 1$ and let $\partial^\alpha J$ denote the corresponding derivative. 
	Applying Young's inequality for convolutions (see, e.g., \cite[Thm.~4.2]{lieb-loss}), we obtain
	\begin{align*}
	\norm{\partial^\alpha J\ast\phi}_{L^2(\Om)} 
	\le \norm{\partial^\alpha J\ast\ov\phi}_{L^2(\R^d)} 
	\le C \norm{J}_{W^{1,1}(\R^d)}\, \norm{\ov\phi}_{L^2(\R^d)}
	= C \norm{J}_{W^{1,1}(\R^d)}\, \norm{\phi}_{L^2(\Om)}
	\end{align*} 
	for a constant $C>0$ depending only on $d$. This proves (a). 
	
	\subparagraph{Proof of \textnormal{(b)}.} To prove (b) we proceed as in \cite[Proof of Thm.~4.2]{lieb-loss}. Let $r>1$ be the exponent from \eqref{ass:kernel} and let $\xi\in L^2(\Ga)$ be arbitrary. We now set 
	\begin{align*}
		1<s := \frac{2r'}{r'+1} \le 2,\qquad \text{i.e.}, \;\; \frac 1s + \frac 1s + \frac 1r = 2.
	\end{align*}
	Moreover, recalling the definition of $r'$, we notice that
	\begin{align*}
		r'=\frac {r}{r-1}, \quad s'= \frac {2r'}{r'-1}= 2r.
	\end{align*}	
	Next, we define the functions $f,g,h:\Ga\times\Ga \to \R$ by
	\begin{align*}
	\begin{aligned}
		f(y,z) &:= \psi_+(z)^{s/s'} K(z-y)^{r/s'},\\
		g(y,z) &:= K(z-y)^{r/s'} \xi_+(y)^{s/s'}, \\
		h(y,z) &:= \psi_+(z)^{s/r'} \xi_+(y)^{s/r'}
	\end{aligned}
	\end{align*}
	where $\psi$, $K$ and $\xi$ are to be interpreted as arbitrary but fixed representative of their equivalence class and $\psi_+$ and $\xi_+$ denote the positive parts of these functions. Using the continuous embedding $W^{1,r}(\Om) \emb L^r(\Ga)$ and a change of variables, we obtain
	\begin{align*}
		\norm{K(z-\cdot)}_{L^r(\Ga)} 
			\le \norm{K(z-\cdot)}_{W^{1,r}(\Om)}
			\le \norm{K(z-\cdot)}_{W^{1,r}(\R^d)}
			= \norm{K}_{W^{1,r}(\R^d)}
	\end{align*}
	for almost all $z\in\Ga$.
	Hence, applying H\"older's inequality, we get
	\begin{align*}
		\norm{f}_{L^{s'}(\Ga\times\Ga)} &= \left( \intG \psi_+(z)^s \intG  K(z-y)^r \dS(y)\dS(z) \right)^{\frac{1}{s'}} \\
		& \le \left( \intG \psi_+(z)^s \, \norm{K}_{W^{1,r}(\R^d)}^r \dS(z) \right)^{\frac{1}{s'}}
			\le \norm{\psi}_{L^s(\Ga)}^{s/s'}  \norm{K}_{W^{1,r}(\R^d)}^{r/s'}.
	\end{align*}
	For $g$ and $h$ we derive the analogous estimates
	\begin{align*}
		\norm{g}_{L^{s'}(\Ga\times\Ga)} \le \norm{\xi}_{L^s(\Ga)}^{s/s'}  \norm{K}_{W^{1,r}(\R^d)}^{r/s'},
		\qquad
		\norm{h}_{L^{r'}(\Ga\times\Ga)} \le \norm{\psi}_{L^s(\Ga)}^{s/r'} \norm{\xi}_{L^s(\Ga)}^{s/r'}.
	\end{align*}
	Now, using H\"older's inequality along with the above estimates, we infer that	 
	\begin{align*}
		&\intG\intG f(y,z) g(y,z) h(y,z) \dS(y)\dS(z) =\intG\intG \psi_+(z) K(z-y) \xi_+(y) \dS(y)\dS(z)\\[1ex]
		& \quad \le \norm{\psi}_{L^s(\Ga)} \norm{K}_{W^{1,r}(\R^d)} \norm{\xi}_{L^s(\Ga)}
			\le \norm{\psi}_{L^2(\Ga)} \norm{K}_{W^{1,r}(\R^d)} \norm{\xi}_{L^2(\Ga)}.
	\end{align*}
	Proceeding analogously with the negative parts $\psi_-$ and $\xi_-$ and combining the estimates, we conclude that
	\begin{align*}
		\intG\intG \psi(z) K(z-y) \xi(y) \dS(y)\dS(z) 
		\le C\, \norm{\psi}_{L^2(\Ga)} \norm{K}_{W^{1,r}(\R^d)} \norm{\xi}_{L^2(\Ga)}.
	\end{align*}
	As $\xi\in L^2(\Ga)$ was arbitrary, this implies that the mapping 
	\begin{align*}
		L^2(\Ga) \ni \xi \mapsto \intG (K\cg\psi)(z) \xi(z) \dS(z) \in\R
	\end{align*}
	defines a bounded linear functional on $L^2(\Ga)$. In particular, since $L^2(\Ga) \cong (L^2(\Ga))'$, it holds that
	$K\cg \psi \in L^2(\Gamma)$ with
	\begin{align*}
		\norm{K\cg \psi}_{L^2(\Ga)} \le C\, \norm{K}_{W^{1,r}(\R^d)} \norm{\psi}_{L^2(\Ga)} .
	\end{align*}
	Proceeding analogously with the components of $\gradg K$ instead of $K$, we finally conclude $(\gradg K)\cg \psi \in L^2(\Gamma)$, and
	\begin{align*}
		\norm{K\cg \psi}_{H^1(\Ga)} \le C\, \norm{K}_{W^{2,r}(\R^d)} \norm{\psi}_{L^2(\Ga)} ,
	\end{align*}
	which proves (b).
	
	Thus, the proof is complete.
\end{proof}

\medskip

\begin{proof}[Proof of Lemma~\ref{LEM:JK}]
	We first deduce from Remark~\ref{rem:JK}(b) that there exist functions $\bar J \in W^{1,1}(\Om)$ and $\bar K \in W^{1,1}(\Ga)$ such that
	\begin{align*}
	(J\ast\phi_k) \wto \bar J \;\;\text{in}\; W^{1,1}(\Om)
	\quad\text{and}\quad
	(K\cg\psi_k) \wto \bar K \;\;\text{in}\; W^{1,1}(\Ga)
	\end{align*}
	after extraction of a subsequence. Since $p$ and $q$ satisfy \eqref{ASS:PQ}, the embeddings $W^{1,1}(\Om)\emb L^{p'}(\Om)$ and $W^{1,1}(\Ga)\emb L^{q'}(\Ga)$ are compact. Hence, it holds that
	\begin{align}
	\label{CONV:JK}
	(J\ast\phi_k) \to \bar J \;\;\text{in}\; L^{p'}(\Om)
	\quad\text{and}\quad
	(K\cg\psi_k) \to \bar K \;\;\text{in}\; L^{q'}(\Ga)
	\end{align}
	after another subsequence extraction. For arbitrary test functions $\zeta\in L^{p}(\Om)$ and $\xi\in L^{q}(\Ga)$ we now consider the following linear functionals:
	\begin{alignat*}{2}
	&\mathcal{J_\zeta}:L^p(\Omega)\to \R,\quad &&\phi \mapsto \intO (J\ast\phi) \zeta \dx,\\
	&\mathcal{K_\xi}:L^q(\Gamma)\to \R,\quad &&\psi \mapsto \intG (K\cg\psi) \xi \dS.
	\end{alignat*}
	Using H\"older's inequality, the continuous embeddings $W^{1,1}(\Om)\emb L^{p'}(\Om)$ and $W^{1,1}(\Ga)\emb L^{q'}(\Ga)$ and the estimates from Remark~\ref{rem:JK}, it is straightforward to check that both functionals are continuous.
	Hence, on the one hand, the weak convergence of $(\phi_k)_{k\in\N}$ in $L^p(\Om)$ and the weak convergence of $(\psi_k)_{k\in\N}$ in $L^q(\Ga)$ imply that
	\begin{align}
	\label{CONV:JK:2}
	\mathcal{J_\zeta}(\phi_k) \to \mathcal{J_\zeta}(\phi) = \intO (J\ast\phi) \zeta \dx
	\quad\text{and}\quad
	\mathcal{K_\xi}(\psi_k) \to \mathcal{K_{\xi}}(\psi) = \intG (K\cg\psi) \xi \dS
	\end{align}
	as $k\to\infty$. On the other hand, it follows from \eqref{CONV:JK} that
	\begin{align}
	\label{CONV:JK:3}
	\mathcal{J_\zeta}(\phi_k) \to \intO \bar J \zeta \dx
	\quad\text{and}\quad
	\mathcal{K_\xi}(\psi_k) \to \intG \bar K \xi \dS
	\end{align}
	as $k\to\infty$. Combining \eqref{CONV:JK:2} and \eqref{CONV:JK:3}, invoking the uniqueness of the limit, and recalling that the test functions $\zeta\in L^{p}(\Om)$ and $\xi\in L^{q}(\Ga)$ were arbitrary, we conclude from the fundamental lemma of calculus of variations that $\bar J = J\ast\phi$ $a.e.$ in $\Om$ and $\bar K = K\cg \psi$ $a.e.$ on $\Ga$. This completes the proof.
\end{proof}

\medskip

\begin{proof}[Proof of Lemma~\ref{LEM:NORMS}]
	In the case $L>0$, let $\bph = (\phi, \psi) \in \HH_{\beta,0}^{-1} $ and $\bet=(\zeta,\xi)\in \HH^1$ be arbitrary. Defining 
	\begin{align*}
		\zeta_0 := \zeta - c\beta,\quad \xi_0:= \xi - c, 
		\quad\text{with}\quad
		c := \frac{\beta\abs{\Om}\meano{\zeta}+\abs{\Ga}\meang{\xi}}{\beta^2\abs{\Om}+\abs{\Ga}}
	\end{align*}
	we see that $\bet_0:=(\zeta_0,\xi_0) \in \HHBI$. We notice that $\inn{\bph}{(\beta c,c)}_{\HH^1} = 0$ due to $\bph \in \HH_{\beta,0}^{-1} $ and thus, 
	\begin{align*}
		\norm{\bet_0}_{L,\beta}^2 
		&= \norm{\grad\zeta}_{L^2(\Om)}^2 + \norm{\gradg\xi}_{L^2(\Ga)}^2 + \frac 1 L \norm{\beta\xi-\zeta}_{L^2(\Ga)}^2
		\leq C \left( 1 + \frac{1}{L} \right) \norm{\bet}_{\HH^1}^2.
	\end{align*}
	Recalling that $\SS(\bph)$ satisfies the weak formulation \eqref{WF:S}, we can use the Cauchy--Schwarz inequality to infer that
	\begin{align*}
	&\Big| 	\inn{\bph}{\bet}_{\HH^1} \Big|
	= \Big| \inn{\bph}{\bet_0}_{\HH^1} \Big|
	= \left| \, \big( \SS(\bph), \bet_0 \big)_{L,\beta} \, \right| 
	\leq 
	\norm{\SS(\bph)}_{L,\beta}\, \norm{\bet_0}_{L,\beta}
	\\[0.5ex]
	&\quad
	\leq 
	C \left( 1 + \frac{1}{\sqrt{L}} \right) \norm{\SS(\bph)}_{L,\beta} \norm{\bet}_{\HH^1}.
	\end{align*}
	Hence, by invoking the definition of the operator norm on $\HH_{\beta,0}^{-1}$, we get
	\begin{align*}
	\norm{\bph}_{(\HH^1)'}
	= \sup_{\norm{\bet}_{\HH^1}=1} \big| \inn{\bph}{\bet}_{\HH^1} \big|
	\le C \left( 1 + \tfrac{1}{\sqrt{L}} \right)  \norm{\SS(\bph)}_{L,\beta} 
	\end{align*}
	and since $\norm{\SS(\bph)}_{L,\beta} = \norm{\bph}_{L,\beta,*}$ this proves \eqref{LEM:EST:1}. The estimate \eqref{LEM:EST:2} can be proved completely analogously and thus, the proof is complete.
\end{proof}

\medskip

\begin{proof}[Proof of Lemma~\ref{LEM:KERNEL}]
	As most of the assertions can be verified straightforwardly, we only sketch the most important steps. 
	First of all, it holds in all cases that $J\in C^1(\R^d\setminus\{0\})$ and $K\in C^2(\R^d\setminus\{0\})$ with 
	\begin{align*}
		J(x) \ge (2R)^{-\omega} \underset{\abs{y}\le 2R}{\min} \; \rho(\abs{y}) > 0
		\quad\text{and}\quad
		K(x) \ge (2R)^{-\gamma} \underset{\abs{y}\le 2R}{\min} \; \sigma(\abs{y}) > 0
	\end{align*}
	for all $x\in\ov{B_{2R}(0)}\setminus\{0\}$. Since $x-y\in \ov{B_{2R}(0)}$ for all $x,y\in\ov\Omega$, this implies that \eqref{DEF:AINF} is satisfied. 
	
	For $K(x) = \sigma(\abs{x})\, \abs{x}^{-\gamma}$, we first compute the partial derivatives $\delxj K$ and $\delxi\delxj K$ for $i,j \in \{1,2,3\}$ on $\R^3\setminus\{0\}$. Then, by transformation into spherical coordinates (the radius being denoted by $s$), a straightforward computation leads to the estimates
	\begin{align*}
		\norm{K}_{L^r(\R^d)}^r &\le C \int_0^\infty \sigma(s)^r s^{2-r\gamma} \ds, \\
		\norm{\delxj K}_{L^r(\R^d)}^r &\le C \int_0^\infty \sigma(s)^r s^{2-r(\gamma+1)} + \abs{\sigma'(s)}^r s^{2-r\gamma} \ds, \\
		\norm{\delxi\delxj K}_{L^r(\R^d)}^r &\le C \int_0^\infty \sigma(s)^r s^{2-r(\gamma+2)} + \abs{\sigma'(s)}^r s^{2-r(\gamma+1)} + \abs{\sigma''(s)}^r s^{2-r\gamma} \ds
	\end{align*}
	for all $i,j\in\{1,2,3\}$. Due to the decay conditions on $\sigma$ the above integrals exist if and only if $d-1-r(\gamma+2)>-1$ which is true since $r<3/(\gamma+2)$. This implies that $K\in W^{2,r}(\R^3)$ and it is now easy to check that $K$ satisfies all relevant conditions in \eqref{ass:kernel}. All remaining assertions can be proved by similar arguments.
\end{proof}

\section*{Acknowledgment}
Andrea Signori gratefully acknowledges financial support from the LIA-COPDESC initiative.
Moreover, both the authors were partially supported by the RTG 2339 ``Interfaces, Complex Structures, and Singular Limits''
of the German Science Foundation (DFG). 
The support is gratefully acknowledged.

The authors also want to thank Helmut Abels, Harald Garcke, Kei Fong Lam, Chun Liu and Stefan Metzger for helpful discussion.

\footnotesize
\bibliographystyle{plain}
\bibliography{KS}

\end{document}